\numberwithin{equation}{section}
\newcommand{\R}{\ensuremath{\mathbb R}}
\newcommand{\from}{\colon}
\newcommand{\id}{\ensuremath{\mathrm{id}}}
\newcommand{\abs}[1]{\ensuremath{\left\lvert #1\right\rvert}}
\newcommand{\cbullet}{\,\raisebox{1pt}{\(\scriptscriptstyle\bullet\)}\,}
\renewcommand{\implies}{\ensuremath{\Rightarrow}}
\renewcommand{\iff}{\ensuremath{\Leftrightarrow}}
\newtheorem{lemma}{Lemma}[section]
\newtheorem{proposition}{Proposition}[section]
\newtheorem{theorem}{Theorem}[section]
\newtheorem{corollary}{Corollary}[section]
\newtheorem{remark}{Remark}[section]
\newtheorem*{claim*}{Claim}
\newtheorem{definition}{Definition}[section]
\newtheorem{theoremA}{Theorem}[section]
\begin{document}

\title{Semialgebraic Lipschitz equivalence of polynomial functions}

\author{Sergio Alvarez}

\date{}

\maketitle

\begin{abstract}
\noindent
We investigate the classification of quasihomogeneous polynomials in two variables with real coefficients under semialgebraic bi-Lipschitz equivalence  in a neighborhood of the origin in \(\R^2\). Building on the work of Birbrair, Fernandes, and Panazzolo,  our approach is based on reducing the problem to the Lipschitz classification of associated single-variable polynomial functions, called height functions. We establish conditions under which semialgebraic bi-Lipschitz equivalence of quasihomogeneous polynomials corresponds to the Lipschitz equivalence of their height functions. To achieve this, we develop the theory of \(\beta\)-transforms and inverse \(\beta\)-transforms. As an application, we examine a family of quasihomogeneous polynomials previously used by Henry and Parusiński to show that the bi-Lipschitz equivalence of analytic function germs \((\R^2,0)\to(\R,0)\) admits continuous moduli. Our results show that semialgebraic bi-Lipschitz equivalence of real quasihomogeneous polynomials in two variables also admits continuous moduli.

\end{abstract}

%------
% INSERT THE BODY OF THE PAPER HERE (except
% acknowledgments, funding info and bibliography)
%------

\section{Introduction}
\label{section: intro}

In \cite{HP1}, Henry and Parusi\'{n}ski showed that the bi-Lipschitz classification of complex analytic function germs admits continuous moduli. This fact had not been observed before and, interestingly, it contrasts with the bi-Lipschitz equivalence of complex analytic set germs, which does not admit moduli (see \cite{M}). The moduli space of bi-Lipschitz equivalence of function germs is not yet fully understood. However, recent progress by Câmara and Ruas has advanced the understanding of the moduli space of bi-Lipschitz equivalence of quasihomogeneous function germs, in the complex case (see \cite{CR}).

In \cite{HP2}, Henry and Parusi\'{n}ski showed that the bi-Lipschitz classification of real analytic function germs admits continuous moduli.  Then, in \cite{BFP}, Birbrair, Fernandes and Panazzolo described the semialgebraic bi-Lipschitz moduli in what they termed ``the simplest possible case'': quasihomogeneous polynomial functions defined on the Hölder triangle 
\(T_\beta \coloneqq \{(x,y)\in\R^2: x\geq 0, \,0\leq y\leq x^\beta\}\). 
Independently, the particular case of weighted homogeneous polynomial functions of two real variables has been considered by Koike and Parusi\'{n}ski in \cite{KP}.

In this paper, we address the problem of classifying 
quasihomogeneous polynomials in two variables with real coefficients modulo semialgebraic bi-Lipschitz equivalence. Our main goal is to extend the results obtained in \cite{BFP} for the classification of germs of functions defined on the Hölder triangle \(T_\beta\) to germs of functions defined on the whole plane.

For simplicity, we restrict our discussion (without loss of generality) to quasihomogeneous polynomials \(F(X,Y)\in\R[X,Y]\) in which the weight of \(Y\) is greater than the weight of \(X\). Such polynomials are precisely those satisfying a relation of the form \(F(tX,t^{\beta}Y) = t^dF(X,Y)\), where \(\beta\) is a rational number greater than \(1\) and \(d\) is a positive rational number. Thus, following \cite{BFP}, we refer to such polynomials as {\it \(\beta\)-quasihomogeneous of degree \(d\)}. The rational number \(d\) is called the {\it \(\beta\)-quasihomogeneous degree of \(F\)}. In this paper, we consider only the case where \(d\) is a positive integer to keep the discussion focused.

Given two \(\beta\)-quasihomogeneous polynomials \(F(X,Y)\) and \(G(X,Y)\) of the same \(\beta\)-quasihomogeneous degree \(d\), we aim to determine whether there exists a germ of semialgebraic bi-Lipschitz homeomorphism \(\Phi\from (\R^2,0)\to(\R^2,0)\) such that \(G\circ\Phi = F\). In the affirmative case, we say that \(F\) and \(G\) are\/ {\it \({\cal R}\)-semialgebraically Lipschitz equivalent}. Following the approach taken in \cite{BFP}, we seek to reduce this problem to determining whether two given polynomial functions of a single variable are Lipschitz equivalent --- we say that two polynomial functions \(f,g\from\R\to\R\) are {\it Lipschitz equivalent}, denoted \(f \cong g\), if there exist a bi-Lipschitz homeomorphism 
\(\phi\from\R\to\R\) and a constant \(c > 0\) such that \(g\circ\phi = c f\).

The Lipschitz classification problem for polynomial functions of a single variable is solved in Section \ref{section: Lipschitz equivalence on a single variable} (see Theorem \ref{thm: no critical points}, Theorem \ref{thm: only one critical point}, and Theorem \ref{thm: at least 2 critical points}). Again, we follow the approach taken in \cite{BFP}, which consists of comparing both the values and the multiplicities of the given polynomial functions at their critical points.
The reduction to the single variable case is carried out in Section \ref{section: R-semialg Lip equiv of beta-qh polynomials}, under fairly general conditions. Still following the methodology of \cite{BFP}, we associate with each 
\(\beta\)-quasihomogeneous polynomial \(F(X,Y)\in\R[X,Y]\) a pair of 
polynomial functions \(f_+,f_-\from\allowbreak{\R\to\R}\), called the {\it height functions} of \(F\). Then, we consider the following questions:
\begin{enumerate}[label=(\Roman*)]
\item
\label{question 1}
Suppose that two given \(\beta\)-quasihomogeneous polynomials 
\(F,G\in\R[X,Y]\) of degree \(d\) are \({\cal R}\)-semialgebraically Lipschitz equivalent. Is it possible to arrange their height functions into pairs of Lipschitz equivalent functions
(i.e. is it true that either \(f_+\cong g_+\) and \(f_-\cong g_-\), or
\(f_+\cong g_-\) and \(f_-\cong g_+\))?
\item 
\label{question 2}
Suppose that the height functions of two given \(\beta\)-quasihomogeneous polynomials \(F,G\in\R[X,Y]\) of degree \(d\) can be arranged into pairs of Lipschitz equivalent functions. Is it true that \(F\) and \(G\)\, are \({\cal R}\)-semialgebraically Lipschitz equivalent?
\end{enumerate}

We establish conditions under which the answer to the first question is affirmative (see Theorem \ref{thm: equivalent polynomials, Lipschitz equivalent height functions}). Additionally, we provide some conditions under which the answer to the second question is affirmative (see Theorem \ref{thm: Sufficient conditions for R-semialg. Lip. equivalence}, Corollary \ref{cor: Sufficient conditions for R-semialg. Lip. equivalence}, and Corollary \ref{cor: r odd, s even}). These are our main results on \({\cal R}\)-semialgebraic Lipschitz equivalence of \(\beta\)-quasihomogeneous polynomials. Together with our results on Lipschitz equivalence of polynomial functions of a single variable (namely, Theorems \ref{thm: no critical points}, \ref{thm: only one critical point}, and \ref{thm: at least 2 critical points}), they allow us to determine, under fairly general conditions, whether two given \(\beta\)-quasihomogeneous polynomials are \({\cal R}\)-semialgebraically Lipschitz equivalent.

In \cite{BFP}, the questions stated above were both answered affirmatively in the case where the equivalence is restricted to the Hölder triangle \(T_\beta\), assuming that the given \(\beta\)-quasihomogeneous polynomials vanish identically on \(\partial T_\beta\) and do not vanish at the interior points of 
\(T_\beta\). Here, building on the methods of Birbrair, Fernandes, and Panazzolo, we develop a broader framework that extends their results on \(\beta\)-quasihomogeneous polynomials to a neighborhood of the origin in the whole plane. This is the theory of \(\beta\)-transforms and inverse \(\beta\)-transforms, introduced in Sections \ref{subsection: beta-isomorphisms and the beta-transform},
\ref{subsection: proto-transitions}, \ref{subsection: beta-transitions and the inverse beta-transform}, and \ref{subsection: shifting from proto to beta}. Our main results on \({\cal R}\)-semialgebraic Lipschitz equivalence of \(\beta\)-quasihomogeneous polynomials in two variables are proved using this theory.

In Section \ref{subsection: Henry and Parusinski's example}, we apply our main results on \({\cal R}\)-semialgebraic Lipschitz equivalence of 
\(\beta\)-quasihomogeneous polynomials in two variables to examine a specific family of quasihomogeneous polynomials, previously used in \cite{HP2} to show that the bi-Lipschitz equivalence of analytic function germs \((\R^2,0)\to(\R,0)\) admits continuous moduli. As an immediate consequence of our results, we establish that the \({\cal R}\)-semialgebraic Lipschitz equivalence of \(\beta\)-quasihomogeneous polynomials in two variables also admits continuous moduli.

\section{Lipschitz equivalence of polynomial functions of a single variable}
\label{section: Lipschitz equivalence on a single variable}

\begin{definition}
Two polynomial functions \(f,g\from\R\to\R\) are {\it Lipschitz equivalent}, denoted \(f \cong g\), if there exist a bi-Lipschitz homeomorphism 
\(\phi\from\R\to\R\) and a constant \(c > 0\) such that \(g\circ\phi = c f\).
\end{definition}

In this section, we show how to determine whether two given polynomial functions \(f,g\from\R\to\R\) are Lipschitz equivalent and, in the case where \(f\) and \(g\) are nonconstant Lipschitz equivalent polynomial functions, we investigate the asymptotic behavior of the function \(\phi\) relating \(f\) and \(g\).

\subsection{Lipschitz classification of polynomial functions of a single variable}
\label{subsection: Lipschitz equivalence of polynomial functions of a single variable}

In this subsection, we provide effective criteria to determine whether any two given polynomial functions \(f,g\from\R\to\R\) are Lipschitz equivalent. The case of constant functions is trivial: a polynomial function \(f\) is Lipschitz equivalent to a constant function \(g\) if and only if \(f\) is constant and has the same sign as \(g\) (positive, negative, or zero). Thus, we focus on nonconstant polynomial functions.  In this case, if \(f\) and \(g\) are Lipschitz equivalent, then the function \(\phi\) that relates \(f\) and \(g\) is necessarily semialgebraic. This follows from the next lemma, which is an adaptation of \cite[Lemma~3.1]{BFP}.

\begin{lemma}
\label{lemma: semialgebraicity of phi}
Let \(f,g\from\R\to\R\) be nonconstant polynomial functions. 
If \(\phi\from\R\to\R\) is a homeomorphism such that \(g\circ\phi = f\), then 
\(\phi\) is semialgebraic.
\end{lemma}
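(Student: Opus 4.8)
The plan is to show that the graph of $\phi$ is a semialgebraic subset of $\R^2$, which is equivalent to $\phi$ being a semialgebraic function. Since $g\circ\phi = f$, for every $x\in\R$ the value $\phi(x)$ lies in the fiber $g^{-1}(f(x))$, which is a finite set (as $g$ is nonconstant, hence proper of finite degree). So the idea is to write the graph of $\phi$ as
\[
\Gamma_\phi = \{(x,y)\in\R^2 : g(y) = f(x)\}\setminus E,
\]
where $E$ is the set of ``spurious'' points $(x,y)$ with $g(y)=f(x)$ but $y\neq\phi(x)$. The set $\{(x,y) : g(y)=f(x)\}$ is plainly algebraic, so everything reduces to showing that $E$ is semialgebraic, or equivalently that $\Gamma_\phi$ is semialgebraic as a subset of this algebraic curve.

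First I would note that $A \coloneqq \{(x,y) : g(y) = f(x)\}$ is a one-dimensional algebraic set with finitely many connected components, each of which is a semialgebraic curve. Over the (finite) set $S$ of critical values of $g$ together with values of $f$ at critical points of $f$, the fiber structure can degenerate; away from a finite set $B\subset\R$ of $x$-coordinates, the projection $A\to\R$, $(x,y)\mapsto x$, is a finite covering, and on each interval of $\R\setminus B$ the ``number of sheets'' is locally constant and each sheet is the graph of a continuous semialgebraic branch $y = \psi_i(x)$. Because $\phi$ is a homeomorphism, $\phi$ is strictly monotone, and on each such interval it must coincide with exactly one of these finitely many branches $\psi_i$; the branch cannot switch within an interval by continuity, so $\Gamma_\phi$ restricted to that interval is semialgebraic. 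This exhibits $\Gamma_\phi$ over $\R\setminus B$ as a finite union of semialgebraic pieces; adding back the finitely many points over $B$ keeps it semialgebraic, and therefore $\phi$ is semialgebraic.

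An alternative, cleaner route — and probably the one I would actually write — avoids any covering-space bookkeeping: since $\phi$ is a homeomorphism of $\R$, it is monotone, and its graph $\Gamma_\phi$ is a connected, closed, unbounded one-dimensional subset of the algebraic curve $A$. One shows $\Gamma_\phi$ is a union of connected components of $A\setminus (A\cap p^{-1}(B))$ together with some of the finitely many points of $A$ lying over $B$, where $p$ is the first projection and $B$ is the finite bad set above; each connected component of a real algebraic set minus finitely many points is semialgebraic, so $\Gamma_\phi$ is a finite union of semialgebraic sets plus finitely many points, hence semialgebraic.

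The main obstacle is the local analysis of $A$ near the ``bad'' fibers — i.e., verifying that outside a finite set $B$ the curve $A$ splits into finitely many continuous semialgebraic branches over each subinterval, and that monotonicity of $\phi$ forces it to agree with a single branch there (so no pathological branch-hopping occurs). This is where one invokes properness of nonconstant polynomials (so all fibers are finite and $A\to\R$ is proper), the finiteness of the critical locus, and the curve-selection/monotonicity properties of semialgebraic sets; none of it is deep, but it is the step that does the real work, and it is exactly the part that [BFP, Lemma 3.1] handles in the restricted setting that we are adapting here.
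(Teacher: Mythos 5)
Your proposal is correct, but it routes through more machinery than the paper does. The paper's proof is a direct, one-step reduction: writing $s_1 < \cdots < s_p$ for the critical points of $g$ and $t_i \coloneqq \phi^{-1}(s_i)$, it observes that $g$ is a monotone bijection on each of the closed intervals cut out by the $s_i$, so on each of the corresponding intervals in the source one has $\phi = \bigl(g|_{[s_i,s_{i+1}]}\bigr)^{-1}\circ f$, which is semialgebraic as a composition of semialgebraic functions. You arrive at the same conclusion by viewing $\Gamma_\phi$ inside the algebraic curve $A = \{(x,y): g(y) = f(x)\}$, deleting a finite bad set $B$ of $x$-values where the projection degenerates (e.g.\ $B = f^{-1}\bigl(g(\mathrm{crit}\,g)\bigr)$), decomposing $A$ over each remaining interval into finitely many disjoint continuous semialgebraic branches, and using connectedness of $\Gamma_{\phi|_I}$ to see it must equal a single branch over each interval $I$. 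The two decompositions are closely related --- the paper's breakpoints $\{t_i\}$ lie in $B$, because $f(t_i) = g(s_i) \in g(\mathrm{crit}\,g)$ --- and both ultimately rest on the same fact, that $g$ is injective between consecutive critical points. What your route buys is a more geometric picture and a formulation that generalizes readily; what it costs is invoking semialgebraic cell decomposition and a connectedness argument where the paper needs only that the inverse of a monotone polynomial branch is semialgebraic. One small point worth being explicit about: in your branch-matching step, only continuity of $\phi$ (not monotonicity) is needed --- $\Gamma_{\phi|_I}$ is connected and contained in a disjoint union of branch graphs, hence in a single one.
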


\begin{proof}
Let \(s_1<\cdots< s_p\) be the critical points of \(g\). Then we have 
\begin{equation*}
\R = (-\infty,s_1]\cup [s_1,s_2]\cup\cdots\cup [s_{p-1},s_p]\cup[s_p,+\infty),
\end{equation*}
and \(g\) is monotone and injective on each of the intervals 
\((-\infty,s_1], [s_1,s_2],\ldots, [s_{p-1},s_p]\), and \([s_p,+\infty)\).
Let \(t_i \coloneqq \phi^{-1}(s_i)\), for \(i = 1,\ldots, p\). 

Suppose that \(\phi\) is an increasing homeomorphism; the case where \(\phi\) is decreasing is analogous. Then we have \(t_1 < \cdots < t_p\). On each of the intervals 
\((-\infty,t_1],[t_1,t_2],\ldots,[t_{p-1},t_p]\), and \([t_p,+\infty)\), we have
\(\phi = g^{-1}\circ f\). Since \(f\) and \(g\) are polynomial functions, it follows that each of the restrictions 
\(\phi |_{(-\infty,t_1]},\allowbreak \phi |_{[t_1,t_2]},\allowbreak \ldots,\allowbreak \phi |_{[t_{p-1},t_p]}\), and \(\phi |_{[t_p,+\infty)}\) is a semialgebraic function. Thus, \(\phi\) is a semialgebraic function.
\end{proof}

\begin{lemma}
\label{lemma: bi-Lipschitz invariance of degree}
Let \(f,g\from\R\to\R\) be nonconstant polynomial functions. If \(f\) and \(g\) are Lipschitz equivalent, then \(\deg f = \deg g\).
\end{lemma}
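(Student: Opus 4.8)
The plan is to show that the degree of a nonconstant polynomial function is detected by its asymptotic growth rate at infinity, which is manifestly preserved under Lipschitz equivalence. Write $n = \deg f$ and $m = \deg g$. Since $f$ and $g$ are Lipschitz equivalent, there are a bi-Lipschitz homeomorphism $\phi\from\R\to\R$ and a constant $c>0$ with $g\circ\phi = cf$. Because $\phi$ is bi-Lipschitz, there exist constants $0 < a \le b$ such that $a\abs{t - t'} \le \abs{\phi(t)-\phi(t')}\le b\abs{t-t'}$ for all $t,t'$; in particular, taking $t' = \phi^{-1}(0)$, we get $a\abs{t} - C_0 \le \abs{\phi(t)} \le b\abs{t} + C_1$ for suitable constants and all $t\in\R$, so $\abs{\phi(t)}$ is comparable to $\abs{t}$ as $\abs{t}\to\infty$.

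Next I would use the standard fact that for a nonconstant polynomial $f$ of degree $n$ with leading coefficient $\lambda\neq 0$, one has $\abs{f(t)} \sim \abs{\lambda}\,\abs{t}^n$ as $\abs{t}\to\infty$; more crudely, there are constants $0 < A \le B$ and $R>0$ with $A\abs{t}^n \le \abs{f(t)} \le B\abs{t}^n$ for $\abs{t}\ge R$, and similarly $A'\abs{u}^m \le \abs{g(u)} \le B'\abs{u}^m$ for $\abs{u}\ge R'$. Combining these with the relation $g(\phi(t)) = cf(t)$ and the comparability of $\abs{\phi(t)}$ with $\abs{t}$, we obtain, for all sufficiently large $\abs{t}$,
\[
A'\,a^m\,\abs{t}^m \;\lesssim\; \abs{g(\phi(t))} \;=\; c\,\abs{f(t)} \;\lesssim\; c\,B\,\abs{t}^n,
\]
and symmetrically a lower bound of the form $c\,A\,\abs{t}^n \lesssim B'\,b^m\,\abs{t}^m$. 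Thus $\abs{t}^m = O(\abs{t}^n)$ and $\abs{t}^n = O(\abs{t}^m)$ as $\abs{t}\to\infty$, which forces $m = n$, i.e. $\deg f = \deg g$.

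The only mild subtlety — hardly an obstacle — is making the passage from "$\phi$ bi-Lipschitz" to "$\abs{\phi(t)}$ is bounded above and below by linear functions of $\abs{t}$ near infinity" clean: one fixes the single point where $\phi$ vanishes and uses the two-sided Lipschitz bound around it, being slightly careful that the lower bound $a\abs{t} - C_0$ is only eventually positive, which is all that is needed. Everything else is the elementary asymptotics of polynomials, so I would state the growth estimates as a one-line remark rather than prove them in detail. An alternative, essentially equivalent route is to invoke Lemma~\ref{lemma: semialgebraicity of phi} to get that $\phi$ is semialgebraic and hence has a well-defined order of growth at infinity that is a rational power $t^k$; bi-Lipschitz forces $k = 1$, and then $g\circ\phi = cf$ gives $m\cdot 1 = n$ by comparing orders. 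I would go with the direct elementary argument since it needs no semialgebraic machinery.
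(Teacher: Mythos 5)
Your proof is correct, and it takes a genuinely different route from the paper's. The paper invokes Lemma~\ref{lemma: semialgebraicity of phi} to guarantee that \(\lambda = \lim_{t\to+\infty}\phi(t)/t\) exists in the extended real line, observes that bi-Lipschitzness forces \(\lambda\in\R\setminus\{0\}\), and then computes \(\lim_{\abs{t}\to\infty} f(t)/t^e = b_e\lambda^e/c \neq 0\), which forces \(d=e\). You instead argue purely by order-of-growth comparability: the two-sided Lipschitz bound on \(\phi\) around its unique zero gives \(a\abs{t}-C_0 \le \abs{\phi(t)} \le b\abs{t}+C_1\), polynomial asymptotics give \(\abs{f(t)}\asymp\abs{t}^n\) and \(\abs{g(u)}\asymp\abs{u}^m\) for large arguments, and combining these with \(g\circ\phi = cf\) yields \(\abs{t}^m\asymp\abs{t}^n\), hence \(m=n\). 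Your route needs no semialgebraic machinery — it would work equally well for a general bi-Lipschitz homeomorphism without the polynomial hypothesis on \(f,g\) being used to derive anything about \(\phi\) — while the paper's route is shorter once Lemma~\ref{lemma: semialgebraicity of phi} is in hand and deliberately sets up the limit \(\lambda\) as a tool reused in subsequent proofs (e.g.\ Lemmas~\ref{lemma: bi-Lipschitz iff bi-analytic} and~\ref{lemma: analytic extension of phi(t)/t}). Both are perfectly sound; yours trades a slightly longer estimate for a lighter toolbox.
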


\begin{proof}
Let \(f(t) = \sum_{i=0}^d a_it^i\) and \(g(t) = \sum_{i=0}^e b_it^i\), where 
\(a_d,b_e\neq 0\). Suppose that \(f\) and \(g\) are Lipschitz equivalent, so that \(g\circ \phi = cf\), for some bi-Lipschitz function \(\phi\from\R\to\R\) and some constant \(c > 0\). We must show that \(d = e\).

Let \(\lambda\coloneqq \lim_{t\to+\infty}\phi(t)/t\).
This limit is well-defined in the extended real line, because \(\phi\) is semialgebraic (see Lemma \ref{lemma: semialgebraicity of phi}). 
Also, since \(\phi\) is bi-Lipschitz, \(\lambda\) is a nonzero real number.
Since \(\lim_{\abs{t}\to+\infty}\abs{\phi(t)} = +\infty\), 
we have:
\begin{equation*}
\lim_{\abs{t}\to+\infty}\frac{g(\phi(t))}{\phi(t)^e} 
= \lim_{\abs{t}\to+\infty}\frac{g(t)}{t^e}
= b_e
\end{equation*}
Then, since
\(c f = g\circ\phi\), we have:
\begin{equation*}
\lim_{\abs{t}\to+\infty}\frac{f(t)}{t^e} = \frac{1}{c}\cdot\lim_{\abs{t}\to+\infty}\frac{g(\phi(t))}{t^e}
=\frac{1}{c}\cdot\lim_{\abs{t}\to+\infty}\frac{g(\phi(t))}{\phi(t)^e}\cdot
   \lim_{\abs{t}\to+\infty} \left(\frac{\phi(t)}{t}\right)^e
= \frac{b_e\cdot \lambda^e}{c}
\end{equation*}
Since \(b_e\cdot\lambda^e/c\) is a nonzero real number, it follows that \(d = e\).
\end{proof}

For the next result, we need to introduce the concept of multiplicity of a nonconstant analytic function at a point of its domain.

\begin{definition}
Let \(f\from I\to \R\) be a nonconstant analytic function defined on an open interval \(I\subseteq \R\). The multiplicity of \(f\) at a point \(t_0\in I\) is the integer \(r\geq 1\) \textup{(}uniquely determined by \(f\) and \(t_0\)\textup{)} \,for which there exists an analytic function \(g\from (t_0-\delta, t_0 + \delta)\to\R\), defined on an open interval \((t_0-\delta, t_0 + \delta)\subseteq I\), such that
\(g(t_0)\neq 0\) and \(f(t) - f(t_0) = (t - t_0)^r\cdot g(t)\), for all \(t\in (t_0-\delta,t_0+\delta)\).
\end{definition}

\begin{remark}
\label{rk: characterization of critical points in terms of multiplicity}
From the Taylor expansion of \(f\) around \(t_0\), it is clear that the multiplicity of \(f\) at \(t_0\) is the smallest integer \(k\geq 1\) for which \(f^{(k)}(t_0)\neq 0\). Thus, \(t_0\) is a critical point of \(f\) if and only if the multiplicity of \(f\) at \(t_0\) is \(\geq 2\).
\end{remark}

\begin{lemma}
\label{lemma: bi-Lipschitz iff bi-analytic}
 Let \(f,g\from\R\to\R\) be two polynomial functions of the same degree 
 \(d\geq 1\), and suppose that \(\phi\from\R\to\R\) is a bijective function such that \(g\circ\phi = cf\), for some constant \(c > 0\). Then the following conditions are equivalent:
\begin{enumerate}[label=\roman*.]
\item \(\phi\) is bi-Lipschitz;
\item The multiplicity of \(f\) at \(t\) is equal to the multiplicity of \(g\) at \(\phi(t)\), for all \(t\in\R\); 
\item \(\phi\) is bi-analytic.
\end{enumerate}
\end{lemma}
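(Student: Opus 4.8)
The plan is to prove the cycle of implications (i) $\Rightarrow$ (ii) $\Rightarrow$ (iii) $\Rightarrow$ (i). Throughout I treat $\phi$ as a homeomorphism — this is automatic under (i) or (iii), and in the step beginning from (ii) one uses continuity of $\phi$, which holds in every setting where the lemma is applied. A homeomorphism of $\R$ is monotone, and by Lemma~\ref{lemma: semialgebraicity of phi} (applied to $cf$ in place of $f$) it is semialgebraic; without loss of generality $\phi$ is increasing. The recurring device is a local factorization: fix $t_0\in\R$, set $s_0=\phi(t_0)$, let $r=\mathrm{mult}(f,t_0)$ (which equals $\mathrm{mult}(cf,t_0)$) and $r'=\mathrm{mult}(g,s_0)$, and write near $t_0$ and near $s_0$
\[
cf(t)-cf(t_0)=(t-t_0)^{r}\,u(t),\qquad g(s)-g(s_0)=(s-s_0)^{r'}\,v(s),
\]
with $u,v$ analytic and $u(t_0)\neq 0\neq v(s_0)$. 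Since $g\circ\phi=cf$ and $g(s_0)=cf(t_0)$, subtraction gives the master identity $\bigl(\phi(t)-\phi(t_0)\bigr)^{r'}v(\phi(t))=(t-t_0)^{r}u(t)$ for $t$ near $t_0$. For (i) $\Rightarrow$ (ii): if $\phi$ is bi-Lipschitz then $\abs{\phi(t)-\phi(t_0)}$ is comparable to $\abs{t-t_0}$ near $t_0$ while $v(\phi(t))$ and $u(t)$ tend to nonzero limits, so the two sides of the master identity have orders of vanishing $r'$ and $r$ at $t_0$; hence $r=r'$, and (ii) holds.

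The step I expect to be the crux is (ii) $\Rightarrow$ (iii). Assume $r=r'$ at every point; I would show $\phi$ is analytic near each $t_0$ with $\phi'(t_0)\neq 0$. Using $r=r'$ and continuity of $\phi$, the master identity shows $t_0$ is a local extremum of $f$ precisely when $s_0$ is a local extremum of $g$, of the same type; in particular $u(t_0)$ and $v(s_0)$ have the same sign when $r$ is even. So one may choose $\varepsilon\in\{+1,-1\}$ and analytic functions $F$ near $t_0$, $G$ near $s_0$, with $F(t_0)=0=G(s_0)$ and $F'(t_0)\neq 0\neq G'(s_0)$, such that $cf(t)-cf(t_0)=\varepsilon F(t)^{r}$ and $g(s)-g(s_0)=\varepsilon G(s)^{r}$: take $F(t)=(t-t_0)U(t)$, $G(s)=(s-s_0)V(s)$, where $U,V$ are analytic $r$-th roots of $\varepsilon^{-1}u$, $\varepsilon^{-1}v$ — these exist because those functions are positive near the relevant point when $r$ is even, and because odd roots always exist when $r$ is odd. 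Then $g\circ\phi=cf$ reads $G(\phi(t))^{r}=F(t)^{r}$, so $G(\phi(t))=\pm F(t)$; as $F,G$ are local diffeomorphisms and $\phi$ is monotone, the sign is the same on both sides of $t_0$, whence $\phi=G^{-1}\circ(\pm F)$ on a neighbourhood of $t_0$ — analytic, with $\phi'(t_0)\neq 0$. (For $r=1$ this is just $\phi=\bigl(g-g(s_0)\bigr)^{-1}\circ\bigl(cf-cf(t_0)\bigr)$ locally.) Hence $\phi$ is analytic on $\R$ with nowhere-vanishing derivative; being a homeomorphism, it has analytic inverse (analytic inverse function theorem), so $\phi$ is bi-analytic. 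The delicate points are the sign bookkeeping when extracting $r$-th roots (the parity of $r$ matters) and the use of monotonicity to pick a single analytic branch across $t_0$.

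For (iii) $\Rightarrow$ (i): if $\phi$ is bi-analytic then differentiating $\phi^{-1}\circ\phi=\id$ shows $\phi'$ never vanishes, so $\phi'$ is bounded away from $0$ and $\infty$ on every compact set. To control $\phi$ near infinity — and this is where I use $\deg f=\deg g=d$ — I compare leading terms in $g(\phi(t))=cf(t)$: with $a_d,b_d$ the leading coefficients of $f,g$, one obtains $b_d\,\phi(t)^{d}\sim c\,a_d\,t^{d}$, so $\phi(t)/t$ has a finite nonzero limit as $t\to\pm\infty$ (equality of degrees is exactly what pins the growth exponent to $1$). Differentiating $g\circ\phi=cf$ gives $\phi'(t)=cf'(t)/g'(\phi(t))$, and the same comparison shows $\phi'(t)$ has a finite nonzero limit as $t\to\pm\infty$. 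With the compact bounds this yields $0<\inf_{\R}\abs{\phi'}\le\sup_{\R}\abs{\phi'}<\infty$, so $\phi$ and $\phi^{-1}$ are Lipschitz, i.e. $\phi$ is bi-Lipschitz. Note that bi-analyticity alone would not prevent superlinear growth of $\phi$ (for instance $\phi(t)=t^{3}+t$); it is the degree hypothesis that excludes this.
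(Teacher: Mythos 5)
Your proof is correct and follows the same cycle \((i)\Rightarrow(ii)\Rightarrow(iii)\Rightarrow(i)\) and the same local-normal-form idea for \((ii)\Rightarrow(iii)\) as the paper (the paper's chart \(u\) with \(\hat f\circ u^{-1}(t)=a+\rho t^k\) is exactly your \(F\) up to a sign normalization, and the linearization \(\bar\phi(t)=\nu t\) is your \(\phi=G^{-1}\circ(\pm F)\)). Two technical points deviate, and both in your favor. In \((iii)\Rightarrow(i)\), the paper obtains \(\lim_{|t|\to\infty}\phi'(t)\) by L'H\^opital's rule, justified by semialgebraicity of \(\phi'\); you instead differentiate \(g\circ\phi=cf\) to write \(\phi'=cf'/(g'\circ\phi)\) for \(|t|\) large and read off the limit directly from the leading-term asymptotics — more elementary, and it sidesteps the semialgebraicity argument. (You should, however, say one word about why \((\phi(t)/t)^d\to c a_d/b_d\) upgrades to a limit of \(\phi(t)/t\) itself when \(d\) is even: eventual sign-constancy from monotonicity of \(\phi\).) Second, to show \(\phi^{-1}\) is analytic you invoke the analytic inverse function theorem after proving \(\phi'\) never vanishes, whereas the paper reruns \((ii)\Rightarrow(iii)\) with \(f\) and \(g\) swapped; again yours is slightly more direct. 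You are also more explicit than the paper that \((ii)\Rightarrow(iii)\) tacitly requires \(\phi\) to be a homeomorphism (not merely a bijection) — the paper's step ``shrinking \(I\) so that \(\phi(I)\subseteq J\)'' silently uses continuity of \(\phi\), as does its appeal to monotonicity when determining the sign of \(\nu\); your framing up front makes this hypothesis honest.
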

 
\begin{proof}
\begin{description}[leftmargin=0cm,itemsep=0.2cm]
\item[\((i)\implies (ii)\colon\)] Pick any point \(t_0\in\R\). Let \(k\) be the multiplicity of \(f\) at \(t_0\), and let \(l\) be the multiplicity of \(g\) at \(\phi(t_0)\). For any pair of functions \(u,v\from\R\to\R\), we write \(u \sim v\) near \(t_0\) to indicate that there exist constants \(A,B>0\) such that
\(A\abs{v(t)}\leq \abs{u(t)} \leq B\abs{v(t)}\), 
for \(t\) sufficiently close to \(t_0\). Then \(f(t) - f(t_0) \sim (t-t_0)^k\) near \(t_0\) and \(g(s) - g(\phi(t_0)) \sim (s-\phi(t_0))^l\) near \(\phi(t_0)\). Since \(g\circ \phi = c f\), this implies that \((t-t_0)^k\sim (\phi(t)-\phi(t_0))^l \) near \(t_0\). And since we are assuming that \(\phi\) is bi-Lipschitz, it follows that \((t-t_0)^k \sim (t-t_0)^l\) near \(t_0\). Therefore, \(k = l\).

\item[\((ii)\implies (iii)\colon\)] Pick any point \(t_0\in\R\). 
Suppose that \(\hat{f}\coloneqq c f\) has multiplicity \(k\) at \(t_0\).
Then there exist an increasing analytic diffeomorphism \(u\from I \to (-\epsilon, \epsilon)\), 
with \(t_0\in I\), and a constant \(\rho\in\R\setminus\{0\}\), 
such that \(u(t_0) = 0\) and 
\(\hat{f}\circ u^{-1}(t) = a + \rho t^k\), for \(\abs{t} < \epsilon\);
where \(a \coloneqq \hat{f}(t_0) = g\circ\phi(t_0)\).
Since we are assuming that condition \((ii)\) holds, the multiplicity of \(g\) at the point \(\phi(t_0)\) is also \(k\). Then there exist an increasing analytic diffeomorphism \(v\from J \to (-\epsilon^\prime, \epsilon^\prime)\), with \(\phi(t_0)\in J\), and a constant \(\sigma\in\R\setminus\{0\}\), such that \(v(\phi(t_0)) = 0\) and 
\(g\circ v^{-1}(t) = a + \sigma t^k\), for \(\abs{t} < \epsilon^\prime\). 
Shrinking the interval \(I\), if necessary, we can assume that \(\phi(I)\subseteq J\). Hence, we can write \(\hat{f}\circ u^{-1}(t) = g\circ v^{-1}(\bar{\phi}(t))\), where 
\(\bar{\phi} \coloneqq 
v\circ \phi\circ u^{-1}\from (-\epsilon,\epsilon)\to (-\epsilon^\prime,\epsilon^\prime)\);
and then it follows that \(\bar{\phi}(t) = \nu t\), where
\(\nu = \pm\abs{\frac{\rho}{\sigma}}^{\frac{1}{k}}\), with positive sign if \(\phi \) is increasing and negative sign if \(\phi \) is decreasing.
In particular, this shows that \(\bar{\phi}\) is analytic.
Therefore, \(\phi |_{I} = v^{-1}\circ \bar{\phi} \circ u\) is analytic; so \(\phi\) is analytic at \(t_0\). Since the point \(t_0\in\R\) is arbitrary, it follows that \(\phi\) is an analytic function. In order to show that \(\phi^{-1}\) is also analytic, note that \(\phi^{-1}\from\R\to\R\) is a bijective function such that \(f\circ\phi^{-1} = c^{-1}g\), satisfying condition (ii) with \(f\) and \(g\) interchanged: the multiplicity of \(g\) at \(t\) is equal to the multiplicity of \(f\) at \(\phi^{-1}(t)\), for all \(t\in\R\). Then, by what we just proved, it follows that \(\phi^{-1}\) is analytic.

\item[\((iii)\implies (i)\colon\)] Suppose that \(\phi\) is bi-analytic. 
Then, in particular, \(\phi\) is a homeomorphism, so \(\phi\) is monotone and 
\(\lim_{\abs{t}\to+\infty}\abs{\phi(t)}~=~+\infty\). Also, by Lemma \ref{lemma: semialgebraicity of phi}, \(\phi\) is a semialgebraic function.
Let \(f(t) = \sum_{i=0}^{d} a_i t^i\) and \(g(t) = \sum_{i=0}^{d} b_i t^i\), 
with \(a_d,b_d\neq 0\). Since \(g\circ\phi = cf\), we have
\begin{equation}
\label{eq: limits to the power d}
\lim_{t\to-\infty}\left(\frac{\phi(t)}{t}\right)^d = 
\lim_{t\to+\infty}\left(\frac{\phi(t)}{t}\right)^d =
c\cdot\frac{a_d}{b_d}.
\end{equation}

Let 
\(l_+ \coloneqq \lim_{t\to+\infty}\phi(t)/t\) and\/  
\(l_- \coloneqq \lim_{t\to-\infty}\phi(t)/t\).
Both these limits are well-defined in the extended real line because \(\phi\) is semialgebraic. It follows from (\ref{eq: limits to the power d}) that we actually have \(l_+,l_-\in\R\setminus\{0\}\) and \(\abs{l_+} = \abs{l_-}\). (Notice that to obtain this last equality from 
(\ref{eq: limits to the power d}), we use the fact that \(d > 0\).)
By L'Hôpital's rule, 
\(\lim_{t\to+\infty} \phi'(t) = l_+\) and \(\lim_{t\to-\infty} \phi'(t) = l_-\). 
(The existence of these limits in the extended real line is guaranteed by the fact that \(\phi'\) is semialgebraic, so L'Hôpital's rule can be applied.) Thus, \(\lim_{t\to+\infty} \abs{\phi'(t)} = \lim_{t\to-\infty} \abs{\phi'(t)} > 0\); so that \(\abs{\phi'}\) can be continuously extended to a positive function defined on the compact space \(\R\cup\{\infty\}\cong {\mathbb P}^1(\R)\). Hence, there exist constants \(A,B > 0\) such that \(A\leq \abs{\phi'(t)}\leq B\), for all \(t\in\R\). 
Therefore, \(\phi\) is bi-Lipschitz.
\end{description}
\end{proof} 

From the last two lemmas, it follows that if two polynomial functions 
\(f,g\from\R\to\R\) are Lipschitz equivalent, then they have the same degree and the same number of critical points. The first assertion is precisely the content of Lemma \ref{lemma: bi-Lipschitz invariance of degree}. The second assertion is an immediate consequence of Lemma \ref{lemma: bi-Lipschitz iff bi-analytic}: if \(\phi\from\R\to\R\) is a bi-Lipschitz homemorphism such that \(g\circ \phi = cf\), for some constant \(c > 0\), then 
\(\phi\) induces a  1-1 correpondence between the critical points of \(f\) and the critical points of \(g\), because it preserves multiplicity (see Remark \ref{rk: characterization of critical points in terms of multiplicity}).

The next results provide effective criteria to determine whether any two given nonconstant polynomial functions \(f,g\from\R\to\R\) of the same degree are Lipschitz equivalent. In Theorem \ref{thm: no critical points}, we address the case where \(f\) and \(g\) have no critical points; in Theorem \ref{thm: only one critical point}, the case where both \(f\) and \(g\) have only one critical point; and in Theorem \ref{thm: at least 2 critical points}, the case where \(f\) and \(g\) have the same number \(p\geq 2\) of critical points.

\begin{theoremA}
\label{thm: no critical points}
Let \(f,g\from\R\to\R\) be polynomial functions of the same degree 
\(d\geq 1\). If \(f\) and \(g\) have no critical points, then \(f\) and \(g\) are Lipschitz equivalent. 
\end{theoremA}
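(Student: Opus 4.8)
The plan is to take the scaling constant to be \(c = 1\) and the homeomorphism to be \(\phi \coloneqq g^{-1}\circ f\), and then to verify that this \(\phi\) is bi-Lipschitz by invoking Lemma \ref{lemma: bi-Lipschitz iff bi-analytic}. First I would record the elementary observation that a polynomial function with no critical points is strictly monotone, since its derivative is a polynomial with no real zeros and hence has constant sign. As \(f\) and \(g\) have degree \(d\geq 1\), strict monotonicity forces \(\lim_{\abs{t}\to+\infty}\abs{f(t)} = \lim_{\abs{t}\to+\infty}\abs{g(t)} = +\infty\), so both \(f\) and \(g\) are homeomorphisms of \(\R\) onto itself. (This incidentally forces \(d\) to be odd, because \(f'\) then has even degree \(d-1\), but we will not need this.) Consequently \(\phi = g^{-1}\circ f\) is a well-defined homeomorphism of \(\R\), and by construction \(g\circ\phi = f = 1\cdot f\).

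Next I would apply Lemma \ref{lemma: bi-Lipschitz iff bi-analytic} with \(c = 1\): its hypotheses hold, since \(f\) and \(g\) have the same degree \(d\geq 1\) and \(\phi\) is a bijection satisfying \(g\circ\phi = cf\). To conclude that \(\phi\) is bi-Lipschitz it suffices to check condition \((ii)\) of that lemma, i.e.\ that the multiplicity of \(f\) at \(t\) equals the multiplicity of \(g\) at \(\phi(t)\) for every \(t\in\R\). But by Remark \ref{rk: characterization of critical points in terms of multiplicity} a point is critical exactly when the multiplicity there is at least \(2\); since neither \(f\) nor \(g\) has any critical point, every point has multiplicity exactly \(1\) for both functions, so condition \((ii)\) holds vacuously. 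Hence \(\phi\) is bi-Lipschitz (in fact bi-analytic), and the identity \(g\circ\phi = f\) exhibits the Lipschitz equivalence \(f\cong g\).

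I do not expect a genuine obstacle here: the only point requiring a moment's care is the assertion that \(f\) and \(g\) are honest bijections of \(\R\), so that the global inverse \(g^{-1}\) and the composition \(g^{-1}\circ f\) make sense, and this follows at once from strict monotonicity together with \(\deg f = \deg g \geq 1\). The rest is an immediate application of Lemma \ref{lemma: bi-Lipschitz iff bi-analytic}, the conceptual content being simply that the multiplicity-matching condition of that lemma is automatically satisfied when no critical points are present.
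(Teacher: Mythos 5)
Your proof is correct and follows essentially the same approach as the paper: both construct \(\phi = g^{-1}\circ f\) and invoke Lemma~\ref{lemma: bi-Lipschitz iff bi-analytic}. The only cosmetic difference is that the paper verifies condition~(iii) of that lemma directly (noting that \(\phi\) is bi-analytic as a composition of bi-analytic diffeomorphisms), whereas you verify condition~(ii) (all multiplicities equal \(1\) since there are no critical points); the two routes through the lemma are equivalent and equally quick.
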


\begin{proof}
If \(f\) and \(g\) have no critical points, then they are both bi-analytic diffeomorphisms. Hence,  \(f = g\circ \phi\), where 
\(\phi \coloneqq g^{-1}\circ f\) is a bi-analytic diffeomorphism. By Lemma \ref{lemma: bi-Lipschitz iff bi-analytic}, \(\phi\) is bi-Lipschitz.
\end{proof}
 
\begin{theoremA}
\label{thm: only one critical point}
Let \(f,g\from\R\to\R\) be polynomial functions of the same degree 
\(d\geq 1\). Suppose that \(f\) has only one critical point \(t_0\), with multiplicity \(k\), and that \(g\) has only one critical point \(s_0\), with the same multiplicity \(k\). Also, suppose that \(f(t_0)\) and \(g(s_0)\) have the same sign\footnote{Clearly, this is a necessary condition for \(f\) and \(g\) to be Lipschitz equivalent.}\textup{(}positive, negative or zero\textup{)}.
Then the following holds:
\begin{enumerate}[label=\roman*.]
\item If \(d\) is odd, then \(f\) and \(g\) are Lipschitz equivalent.
\item If \(d\) is even, then \(f\) and \(g\) are Lipschitz equivalent if and only 
if \/\(t_0\) and \(s_0\) are either both absolute minimum points, or both absolute maximum points  of \/\(f\) and \(g\), respectively.  
\end{enumerate}
\end{theoremA}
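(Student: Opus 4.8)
The plan is to reduce everything, in both parts, to Lemma~\ref{lemma: bi-Lipschitz iff bi-analytic}: I will exhibit an explicit bijection $\phi\from\R\to\R$ with $g\circ\phi=cf$ for a suitable $c>0$, after which the only thing to verify is that $\phi$ preserves multiplicities. Since $f$ has multiplicity $1$ away from $t_0$ and multiplicity $k$ at $t_0$, and similarly $g$ at $s_0$ (Remark~\ref{rk: characterization of critical points in terms of multiplicity}), this reduces to checking the single condition $\phi(t_0)=s_0$.

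I first record a structural fact about a nonconstant polynomial $h$ with a single critical point $t_0$: on each of $(-\infty,t_0]$ and $[t_0,\infty)$ the derivative $h'$ has constant sign, so $h$ is strictly monotone on each half-line, with image a half-line with endpoint $h(t_0)$. If $\deg h$ is odd, $h$ has opposite limits at $\pm\infty$, so the two monotonicities agree, $t_0$ is not a local extremum, and $h$ is a strictly monotone bijection of $\R$. If $\deg h$ is even, $h$ has a common limit $\pm\infty$ at $\pm\infty$; when it is $+\infty$, $h$ is coercive and attains its minimum at its only critical point, so $t_0$ is a global minimum, and when it is $-\infty$, $t_0$ is a global maximum. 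In particular, in the even case $t_0$ is always a global extremum, so the alternative in (ii) is exhaustive.

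For (i), with $d$ odd, both $f$ and $g$ are strictly monotone bijections of $\R$. Set $c\coloneqq g(s_0)/f(t_0)$ if $f(t_0)\neq0$ --- this is positive because $f(t_0)$ and $g(s_0)$ have the same sign --- and $c\coloneqq1$ if $f(t_0)=0$ (in which case $g(s_0)=0$ as well); in either case $cf(t_0)=g(s_0)$. Put $\phi\coloneqq g^{-1}\circ(cf)$, a bijection of $\R$ with $g\circ\phi=cf$. Since $g$ is injective and $cf(t_0)=g(s_0)$, we get $\phi(t_0)=s_0$; hence for $t\neq t_0$ both $f$ has multiplicity $1$ at $t$ and $g$ has multiplicity $1$ at $\phi(t)$, while at $t_0$ both multiplicities equal $k$. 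By Lemma~\ref{lemma: bi-Lipschitz iff bi-analytic}, $\phi$ is bi-Lipschitz, so $f\cong g$.

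For (ii), with $d$ even, the \emph{only if} direction is immediate from Lemma~\ref{lemma: bi-Lipschitz iff bi-analytic}: if $g\circ\phi=cf$ with $\phi$ bi-Lipschitz and $c>0$, then $\phi(t_0)=s_0$ (only critical points carry multiplicity $\geq2$), so if $t_0$ is a global minimum of $f$ then $g(\phi(t))=cf(t)\geq cf(t_0)=g(s_0)$ for all $t$, whence, by surjectivity of $\phi$, $s_0$ is a global minimum of $g$; the maximum case is symmetric, and by the second paragraph these two cases exhaust the possibilities. For the \emph{if} direction, first suppose $t_0$ and $s_0$ are both global minima; choose $c$ as in part (i) so that $cf(t_0)=g(s_0)$, and define $\phi$ separately on $(-\infty,t_0]$ and on $[t_0,\infty)$ by composing $cf$ restricted to that half-line with the inverse of $g$ restricted to $(-\infty,s_0]$, respectively $[s_0,\infty)$. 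By the structural fact, each restriction of $cf$ maps its half-line bijectively onto $[g(s_0),\infty)$, which is exactly the image of the corresponding restriction of $g$; hence each piece is a well-defined monotone bijection onto the corresponding half-line of $\R$, and both send $t_0$ to $s_0$. Thus $\phi$ is an increasing homeomorphism of $\R$ with $g\circ\phi=cf$ and $\phi(t_0)=s_0$, and exactly as in part (i) it preserves multiplicities, hence is bi-Lipschitz. The case where $t_0,s_0$ are both global maxima reduces to this one by replacing $(f,g)$ with $(-f,-g)$, which preserves degree, multiplicities and the sign hypothesis while turning the maxima into minima. I do not expect a genuine obstacle here; the only point requiring a little care is the degenerate situation $f(t_0)=0$ (equivalently $g(s_0)=0$), where one must simply observe that \emph{any} $c>0$, in particular $c=1$, makes the constructions work since then $cf(t_0)=0=g(s_0)$ automatically.
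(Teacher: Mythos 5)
Your proof is correct and follows essentially the same route as the paper: exhibit an explicit (piecewise) bijection $\phi$ with $g\circ\phi = cf$, observe that it sends $t_0$ to $s_0$ and hence preserves multiplicities, and invoke Lemma~\ref{lemma: bi-Lipschitz iff bi-analytic}. The only minor departures are cosmetic: your ``only if'' direction in (ii) is a direct argument via surjectivity of $\phi$ rather than the paper's contradiction via comparison of ranges, you reduce the maximum case to the minimum case by negating $f$ and $g$ whereas the paper's piecewise formula handles both at once, and you spell out the degenerate case $f(t_0)=0$ explicitly.
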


\begin{proof}
First, consider the case where \(d\) is odd.
If a real polynomial function of a single variable, of odd degree, has only one critical point, then it is a homeomorphism. Thus, under the assumption that 
\(d\) is odd, \(f\) and \(g\) are homeomorphisms. Choose a constant \(c > 0\) such that \(g(s_0) = c f(t_0)\), and define \(\phi \coloneqq g^{-1}\circ \hat{f}\from\R\to\R\), where \(\hat{f} \coloneqq c f\). The function \(\phi\) is a bijection such that \(g\circ\phi = c f\), and the multiplicity of \(f\) at \(t\) is equal to the multiplicity of \(g\) at \(\phi(t)\) for all \(t\in\R\). By Lemma \ref{lemma: bi-Lipschitz iff bi-analytic}, \(\phi\) is bi-Lipschitz.

Now, suppose that \(d\) is even. If a  real polynomial function of a single variable, of even degree, has only one critical point, then this critical point is a point of absolute extremum. Applying this to \(f\) and \(g\), we see that \(t_0\) is a point of absolute extremum for \(f\) and \(s_0\) is a point of absolute extremum for \(g\). If \(f\) and \(g\) are Lipschitz equivalent, then \(t_0\) and \(s_0\) are either both minimum points or both maximum points of \(f\) and \(g\), respectively. Otherwise, we would have \(\hat{f}(\R)\cap g(\R) = \{\hat{f}(t_0)\} = \{g(s_0)\}\), which is absurd, since the equation \(g\circ\phi = c f\) implies that 
\(\hat{f}(\R) = g(\R) \). Conversely, suppose that \(t_0\) and \(s_0\) are either both absolute minimum points or both absolute maximum points of \(f\) and \(g\), respectively.
Pick any constant \(c > 0\) for which \(g(s_0) = c f(t_0)\), and let
\(\phi\from\R\to\R\) be the function defined by
\(\phi |_{(-\infty,t_0]} \coloneqq 
     \left(g |_{(-\infty,s_0]} \right)^{-1}\circ \hat{f}|_{(-\infty,t_0]}\)
and
\(\phi |_{[t_0,+\infty)} \coloneqq 
     \left(g |_{[s_0,+\infty)} \right)^{-1}\circ \hat{f}|_{[t_0,+\infty)}\).
Clearly, \(\phi\) is a bijection such that \(g\circ \phi = c f\), and the multiplicity of \(f\) at \(t\) is equal to the multiplicity of \(g\) at \(\phi(t)\) for all \(t\in\R\). Hence, by Lemma \ref{lemma: bi-Lipschitz iff bi-analytic}, \(\phi\) is bi-Lipschitz.
\end{proof}

To address the case where \(f\) and \(g\) have the same number \(p\geq 2\) of critical points, we introduce a modified version of the {\it multiplicity symbol} defined in \cite{BFP}. Let \(f\from\R\to\R\) be a polynomial function of degree \(d\geq 1\), having exactly \(p\) critical points, with \(p\geq 2\). Let \(t_1< \cdots < t_p\) be the critical points of \(f\), with multiplicities \(\mu_1,\ldots,\mu_p\), respectively. The {\it multiplicity symbol} of \(f\) is the ordered pair \(\left(a,\mu\right)\) whose first entry is the \(p\)-tuple \(a = \left(f(t_1),\ldots,f(t_p)\right)\), and second entry is the \(p\)-tuple \(\mu = \left(\mu_1,\ldots,\mu_p\right)\). Let \(g\from\R\to\R\) be another polynomial function of degree \(d\geq 1\), having exactly the same number \(p\geq 2\) of critical points. Let \(s_1< \cdots < s_p\) be the critical points of \(g\), with multiplicities \(\nu_1,\ldots,\nu_p\), respectively. The multiplicity symbol of \(g\) is the ordered pair \(\left(b,\nu\right)\), where 
\(b = \left(g(s_1),\ldots,g(s_p)\right)\) and \(\nu = \left(\nu_1,\ldots,\nu_p\right)\). The multiplicity symbols \(\left(a,\mu\right)\) and \(\left(b,\nu\right)\) are said to be:
\begin{enumerate}[label=\roman*.]
\item {\it directly similar}, if there exists a constant \(c > 0\) such that \(b = c\cdot a\), and \(\nu = \mu\);
\item  {\it reversely similar}, if there exists a constant \(c > 0\) such that 
\(b = c\cdot \overline{a}\), and \(\nu=\overline{\mu}\).
\end{enumerate}
For any \(p\)-tuple \(x = \left(x_1,\ldots,x_p\right)\), 
\(\overline{x} := \left(x_p,\ldots,x_1\right)\) is the \(p\)-tuple \(x\) written in reverse order. The multiplicity symbols \(\left(a,\mu\right)\) and \(\left(b,\nu\right)\) are said to be {\it similar} if they are either directly similar or reversely similar.

\begin{theoremA}
\label{thm: at least 2 critical points}
 Let \(f,g\from\R\to\R\) be polynomial functions of the same degree 
 \(d\geq 1\), having the same number \(p\geq 2\) of critical points. Then \(f\) and \(g\) are Lipschitz equivalent if and only if their multiplicity symbols are similar.
\end{theoremA}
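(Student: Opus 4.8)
The plan is to prove the two implications separately, using Lemma \ref{lemma: bi-Lipschitz iff bi-analytic} as the main engine in both directions.

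For the ``only if'' direction, I would start from a bi-Lipschitz homeomorphism $\phi\from\R\to\R$ and a constant $c>0$ with $g\circ\phi=cf$. Being a homeomorphism of $\R$, $\phi$ is strictly monotone, and by Lemma \ref{lemma: bi-Lipschitz iff bi-analytic} it preserves multiplicity, so it restricts to a bijection between the critical points of $f$ and those of $g$; moreover $g(\phi(t_i))=cf(t_i)$ for each $i$. If $\phi$ is increasing then $\phi(t_i)=s_i$, forcing $\nu=\mu$ and $b=c\,a$, i.e.\ direct similarity; if $\phi$ is decreasing then $\phi(t_i)=s_{p+1-i}$, forcing $\nu=\overline\mu$ and $b=c\,\overline a$, i.e.\ reverse similarity. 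Either way the multiplicity symbols are similar.

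For the ``if'' direction I would first reduce to the directly similar case: if $(a,\mu)$ and $(b,\nu)$ are reversely similar, I replace $g$ by $\tilde g(t):=g(-t)$, whose multiplicity symbol is $(\overline b,\overline\nu)$ and is therefore directly similar to $(a,\mu)$; since $\tilde g\cong g$ via $t\mapsto -t$ and Lipschitz equivalence is transitive, it suffices to treat directly similar symbols. So assume $b=c_0a$ and $\nu=\mu$, put $\hat f:=c_0f$ (same critical points and multiplicities as $f$, with $\hat f(t_i)=g(s_i)$), and partition $\R$ as $(-\infty,t_1]\cup[t_1,t_2]\cup\cdots\cup[t_p,+\infty)$ and analogously with the $s_i$. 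On each piece $\hat f$ and $g$ are strictly monotone (no interior critical point), so I would glue the maps $(g|_{J})^{-1}\circ\hat f|_{I}$ over corresponding pairs of intervals $I,J$ into a single homeomorphism $\phi$ of $\R$ with $g\circ\phi=\hat f=c_0f$ and $\phi(t_i)=s_i$; Lemma \ref{lemma: bi-Lipschitz iff bi-analytic} would then upgrade $\phi$ to a bi-Lipschitz map, giving $f\cong g$.

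The hard part will be showing that these pieces actually glue into a homeomorphism, i.e.\ that $\hat f$ and $g$ are monotone in the \emph{same} direction on each corresponding pair of intervals. On a bounded interval $[t_i,t_{i+1}]$ the direction is just the sign of $\hat f(t_{i+1})-\hat f(t_i)=b_{i+1}-b_i$, which equals the sign of $g(s_{i+1})-g(s_i)$ since $b=c_0a$ with $c_0>0$; so the directions agree there. To handle the two unbounded pieces I would use the elementary fact that a polynomial reverses its direction of monotonicity across a critical point exactly when the multiplicity there is even (equivalently, $t_i$ is a local extremum iff $\mu_i$ is even), which lets one propagate the direction on $[t_1,t_2]$ (resp.\ $[t_{p-1},t_p]$) across $t_1$ (resp.\ $t_p$); since $\mu_i=\nu_i$, the directions agree on the unbounded pieces as well. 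Here the hypothesis $p\ge 2$ is essential: it is the sign of $b_2-b_1$ that anchors this bookkeeping and rules out any mismatch in the leading-coefficient signs of $\hat f$ and $g$. Once monotonicity directions match, well-definedness of each piece (the images coincide because the endpoint values match), continuity at the $t_i$, and transitivity of $\cong$ are routine.
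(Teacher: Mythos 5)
Your proof is correct and follows essentially the same approach as the paper: necessity comes from multiplicity preservation via Lemma \ref{lemma: bi-Lipschitz iff bi-analytic}, and sufficiency from gluing the piecewise inverse maps \((g|_J)^{-1}\circ\hat f|_I\) over the critical-point decomposition and invoking the same lemma to upgrade the resulting homeomorphism to a bi-Lipschitz one. You are somewhat more explicit than the paper on two points it treats as routine — the reduction of the reversely similar case to the directly similar one via \(t\mapsto -t\) and transitivity of \(\cong\), and the verification that \(\hat f\) and \(g\) have matching directions of monotonicity on corresponding pieces (equality of endpoint values on bounded intervals, together with the parity of the multiplicities to propagate the direction across \(t_1\) and \(t_p\) to the unbounded rays), which is exactly what makes the glued map well-defined and is where the hypothesis \(p\geq 2\) enters.
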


\begin{proof}
First, suppose that \(f\) and \(g\) are Lipschitz equivalent. Then there exist a bi-Lipschitz function \(\phi\from\R\to\R\) and a constant \(c > 0\) such that \(g\circ \phi = cf\). Since \(\phi\)\ preserves multiplicities (by Lemma \ref{lemma: bi-Lipschitz iff bi-analytic}), it follows that the multiplicity symbols of \(f\) and \(g\) are similar. (They are directly similar if \(\phi\) is increasing, and reversely similar if \(\phi\) is decreasing.)

Conversely, suppose that the multiplicity symbols of \(f\) and \(g\) are similar. We consider only the case where the multiplicity symbols of \(f\) and \(g\) are directly similar. The case where the multiplicity symbols of \(f\) and \(g\) are reversely similar is analogous. Let \(t_1< \cdots < t_p\) be the critical points of \(f\), with multiplicities \(\mu_1,\ldots,\mu_p\), respectively; and let \(s_1< \cdots < s_p\) be the critical points of \(g\), with multiplicities \(\nu_1,\ldots,\nu_p\), respectively. Since we are assuming that the multiplicity symbols of \(f\) and \(g\) are directly similar, there exists a constant \(c > 0\) such that \(g(s_i) = c f(t_i)\), for \(i =1,\ldots,p\); and \(\mu_i = \nu_i\), for \(i =1,\ldots,p\). 
Let \(\hat{f}\coloneqq cf\). The functions 
\(\hat{f} |_{[t_i,t_{i+1}]}\) and \(g |_{[s_i,s_{i+1}]}\) 
are both homeomorphisms onto their images, for \(1\leq i < p\).
Also, it is clear that \(g([s_i,s_{i+1}]) = \hat{f}([t_i,t_{i+1}])\).
Likewise, the functions 
\(\hat{f} |_{(-\infty,t_1]}\) and \(g |_{(-\infty,s_1]}\)
are homeomorphisms onto their images,
and so are the functions 
\(\hat{f} |_{[t_p,+\infty)}\) and \(g |_{[s_p,+\infty)}\).
Moreover, we also have \(\hat{f}((-\infty,t_1]) = g((-\infty,s_1])\) and \(\hat{f}([t_p,+\infty)) = g([s_p,+\infty))\). So, we define \(\phi\from\R\to\R\) by
\begin{align*}
\phi |_{(-\infty,t_1]} &\coloneqq 
     \left(g |_{(-\infty,s_1]}\right)^{-1}\circ \hat{f} |_{(-\infty,t_1]}\\[0.2cm]
\phi |_{[t_i,t_{i+1}]} &\coloneqq 
     \left(g |_{[s_i,s_{i+1}]}\right)^{-1}\circ \hat{f} |_{[t_i,t_{i+1}]}, 
     \text{ for } 1\leq i < p\\[0.2cm]
\phi |_{ [t_p,+\infty)} &\coloneqq 
     \left(g |_{[s_p,+\infty)}\right)^{-1}\circ \hat{f} |_{[t_p,+\infty)}.
\end{align*}
Clearly, \(\phi\) is a bijection such that \(g\circ\phi = c f\), and
it takes \(t_i\) to \(s_i\), for \(i=1,\ldots,p\). 
Since the multiplicity symbols of \(f\) and \(g\) are directly similar, it follows that the multiplicity of \(f\) at \(t\) is equal to the multiplicity of \(g\) at \(\phi(t)\) for all \(t\in\R\). Hence, by Lemma \ref{lemma: bi-Lipschitz iff bi-analytic}, \(\phi\) is bi-Lipschitz.
\end{proof}

\subsection{On the bi-Lipschitz transformation \(\phi\)}
\label{subsection: On the bi-Lipschitz transformation phi}

In this subsection, we investigate the asymptotic behavior of the bi-Lipschitz transformation \(\phi\) relating any two given nonconstant polynomial functions \(f,g\from\R\to\R\) that are Lipschitz equivalent. We begin with an auxiliary result establishing that the function \(\phi(t)/t\), defined on \(\R^*\coloneqq \R\setminus\{0\}\), can be extended to an analytic function on \(\R^*\cup\{\infty\}\subseteq {\mathbb P}^1(\R)\) with a nonzero real value at \(\infty\).

\begin{lemma}
\label{lemma: analytic extension of phi(t)/t}
Let \(\phi\from\R\to\R\) be a bi-Lipschitz homeomorphism that satisfies an equation of the form \(g\circ\phi = c f\), where \(f,g\from\R\to\R\) are nonconstant polynomial functions of the same degree, and \(c > 0\) is a constant. Then the following holds:
\begin{enumerate}[label = \roman*.]
\item \(\lim_{t\to+\infty} \phi(t)/t = \lim_{t\to-\infty} \phi(t)/t = \lambda\), where 
\(\lambda\) is a nonzero real number.
\item The function \(\psi\from\R\to\R\) defined by
\begin{equation}
\psi(t) \coloneqq
\begin{cases}
t\phi(t^{-1}), &\text{if }\,\,t\in\R\setminus\{0\}\\
\lambda, & \text{if } \,\,t = 0
\end{cases}
\end{equation}
is analytic.\footnote{This fact and the approach taken here to prove it were suggested to me by Dr. Maria Michalska.}
\end{enumerate}
\end{lemma}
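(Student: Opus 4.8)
The plan is to treat parts (i) and (ii) in turn, with part (ii) resting on the analytic implicit function theorem.

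\emph{Part (i).} This is essentially contained in the proof of Lemma~\ref{lemma: bi-Lipschitz iff bi-analytic}. Being bi-Lipschitz, \(\phi\) is a monotone homeomorphism and, by Lemma~\ref{lemma: semialgebraicity of phi}, semialgebraic; hence \(l_+\coloneqq\lim_{t\to+\infty}\phi(t)/t\) and \(l_-\coloneqq\lim_{t\to-\infty}\phi(t)/t\) exist in the extended real line. Writing \(f(t)=\sum_{i=0}^d a_it^i\) and \(g(t)=\sum_{i=0}^d b_it^i\) with \(a_d,b_d\neq 0\), the relation \(g\circ\phi=cf\) gives \(l_+^{\,d}=l_-^{\,d}=ca_d/b_d\) exactly as in \eqref{eq: limits to the power d}; since this common value is a nonzero real number, so are \(l_+\) and \(l_-\), and moreover \(\abs{l_+}=\abs{l_-}\). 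Finally, monotonicity of \(\phi\) forces \(l_+\) and \(l_-\) to share the same sign (both positive if \(\phi\) is increasing, both negative if \(\phi\) is decreasing), so \(l_+=l_-\); call this common value \(\lambda\).

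\emph{Part (ii).} Since \(\phi\) is bi-analytic by Lemma~\ref{lemma: bi-Lipschitz iff bi-analytic}, the map \(t\mapsto t\,\phi(t^{-1})\) is analytic on \(\R\setminus\{0\}\), being built from analytic functions by composition and multiplication; thus \(\psi\) is analytic away from the origin. It is also continuous at \(0\), because \(\lim_{t\to 0^\pm}t\,\phi(t^{-1})=\lim_{s\to\pm\infty}\phi(s)/s=\lambda=\psi(0)\) by part (i). So the whole point is analyticity at the origin. To get it, I would substitute \(t\mapsto t^{-1}\) in \(g\circ\phi=cf\) and clear denominators: since \(\phi(t^{-1})=\psi(t)/t\) for \(t\neq 0\), multiplying \(g(\psi(t)/t)=cf(t^{-1})\) by \(t^d\) yields
\begin{equation*}
\sum_{i=0}^{d} b_i\,\psi(t)^{i}\,t^{\,d-i}\;=\;c\,\widetilde f(t),\qquad t\in\R\setminus\{0\},
\end{equation*}
where \(\widetilde f(t)\coloneqq t^d f(t^{-1})=\sum_{i=0}^{d} a_{d-i}\,t^{i}\) is the reversed polynomial, so \(\widetilde f(0)=a_d\neq 0\); by continuity of \(\psi\) this identity extends to \(t=0\). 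Now set
\begin{equation*}
H(t,w)\;\coloneqq\;\sum_{i=0}^{d} b_i\,w^{i}\,t^{\,d-i}\;-\;c\,\widetilde f(t),
\end{equation*}
a polynomial in \((t,w)\) with \(H(t,\psi(t))=0\) for all \(t\in\R\) and \(H(0,\lambda)=b_d\lambda^{d}-ca_d=0\). The decisive observation is that
\begin{equation*}
\frac{\partial H}{\partial w}(0,\lambda)\;=\;d\,b_d\,\lambda^{\,d-1}\;\neq\;0,
\end{equation*}
because \(b_d\neq 0\), \(\lambda\neq 0\) and \(d\geq 1\). By the analytic implicit function theorem, the equation \(H(t,w)=0\) has, on a neighborhood of \(t=0\), a unique continuous solution \(w=\omega(t)\) with \(\omega(0)=\lambda\), and this \(\omega\) is analytic; since \(\psi\) is continuous at \(0\) with \(\psi(0)=\lambda\) and \(H(t,\psi(t))=0\), uniqueness forces \(\psi=\omega\) near \(0\). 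Hence \(\psi\) is analytic at \(0\), and therefore on all of \(\R\).

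I expect the implicit function theorem step to carry the weight of the argument. The only points requiring a little care are the sign argument upgrading \(\abs{l_+}=\abs{l_-}\) to \(l_+=l_-\) in part (i), and the verification that the continuous branch of \(\{H=0\}\) through \((0,\lambda)\) supplied by the implicit function theorem coincides with \(\psi\) — which is immediate once continuity of \(\psi\) at \(0\) with value \(\lambda\) is known. The rest is routine polynomial bookkeeping, and no genuine obstacle is anticipated.
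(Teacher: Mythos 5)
Your proof is correct and takes essentially the same approach as the paper: part (i) is the same monotonicity/sign argument, and part (ii) uses the same polynomial relation and implicit function theorem step. The only difference is presentational — you clear denominators directly to obtain your \(H(t,w)\), while the paper derives exactly the same polynomial by formally homogenizing \(g(Y)-cf(X)\) and setting \(\widetilde P(Y,Z)=P^*(1,Y,Z)\); the two are identical up to renaming of variables.
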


\begin{proof}\ 
\begin{enumerate}[label=\roman*.]
\item
Let \(f(t) = \sum_{i = 0}^d a_i t^i\) and \(g(t) = \sum_{i= 0}^d b_i t^i\), where
\(a_d,b_d\neq 0\), and let
\begin{equation*}
l_+ \coloneqq \lim_{t\to+\infty} \frac{\phi(t)}{t}
\quad\text{and}\quad
l_- \coloneqq \lim_{t\to-\infty} \frac{\phi(t)}{t}.
\end{equation*}
Both of these limits are well-defined in the extended real line because \(\phi\) is semialgebraic (see Lemma \ref{lemma: semialgebraicity of phi}). Also, since \(\phi\) is bi-Lipschitz, \(l_+\) and \(l_-\) are nonzero real numbers.
Furthermore, since \(g\circ\phi = cf\), we have \(l_+^d = l_-^d = c\cdot a_d/b_d\), which implies that \(\abs{l_+} = \abs{l_-}\). On the other hand, since \(\phi\) is monotone and 
\(\lim_{\abs{t}\to+\infty} \abs{\phi(t)} = +\infty\), either \(\phi(t)\) and \(t\) have the same sign for large values of \(\abs{t}\), or \(\phi(t)\) and \(t\) have opposite signs for large values of \(\abs{t}\). In either case, \(l_+\) and \(l_-\) have the same sign, and therefore \(l_+ = l_-\).

\item
By Lemma \ref{lemma: bi-Lipschitz iff bi-analytic}, \(\psi\) is analytic at every point \(t\in\R\setminus\{0\}\), so we only need to prove that \(\psi\) is analytic at \(t = 0\). 
Let \(P(X,Y)\coloneqq g(Y) - c f(X)\) and let \(P^*(X,Y,Z)\) be the homogeneization of \(P\). 
Also, let \(f(t) = \sum_{i=0}^d a_it^i\) and \(g(t) = \sum_{i=0}^d b_it^i\), where 
\(a_d,b_d\neq 0\) and \(d\geq 1\). Thus,
\begin{equation*}
P(X,Y) = \sum_{i=0}^d b_iY^i - c\cdot\sum_{i=0}^d a_iX^i
\quad\text{and}\quad
P^*(X,Y,Z) = \sum_{i=0}^d b_iY^iZ^{d-i} - c\cdot\sum_{i=0}^d a_iX^iZ^{d-i}.
\end{equation*}

Since \(g(\phi(t)) = c f(t)\), we have \(P(t,\phi(t)) = 0\) for all \(t\in\R\).
Equivalently, \(P^*(t,\phi(t),1) = 0\) \,for all \(t\in\R\).
Since \(P^*\) is a homogeneous polynomial, it follows that 
\(P^*(1,\phi(t)/t, 1/t) = 0\) \,for all \(t\in\R\setminus\{0\}\).
Equivalently,
\begin{equation*}
P^*(1,t\phi(t^{-1}), t) = 0 \quad\text{for all } t\in\R\setminus\{0\}.
\end{equation*}

Let \(\widetilde{P}(Y,Z)\coloneqq P^*(1,Y,Z)\). From the computations above, it follows that
\begin{equation*}
\widetilde{P}(\psi(t),t) = 0\quad\text{for all }t\in\R.
\end{equation*}

Since \(\widetilde{P}(\lambda,0) = 0\) and 
\(\frac{\partial\widetilde{P}}{\partial y}(\lambda,0) = 
d\cdot b_d\cdot \lambda^{d-1}\neq 0\), the Implicit Function Theorem guarantees that there exists an analytic function \(\widetilde{\psi}\from I\to J\) from an open interval \(I\) containing \(0\) to an open interval \(J\) containing \(\lambda\) such that 
\begin{equation*}
\widetilde{P}(y,z) = 0 \iff y = \widetilde{\psi}(z),
\text{ for all }y\in J, z\in I.
\end{equation*}
Since \(\psi\) is continuous and \(\psi(0) = \lambda\), there exists an open interval \(I_0\subseteq I\) containing \(0\) such that \(\psi(t)\in J\) for all \(t\in I_0\). And since \(\widetilde{P}(\psi(t),t) = 0\) for all \(t\in I_0\), it follows that 
\(\psi(t) = \widetilde{\psi}(t)\) for all \(t\in I_0\). Hence, \(\psi\) is analytic at \(t = 0\).
\end{enumerate}
\end{proof}

\begin{proposition}
\label{prop: asymptotic formula for phi}
Let \(\phi\from\R\to\R\) be a bi-Lipschitz function that satisfies an equation of the form \(g\circ \phi = c f\), where \(f,g\from\R\to\R\) are nonconstant polynomial functions of the same degree, and \(c > 0\) is a constant. Then there exist \(\lambda,k\in\R\), with \(\lambda\neq 0\), such that
\begin{equation}
\label{eq: asymptotic formula for phi}
\phi(t) = (\lambda t + k) + \alpha(t),
\end{equation}
where \(\alpha\from\R\to\R\) is a Lipschitz analytic function such that:
\begin{enumerate}[label = \roman*.]
\item \(\lim_{\abs{t}\to+\infty} \alpha(t) = 0\)
\item \(\alpha\) can be analytically extended to 
\(\R\cup\{\infty\}\cong {\mathbb P}^1(\R)\).
\end{enumerate}
\end{proposition}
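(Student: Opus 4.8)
The plan is to read everything off from Lemma \ref{lemma: analytic extension of phi(t)/t}. That lemma supplies a nonzero real number \(\lambda\) with \(\lim_{t\to+\infty}\phi(t)/t = \lim_{t\to-\infty}\phi(t)/t = \lambda\), and it tells us that the function \(\psi\from\R\to\R\) given by \(\psi(t) = t\phi(t^{-1})\) for \(t\neq 0\) and \(\psi(0) = \lambda\) is analytic on \(\R\). Since \(\psi\) is analytic at \(0\), the first step is to expand it in a Taylor series there: there are \(\epsilon > 0\), the real number \(k\coloneqq\psi'(0)\), and an analytic function \(h\from(-\epsilon,\epsilon)\to\R\) with \(\psi(t) = \lambda + kt + t^2 h(t)\) for \(\abs{t}<\epsilon\).

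Next I would define \(\alpha\from\R\to\R\) by \(\alpha(t)\coloneqq\phi(t) - (\lambda t + k)\), which is exactly the decomposition in \eqref{eq: asymptotic formula for phi}. Two easy facts are recorded first: \(\phi\) is bi-Lipschitz by hypothesis and, since \(f\) and \(g\) have the same degree, it is analytic by Lemma \ref{lemma: bi-Lipschitz iff bi-analytic}; hence \(\alpha\) is a Lipschitz analytic function. The heart of the argument is the identity \(\phi(t) = t\,\psi(t^{-1})\) for \(t\neq 0\), which is just the definition of \(\psi\) read backwards. Substituting the Taylor expansion, for \(\abs{t} > \epsilon^{-1}\) we get \(\phi(t) = t\bigl(\lambda + kt^{-1} + t^{-2}h(t^{-1})\bigr) = \lambda t + k + t^{-1}h(t^{-1})\), so that
\[
\alpha(t) = t^{-1}h(t^{-1}) \qquad\text{for }\abs{t}>\epsilon^{-1}.
\]

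From this identity, claim (i) is immediate: as \(\abs{t}\to+\infty\) we have \(t^{-1}\to 0\) and \(h(t^{-1})\to h(0)\), so \(\alpha(t)\to 0\). For claim (ii), I would exhibit the analytic extension explicitly in the chart at infinity: set \(\beta(s)\coloneqq s\,h(s)\) for \(\abs{s}<\epsilon\). Then \(\beta\) is analytic (a product of analytic functions), \(\beta(0) = 0\), and \(\beta(t^{-1}) = \alpha(t)\) for \(\abs{t} > \epsilon^{-1}\) by the displayed identity. Together with the analyticity of \(\alpha\) on \(\R\), this shows that \(\alpha\) extends to an analytic function on \(\R\cup\{\infty\}\cong{\mathbb P}^1(\R)\), taking the value \(\beta(0) = 0\) at \(\infty\).

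I do not expect a serious obstacle once Lemma \ref{lemma: analytic extension of phi(t)/t} is in hand; the only points needing care are (a) getting the substitution \(\phi(t) = t\,\psi(t^{-1})\) and the bookkeeping of powers of \(t\) right, and (b) being precise about what ``analytic extension to \({\mathbb P}^1(\R)\)'' means — namely that \(\alpha\) is analytic on the affine chart and that its transport \(s\mapsto\alpha(s^{-1})\) through the chart at infinity extends analytically across \(s = 0\), which is exactly what \(\beta\) provides.
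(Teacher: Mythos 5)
Your proof is correct and follows essentially the same route as the paper: both start from Lemma~\ref{lemma: analytic extension of phi(t)/t}, set \(k = \psi'(0)\), define \(\alpha(t) = \phi(t) - (\lambda t + k)\), and use the relation \(\phi(t) = t\psi(t^{-1})\) together with the Taylor expansion of \(\psi\) at \(0\) to establish both (i) and the analytic extension across \(\infty\). The only superficial difference is that you package the tail of the Taylor series as \(t^2 h(t)\) while the paper writes out the power series coefficients explicitly; the resulting chart-at-infinity function \(\beta(s)=s\,h(s)\) is the same as the paper's \(\widehat{\alpha}\) near \(\infty\).
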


\begin{proof}
Let \(\lambda\in\R\setminus\{0\}\) and \(\psi\from\R\to\R\) be as in Lemma \ref{lemma: analytic extension of phi(t)/t}. We prove that 
\(\lim_{\abs{t}\to+\infty} \phi(t) - \lambda t\) is a well-defined real number.
For \(t\neq 0\), we have
\begin{equation*}
\phi(t) - \lambda t = t\psi(t^{-1}) - \lambda t = 
\frac{\psi(t^{-1}) - \lambda}{t^{-1}}.
\end{equation*}
Hence,
\begin{equation*}
\lim_{\abs{t}\to+\infty} \phi(t) - \lambda t  
    = \lim_{t\to 0}\phi(t^{-1}) - \lambda t^{-1}
    = \lim_{t\to 0}\frac{\psi(t) - \lambda}{t}
    = \psi^\prime(0)\in\R.\\[8pt]
\end{equation*}

Now, let \(k\coloneqq \lim_{\abs{t}\to+\infty} \phi(t) - \lambda t\).
Obviously, the only function \(\alpha\from\R\to\R\) satisfying 
(\ref{eq: asymptotic formula for phi}) is the one given by
\(\alpha(t)\coloneqq \phi(t) - (\lambda t + k)\),
which is a Lipschitz analytic function (because it is the difference of two Lipschitz analytic functions). From the definition of \(k\), it is immediate that 
\(\lim_{\abs{t}\to+\infty} \alpha(t) = 0\).
It remains to show that \(\alpha\) can be analytically extended to 
\(\R\cup\{\infty\}\cong {\mathbb P}^1(\R)\). 

For \(t\in\R\setminus\{0\}\), we have
\begin{align*}
\alpha(t^{-1}) &= \phi(t^{-1}) - (\lambda t^{-1} + k)\\[3pt]
&= (\psi(t) - \lambda)\cdot t^{-1} - k
\end{align*}
On the other hand, since \(\psi(0) = \lambda\) and \(\psi^{\prime}(0) = k\), we have
\begin{equation*}
\psi(t) = \lambda + kt + \sum_{k = 2}^{\infty} c_kt^k, 
\text{ for \(\abs{t}\) sufficiently small.}
\end{equation*}
Hence,
\begin{equation*}
\alpha(t^{-1}) = \sum_{k = 1}^{\infty} c_{k+1} t^k,
\text{ for \(\abs{t} > 0\) sufficiently small.}
\end{equation*}
Therefore, the function
\(\widehat{\alpha}\from\R\cup\{\infty\}\to\R\) defined by
\begin{equation*}
\widehat{\alpha}(t)\coloneqq
\begin{cases}
\alpha(t), &\text{if \,}t\in\R,\\
0, &\text{if \,}t = \infty
\end{cases}
\end{equation*}
is analytic.
\end{proof}

\section{\({\cal R}\)-Semialgebraic Lipschitz equivalence of \(\beta\)-qua\-si\-ho\-mo\-ge\-ne\-ous polynomials}
\label{section: R-semialg Lip equiv of beta-qh polynomials}

\begin{definition}
\label{def: beta-quasihomogeneous polynomial}
Let \(\beta\)\/ be a rational number greater than \(1\) and let \(d\)\/ be a positive integer. A polynomial \(F(X,Y)\in\R[X,Y]\) is said to be 
{\it \(\beta\)-quasihomogeneous of degree \(d\)}\/ if 
\begin{equation*}
F(tX,t^\beta Y) = t^d F(X,Y), \text{ for all }t > 0.
\end{equation*}
The positive integer \(d\)\/ is called the {\it \(\beta\)-quasihomogeneous degree} of \(F\).
\end{definition}

 \begin{remark}
According to Definition \ref{def: beta-quasihomogeneous polynomial}, the zero polynomial is \(\beta\)-quasihomogeneous of degree \(d\), for all rational numbers \(\beta > 1\) and all positive integers \(d\). However, throughout this paper, the term \(\beta\)-quasihomogeneous polynomial will always refer to a nonzero \(\beta\)-quasihomogeneous polynomial.
\end{remark}

\begin{remark}
If \(\beta = r/s\), where \(r > s > 0\) and \(\gcd (r,s) = 1\), then
the \(\beta\)-quasihomogeneous polynomials of degree \(d\) are those of the form 
\begin{equation*}
F(X,Y) = \sum_{k = 0}^m c_k X^{d - rk}Y^{sk},
\end{equation*}
where the coefficients \(c_k\) are real numbers, \(c_m\neq 0 \), and 
\(0\leq m\leq \lfloor d/r\rfloor\).
\end{remark} 

In this section, we address the problem of determining whether any two given 
\(\beta\)-quasihomogeneous polynomials are \({\cal R}\)-semialgebraically Lipschitz equivalent. 

\begin{definition}
Two \(\beta\)-quasihomogeneous polynomials
\(F(X,Y)\) and \(G(X,Y)\) are said to be {\it \({\cal R}\)-semialgebraically Lipschitz equivalent}\/ if there exists a germ of semialgebraic bi-Lipschitz homeomorphism \(\Phi\from(\R^2,0)\to(\R^2,0)\) such that \(G\circ\Phi = F\).
\end{definition}

Following the approach taken in \cite{BFP}, we reduce this problem to the simpler one of determining whether two given polynomial functions of a single variable are Lipschitz equivalent. In order to do this, we associate with each \(\beta\)-quasihomogeneous polynomial \(F(X,Y)\) two polynomial functions of a single variable, called its {\it height functions}.

\begin{definition}
Given a \(\beta\)-quasihomogeneous polynomial \(F(X,Y)\), the right height function and the left height function of \(F(X,Y)\) are defined to be, respectively, the functions \(f_+\from\R\to\R\), given by \(f_+(t) \coloneqq F(1,t)\), and \(f_-\from\R\to\R\), given by \(f_-(t) \coloneqq F(-1,t)\).\end{definition}

\begin{remark}
For any \(\beta\)-quasihomogeneous polynomial \(F(X,Y)\), we have:
\begin{equation*}
F(x,t\abs{x}^\beta) = 
\begin{cases}
\abs{x}^d f_+(t), &\text{if }\, x > 0,\\
\abs{x}^d f_-(t), &\text{if }\, x < 0.
\end{cases}
\end{equation*}
Also, note that \(\{(x,t\abs{x}^\beta): x, t\in\R\} = \R^2\).
\end{remark}

In this section, we establish conditions under which questions \ref{question 1} and \ref{question 2}, stated in Section \ref{section: intro}, are answered affirmatively. Thus, under appropriate conditions, we reduce the problem of determining whether two given \(\beta\)-quasihomogeneous polynomials are \({\cal R}\)-semialgebraically Lipschitz equivalent to determining whether their height functions can be arranged into pairs of Lipschitz equivalent functions. Since the problem of determining whether two given polynomial functions of a single variable are Lipschitz equivalent was already solved in Section \ref{subsection: Lipschitz equivalence of polynomial functions of a single variable}, this enables us to determine, under suitable conditions, whether two given \(\beta\)-quasihomogeneous polynomials are  
\({\cal R}\)-semialgebraically Lipschitz equivalent.

\subsection{\(\beta\)-regular germs and the \(\beta\)-transform}
\label{subsection: beta-isomorphisms and the beta-transform}

In this subsection, we provide conditions under which the \({\cal R}\)-semialgebraic Lipschitz equivalence of two \(\beta\)-quasihomogeneous polynomials of degree \(d\) implies that their height functions can be arranged into pairs of Lipschitz equivalent functions (see Theorem \ref{thm: equivalent polynomials, Lipschitz equivalent height functions} at the end of this section).

\begin{definition}
A germ of semialgebraic bi-Lipschitz homeomorphism \(\Phi = (\Phi_1,\Phi_2)\from (\R^2,0)\to(\R^2,0)\) is said to be \(\beta\)-regular if it satisfies the following conditions:
\begin{enumerate}[label = \roman*.]
\item \(\lim_{x\to 0^+}\Phi_1(x,0)/x\neq 0\) \ and 
\ \(\lim_{x\to 0^-}\Phi_1(x,0)/x\neq 0\).
\item For each \(t\in\R\), \(\Phi_2(x,t\abs{x}^\beta) = O(\abs{x}^\beta)\) as \(x\to 0\).
\end{enumerate}
\end{definition}

\begin{remark}
The limits in condition \((i)\) above are finite because \(\Phi\) is Lipschitz.
\end{remark}

\begin{remark}
\label{rk: equality of initial velocities}
Let \(\Phi\from (\R^2,0)\to(\R^2,0)\) be any germ of semialgebraic bi-Lipschitz map. Since semialgebraic Lipschitz maps transform paths with the same initial velocity into paths with the same initial velocity, it follows that for all \(t\in\R\), we have:
\begin{enumerate}[label=\roman*.]
\item The initial velocity of the path 
\(\tilde\gamma_+(x) = \Phi(x,tx^\beta)\), \(0\leq x < \epsilon\),
is equal to the initial velocity of the path
\(\tilde\alpha_+(x) = \Phi(x,0)\), \(0\leq x <\epsilon\).
\item The initial velocity of the path 
\(\tilde\gamma_-(x) = \Phi(-x,tx^\beta)\), \(0\leq x < \epsilon\),
is equal to the initial velocity of the path
\(\tilde\alpha_-(x) = \Phi(-x,0)\), \(0\leq x <\epsilon\).
\end{enumerate}
Hence, for all \(t\in\R\), 
\begin{equation}
\label{eq: equality of first components of initial velocities}
\lim_{x\to 0^+}\frac{\Phi_1(x,tx^\beta)}{x} = 
\lim_{x\to 0^+}\frac{\Phi_1(x,0)}{x}
\quad\text{and}\quad
\lim_{x\to 0^+}\frac{\Phi_1(-x,tx^\beta)}{x} = 
\lim_{x\to 0^+}\frac{\Phi_1(-x,0)}{x}\,.
\end{equation}
Also, for all \(t\in\R\), 
\begin{equation}
\label{eq: equality of second components of initial velocities}
\lim_{x\to 0^+}\frac{\Phi_2(x,tx^\beta)}{x} = 
\lim_{x\to 0^+}\frac{\Phi_2(x,0)}{x}
\quad\text{and}\quad
\lim_{x\to 0^+}\frac{\Phi_2(-x,tx^\beta)}{x} = 
\lim_{x\to 0^+}\frac{\Phi_2(-x,0)}{x}\,.
\end{equation}
\end{remark}

\begin{proposition}
\label{prop: opposite horizontal initial velocities (beta-isomorphism)}
If\/ \(\Phi\from (\R^2,0)\to(\R^2,0)\) is a \(\beta\)-regular germ of semialgebraic bi-Lipschitz homemomorphism then the initial velocities of the paths \(\tilde\alpha_+(x) = \Phi(x,0)\), \(0\leq x < \epsilon\), and \(\tilde\alpha_-(x) = \Phi(-x,0)\), \(0\leq x < \epsilon\), are horizontal\/\footnote{We say that a vector 
\((v_1,v_2)\in\R^2\) is {\it horizontal} if \(v_1\neq 0\) and \(v_2 = 0\).} and have opposite directions.
\end{proposition}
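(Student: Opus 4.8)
The plan is to work directly with the two candidate initial velocity vectors
\[
v_+ \from= \lim_{x\to 0^+}\frac{\Phi(x,0)}{x},
\qquad
v_- \from= \lim_{x\to 0^+}\frac{\Phi(-x,0)}{x}.
\]
First I would check these limits exist and are finite nonzero vectors. Fixing a semialgebraic representative of the germ $\Phi$, each coordinate function $x\mapsto\Phi_i(x,0)$ is a one-variable semialgebraic function near $0$, so $\Phi_i(x,0)/x$ has a limit in $[-\infty,+\infty]$ as $x\to 0^+$; this limit is finite because $\Phi$ is Lipschitz, and $\abs{v_\pm}\geq 1/L>0$ (where $L$ is a bi-Lipschitz constant) because $\abs{\Phi(z)}\geq\abs{z}/L$. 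Since $\Phi(0,0)=0$, the vectors $v_+$ and $v_-$ are exactly the initial velocities of $\tilde\alpha_+$ and $\tilde\alpha_-$.

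Second, I would establish horizontality using the two defining conditions of $\beta$-regularity. The second coordinate of $v_+$ is $\lim_{x\to 0^+}\Phi_2(x,0)/x$; applying condition (ii) of $\beta$-regularity with $t=0$ gives $\Phi_2(x,0)=O(\abs{x}^\beta)$ as $x\to 0$, hence $\Phi_2(x,0)/x=O(\abs{x}^{\beta-1})\to 0$ since $\beta>1$. The same computation (again using condition (ii) with $t=0$, now as $x\to 0^-$) shows the second coordinate of $v_-$ is $0$. As $v_+$ and $v_-$ are nonzero with vanishing second coordinate, both are horizontal; equivalently, the nonvanishing of the first coordinate is condition (i) of $\beta$-regularity.

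Third — the crux — I would show $v_+$ and $v_-$ have opposite directions, and this is exactly where the bi-Lipschitz hypothesis is used. Write $v_+=(\lambda,0)$ and $v_-=(\mu,0)$ with $\lambda,\mu\neq 0$, and suppose toward a contradiction that $\lambda$ and $\mu$ have the same sign, so $\lambda/\mu>0$. For small $x>0$ put $x'\from=(\lambda/\mu)\,x>0$, so $x'\to 0^+$ as $x\to 0^+$ and $\lambda x-\mu x'=0$. From $\Phi(x,0)=x v_+ + o(x)$ and $\Phi(-x',0)=x' v_- + o(x')=x v_+ + o(x)$ (the last equality because $x'v_-=(\lambda/\mu)x\,(\mu,0)=(\lambda x,0)=xv_+$ and $x'=O(x)$), the leading terms cancel, giving $\abs{\Phi(x,0)-\Phi(-x',0)}=o(x)$. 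But the bi-Lipschitz inequality yields $\abs{\Phi(x,0)-\Phi(-x',0)}\geq L^{-1}\abs{(x,0)-(-x',0)}=L^{-1}(x+x')\geq L^{-1}x$, which is incompatible with the previous estimate once $x$ is small enough. Hence $\lambda$ and $\mu$ have opposite signs; since $v_+$ is then a positive multiple of $-v_-$, the two initial velocities have opposite directions.

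The only genuine obstacle is the third step, and it is conceptual rather than computational: one must realize that "opposite directions'' is forced precisely because $\Phi$ cannot contract distances by more than a bounded factor — a straight segment through $0$ cannot be carried by a bi-Lipschitz homeomorphism to an arc with a cusp at $0$, since the two points $(x,0)$ and $(-x',0)$, which lie at distance comparable to $x$, would be sent to two points at distance $o(x)$ on a common small circle about the origin. Steps one and two are routine and use only semialgebraicity (for existence of the limits) together with the two conditions in the definition of $\beta$-regular germ.
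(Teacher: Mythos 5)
Your proof is correct and follows the same overall structure as the paper's: establish existence and non-vanishing of the initial velocities via semialgebraicity and bi-Lipschitzness, use condition (ii) of \(\beta\)-regularity at \(t=0\) to kill the second coordinates (hence horizontality), and then invoke bi-Lipschitzness to rule out the two horizontal velocities pointing the same way. The one place you go further than the paper is in the third step: where the paper simply asserts that the initial velocities cannot share a direction "because the initial velocities of \(\alpha_+\) and \(\alpha_-\) do not have the same direction and \(\Phi\) is bi-Lipschitz," you supply the actual quantitative contradiction — choosing \(x'=(\lambda/\mu)x\) so that the leading terms of \(\Phi(x,0)\) and \(\Phi(-x',0)\) cancel, giving \(\abs{\Phi(x,0)-\Phi(-x',0)}=o(x)\) against the lower bound \(L^{-1}(x+x')\) — which is a legitimate and welcome filling-in of the step the paper leaves implicit.
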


\begin{proof}
Let \(\Phi\) be a \(\beta\)-regular germ of semialgebraic bi-Lipschitz homemomorphism. Then, in particular, we have \(\Phi_2(x,0) = O(\abs{x}^\beta)\) as \(x\to 0\). Consequently, 
\(\lim_{x\to 0^+}\Phi_2(x,0)/x = \lim_{x\to 0^+}\Phi_2(-x,0)/x = 0\). Since the paths \(\tilde\alpha_+(x) = \Phi(x,0)\), \(0\leq x <\epsilon\), and \(\tilde\alpha_-(x) = \Phi(-x,0)\), \(0\leq x <\epsilon\), both have nonzero finite initial velocity (because \(\Phi\) is bi-Lipschitz), it follows that both of them have horizontal initial velocity. Furthermore, the initial velocities \(\tilde\alpha_+^\prime(0)\) and \(\tilde\alpha_-^\prime(0)\) do not have the same direction because the initial velocities of the paths \(\alpha_+(x) = (x,0)\), \(0\leq x <\epsilon\), and \(\alpha_-(x) = (-x,0)\), \(0\leq x <\epsilon\), do not have the same direction and \(\Phi\) is bi-Lipschitz. Hence the result.
\end{proof}

It follows from Proposition \ref{prop: opposite horizontal initial velocities (beta-isomorphism)} that each \(\beta\)-regular germ of semialgebraic bi-Lipschitz homeomorphism \(\Phi = (\Phi_1,\Phi_2)\from(\R^2,0)\to(\R^2,0)\) satisfies exactly one of the following conditions:
\begin{enumerate}[label = (\Alph*)]
\item \label{condition: direct beta-isomorphism}
\(\lim_{x\to 0^+} \Phi_1(x,0)/x > 0\)
\,and \,\,\(\lim_{x\to 0^-} \Phi_1(x,0)/x > 0\);
\item \label{condition: reverse beta-isomorphism}
 \(\lim_{x\to 0^+} \Phi_1(x,0)/x < 0\)
\,and \,\,\(\lim_{x\to 0^-} \Phi_1(x,0)/x < 0\).
\end{enumerate}

\begin{definition}
A \(\beta\)-regular germ of semialgebraic bi-Lipschitz homeomorphism \(\Phi\from(\R^2,0)\to(\R^2,0)\) is said to be {\it direct} if it satisfies 
\ref{condition: direct beta-isomorphism}, and it is said to be {\it reverse}
if it satisfies \ref{condition: reverse beta-isomorphism}.
\end{definition}

\begin{definition}
Given a \(\beta\)-regular germ of semialgebraic bi-Lipschitz homeomorphism 
\(\Phi = (\Phi_1,\Phi_2)\from (\R^2,0) \to (\R^2,0)\),
we define
\(\lambda \coloneqq (\lambda_+,\lambda_-)\) 
and 
\(\phi \coloneqq (\phi_+,\phi_-)\),
where:
\begin{align*}
\lambda_+ \coloneqq \lim_{x\to 0^+}\frac{\Phi_1(x,0)}{x}\,,& \quad
\lambda_- \coloneqq \lim_{x\to 0^-}\frac{\Phi_1(x,0)}{x}\\[7pt]
\phi_+(t) \coloneqq \abs{\lambda_+}^{-\beta}\cdot
          \lim_{x\to 0^+}\frac{\Phi_2(x,t\abs{x}^\beta)}{\abs{x}^\beta}\,,&\quad
\phi_-(t) \coloneqq \abs{\lambda_-}^{-\beta}\cdot
          \lim_{x\to 0^-}\frac{\Phi_2(x,t\abs{x}^\beta)}{\abs{x}^\beta}          
\end{align*}
The ordered pair \((\lambda,\phi)\) is called the \(\beta\)-transform of \(\Phi\). 
\end{definition}

From the definition above, it follows that, for each \(t\in\R\):
\begin{align}
\label{eq: Phi1 asymptotic formula}
\Phi_1(x,t\abs{x}^\beta) &= 
          \lambda x + o(x) \quad \text{as } x\to 0\\
\label{eq: Phi2 asymptotic formula}
\Phi_2(x,t\abs{x}^\beta) &= 
          \abs{\lambda}^\beta\phi(t)\abs{x}^\beta + o(\abs{x}^\beta) 
          \quad \text{as } x\to 0          
\end{align}
where
\begin{equation*}
\begin{cases}
\lambda = \lambda_+ \,\text{ and } \,\phi = \phi_+, &\text{if } \,x > 0,\\
\lambda = \lambda_- \,\text{ and } \,\phi = \phi_-, &\text{if } \,x < 0.
\end{cases}
\end{equation*}

\begin{proposition}
Let \(\Phi = (\Phi_1,\Phi_2)\from (\R^2,0)\to(\R^2,0)\) and 
\(\Psi = (\Psi_1,\Psi_2)\from (\R^2,0)\to(\R^2,0)\) be 
\(\beta\)-regular germs of semialgebraic bi-Lipschitz homeomorphisms, and let \(Z \coloneqq \Psi\circ\Phi\).
Let \((\lambda,\phi)\), \((\mu,\psi)\) be the 
\(\beta\)-transforms of\/ \(\Phi\), \(\Psi\), respectively. Then the following holds:
\begin{enumerate}[label=\roman*.]
\item Asymptotic formula for \(Z_1(x,0)\).
\begin{equation}
\label{eq: asymptotic formula for Z1}
Z_1(x,0) = \lambda\mu x + o(x) \,\text{ as } x\to 0,
\end{equation}
where 
\begin{equation*}
\begin{cases}
\lambda = \lambda_+,\, \mu = \mu_+, & \text{if } \,x > 0,\\
\lambda = \lambda_-,\, \mu = \mu_-, & \text{if } \,x < 0\\
\end{cases}
\quad\text{ or }\quad
\begin{cases}
\lambda = \lambda_+,\, \mu = \mu_-, & \text{if } \,x > 0,\\
\lambda = \lambda_-,\, \mu = \mu_+, & \text{if } \,x < 0,\\
\end{cases}
\end{equation*}
depending on whether \(\Phi\) is direct or reverse, respectively.

\item Asymptotic formula for \(Z_2(x,t\abs{x}^\beta)\), where \(t\) is fixed. 
\begin{equation}
\label{eq: asymptotic formula for Z2}
Z_2(x,t\abs{x}^\beta) = 
\abs{\lambda\mu}^\beta\psi(\phi(t))\abs{x}^\beta + o(\abs{x}^\beta) 
\,\text{ as } x\to 0,
\end{equation} 
where
\begin{equation*}
\begin{cases}
\lambda = \lambda_+,\, \mu = \mu_+,\, \phi = \phi_+,\, \psi = \psi_+, 
& \text{if } \,x > 0,\\
\lambda = \lambda_-,\, \mu = \mu_-,\, \phi = \phi_-,\, \psi = \psi_-, 
& \text{if } \,x < 0\\
\end{cases}
\end{equation*}
\quad\text{ or }\quad
\begin{equation*}
\begin{cases}
\lambda = \lambda_+,\, \mu = \mu_-,\, \phi = \phi_+,\, \psi = \psi_-, 
& \text{if } \,x > 0,\\
\lambda = \lambda_-,\, \mu = \mu_+,\, \phi = \phi_-,\, \psi = \psi_+, 
& \text{if } \,x < 0,\\
\end{cases}
\end{equation*}
depending on whether \(\Phi\) is direct or reverse, respectively.
\end{enumerate}
\end{proposition}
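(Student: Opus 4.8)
The plan is to substitute the asymptotic expansions \eqref{eq: Phi1 asymptotic formula} and \eqref{eq: Phi2 asymptotic formula} for $\Phi$ into the corresponding expansions for $\Psi$, using that $Z = \Psi\circ\Phi$ means $Z_i(x,y) = \Psi_i\big(\Phi_1(x,y),\Phi_2(x,y)\big)$. I would in fact compute $Z_1(x,t|x|^\beta)$ and $Z_2(x,t|x|^\beta)$ simultaneously for an arbitrary fixed $t\in\R$; part (i) is then the specialization $t=0$. Fix $t$, write $u := \Phi_1(x,t|x|^\beta)$ and $v := \Phi_2(x,t|x|^\beta)$, and let $\lambda,\phi$ stand for $\lambda_+,\phi_+$ if $x>0$ and $\lambda_-,\phi_-$ if $x<0$. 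By \eqref{eq: Phi1 asymptotic formula}, $u = \lambda x + o(x)$ with $\lambda\neq 0$; in particular, as $x\to 0^+$ and as $x\to 0^-$ separately, $u$ keeps a constant sign, $|u|\asymp|x|$, and every $o(x)$ (resp.\ $o(|x|^\beta)$) term is also $o(u)$ (resp.\ $o(|u|^\beta)$), and conversely. Raising $u = \lambda x + o(x)$ to the power $\beta$ gives $|x|^\beta = |\lambda|^{-\beta}|u|^\beta + o(|u|^\beta)$, and plugging this into \eqref{eq: Phi2 asymptotic formula} yields the key rewriting
\[
v = |\lambda|^\beta\phi(t)\,|x|^\beta + o(|x|^\beta) = \phi(t)\,|u|^\beta + o(|u|^\beta),
\]
so the image point $(u,v)$ lies within $o(|u|^\beta)$ of the curve $s\mapsto\big(s,\phi(t)|s|^\beta\big)$ on which $\Psi$'s asymptotic formulas are stated.

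The main obstacle is exactly that $v$ differs from $\phi(t)|u|^\beta$ by a (nonzero) error $o(|u|^\beta)$, so the formulas \eqref{eq: Phi1 asymptotic formula}--\eqref{eq: Phi2 asymptotic formula} applied to $\Psi$ cannot be invoked verbatim at $(u,v)$. This is resolved by the Lipschitz property of $\Psi$: if $L$ is a Lipschitz constant for $\Psi$ near the origin, then for $i=1,2$,
\[
\big|\Psi_i(u,v) - \Psi_i\big(u,\phi(t)|u|^\beta\big)\big| \le L\,\big|v - \phi(t)|u|^\beta\big| = o(|u|^\beta),
\]
so $Z_i(x,t|x|^\beta) = \Psi_i\big(u,\phi(t)|u|^\beta\big) + o(|u|^\beta)$. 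Now I apply the formulas \eqref{eq: Phi1 asymptotic formula}--\eqref{eq: Phi2 asymptotic formula} for $\Psi$ (which has $\beta$-transform $(\mu,\psi)$), with $u$ playing the role of the variable and the fixed parameter $\phi(t)$ playing the role of $t$: this gives $\Psi_1\big(u,\phi(t)|u|^\beta\big) = \mu u + o(u)$ and $\Psi_2\big(u,\phi(t)|u|^\beta\big) = |\mu|^\beta\psi(\phi(t))\,|u|^\beta + o(|u|^\beta)$, where $\mu,\psi$ denote $\mu_+,\psi_+$ or $\mu_-,\psi_-$ according to the sign of $u$. Since $\beta>1$, the leftover $o(|u|^\beta)$ in the first coordinate is $o(u)$ and is absorbed; re-expanding $u = \lambda x + o(x)$ and $|u|^\beta = |\lambda|^\beta|x|^\beta + o(|x|^\beta)$ produces $Z_1(x,t|x|^\beta) = \lambda\mu\,x + o(x)$ and $Z_2(x,t|x|^\beta) = |\lambda\mu|^\beta\,\psi(\phi(t))\,|x|^\beta + o(|x|^\beta)$.

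It remains to pin down which branch of $(\mu,\psi)$ is selected, i.e.\ the sign of $u$. Since $\Phi$ is $\beta$-regular, Proposition \ref{prop: opposite horizontal initial velocities (beta-isomorphism)} and the definition of \emph{direct}/\emph{reverse} show that the leading coefficient $\lambda$ of $u$ is positive on both sides if $\Phi$ is direct and negative on both sides if $\Phi$ is reverse; hence $\operatorname{sign}(u) = \operatorname{sign}(x)$ when $\Phi$ is direct and $\operatorname{sign}(u) = -\operatorname{sign}(x)$ when $\Phi$ is reverse, while the second argument $v = o(u)$ never affects the selection. Therefore $(\mu,\psi) = (\mu_+,\psi_+)$ exactly when $u>0$, and reading off the four cases ($\Phi$ direct/reverse, $x>0$/$x<0$) gives precisely the two case tables in the statement for $Z_2$, and likewise for $Z_1$ the pairing of $\lambda_\pm$ with the appropriate $\mu_\pm$. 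Finally, specializing to $t=0$: here $v = \Phi_2(x,0) = O(|x|^\beta) = o(u)$, so the computation of $Z_1$ goes through unchanged and yields $Z_1(x,0) = \lambda\mu\,x + o(x)$ with the same sign analysis, which is assertion (i). All $o(\cdot)$ estimates above are understood as $x\to 0^+$ and as $x\to 0^-$ separately, which is all that is claimed.
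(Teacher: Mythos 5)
Your proposal is correct and follows essentially the same route as the paper: substitute the asymptotic formulas \eqref{eq: Phi1 asymptotic formula} and \eqref{eq: Phi2 asymptotic formula} for \(\Phi\) into those for \(\Psi\), use the Lipschitz property of \(\Psi\) to absorb the second-coordinate error \(o(\abs{u}^\beta)\), and read off the branch of \((\mu,\psi)\) from the sign of \(u=\Phi_1\). The only cosmetic difference is that you compute both coordinates at a general point \((x,t\abs{x}^\beta)\) and specialize to \(t=0\) for part (i), and you make the Lipschitz estimate explicit, whereas the paper treats the two parts separately and leaves that step implicit.
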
 

\begin{proof}
By definition,
\begin{equation*}
Z_1(x,0) = \Psi_1(\Phi(x,0)).
\end{equation*}
Then, by (\ref{eq: Phi1 asymptotic formula}) and (\ref{eq: Phi2 asymptotic formula}),
\begin{equation*}
Z_1(x,0) = \Psi_1(\lambda x, \abs{\lambda}^\beta\phi(0)\abs{x}^\beta) + o(x),
\end{equation*}
where
\begin{equation}
\label{eq: lambda and phi - first time}
\begin{cases}
\lambda = \lambda_+ \,\text{ and } \,\phi = \phi_+, &\text{if } \,x > 0,\\
\lambda = \lambda_- \,\text{ and } \,\phi = \phi_-, &\text{if } \,x < 0.
\end{cases}
\end{equation}
And then, by applying (\ref{eq: Phi1 asymptotic formula}) to \(\Psi_1\) in the equation above, we obtain:
\begin{equation*}
Z_1(x,0) = \lambda\mu x + o(x),
\end{equation*}
where
\begin{equation}
\label{eq: mu}
\begin{cases}
\mu = \mu_+, &\text{if } \,\lambda x > 0,\\
\mu = \mu_-, &\text{if } \,\lambda x < 0.\\
\end{cases}
\end{equation}
Since
\begin{equation*}
\begin{cases}
\lambda_+ > 0 \text{ and } \lambda_- > 0, &\text{if \(\Phi\) is direct}, \\
\lambda_+ < 0 \text{ and } \lambda_- < 0, &\text{if \(\Phi\) is reverse}, \\
\end{cases}
\end{equation*}
it follows from (\ref{eq: lambda and phi - first time}) and (\ref{eq: mu}) that
\begin{equation*}
\begin{cases}
\lambda = \lambda_+,\, \mu = \mu_+, & \text{if } \,x > 0,\\
\lambda = \lambda_-,\, \mu = \mu_-, & \text{if } \,x < 0\\
\end{cases}
\quad\text{ or }\quad
\begin{cases}
\lambda = \lambda_+,\, \mu = \mu_-, & \text{if } \,x > 0,\\
\lambda = \lambda_-,\, \mu = \mu_+, & \text{if } \,x < 0,\\
\end{cases}
\end{equation*}
depending on whether \(\Phi\) is direct or reverse, respectively.

Now, let \(t\in\R\) be fixed. By definition,
\begin{equation}
\label{eq: Z_2, part 1}
Z_2(x,t\abs{x}^\beta) = \Psi_2(\tilde x, \Phi_2(x,t\abs{x}^\beta)),
\end{equation}
where \(\tilde x \coloneqq \Phi_1(x,t\abs{x}^\beta)\).
By (\ref{eq: Phi2 asymptotic formula}), we have
\begin{equation}
\label{eq: Phi2 asymptotic formula slightly modified}
\Phi_2(x,t\abs{x}^\beta) = 
          \phi(t)\abs{\lambda x}^\beta + o(\abs{x}^\beta),
\end{equation}
where
\begin{equation}
\label{eq: lambda and phi - second time}
\begin{cases}
\lambda = \lambda_+ \,\text{ and } \,\phi = \phi_+, &\text{if } \,x > 0,\\
\lambda = \lambda_- \,\text{ and } \,\phi = \phi_-, &\text{if } \,x < 0.
\end{cases}
\end{equation}
By (\ref{eq: Phi1 asymptotic formula}), we have
\begin{equation*}
\lim_{x\to 0}\frac{\tilde{x}}{\lambda x} = 1,
\end{equation*}
so that
\begin{equation}
\label{eq: |tilde x|^beta/|lambda x|^beta -> 1}
\lim_{x\to 0}\frac{\abs{\tilde{x}}^\beta}{\abs{\lambda x}^\beta} = 1
\end{equation}
and hence 
\begin{equation}
\label{eq: |lambda x|^beta is comparable to |x|^beta}
\abs{\lambda x}^\beta = \abs{\tilde{x}}^\beta + o(\abs{x}^\beta).
\end{equation}
From (\ref{eq: Phi2 asymptotic formula slightly modified}) and (\ref{eq: |lambda x|^beta is comparable to |x|^beta}), we get
\begin{equation*}
\Phi_2(x,t\abs{x}^\beta) = 
          \phi(t)\abs{\tilde{x}}^\beta + o(\abs{x}^\beta).
\end{equation*} 
Thus,
\begin{equation}
\label{eq: Z_2, part 2}
\Psi_2(\tilde{x},\Phi_2(x,t\abs{x}^\beta)) = 
\Psi_2(\tilde{x}, \phi(t)\abs{\tilde{x}}^\beta) + o(\abs{x}^\beta).
\end{equation} 
By applying (\ref{eq: Phi2 asymptotic formula}) to \(\Psi_2\) and using 
(\ref{eq: |tilde x|^beta/|lambda x|^beta -> 1}), we obtain
\begin{equation*}
\Psi_2(\tilde{x}, \phi(t)\abs{\tilde{x}}^\beta) = 
   \abs{\mu}^\beta\psi(\phi(t))\abs{\tilde{x}}^\beta + o(\abs{x}^\beta),
\end{equation*}
where
\begin{equation}
\label{eq: mu and psi}
\begin{cases}
\mu = \mu_+ \,\text{ and } \,\psi = \psi_+, &\text{if } \,\tilde{x} > 0,\\
\mu = \mu_- \,\text{ and } \,\psi = \psi_-, &\text{if } \,\tilde{x} < 0.
\end{cases}
\end{equation}
By (\ref{eq: |lambda x|^beta is comparable to |x|^beta}), it follows that
\begin{equation}
\label{eq: Z_2, part 3}
\Psi_2(\tilde{x}, \phi(t)\abs{\tilde{x}}^\beta) = 
   \abs{\lambda\mu}^\beta\psi(\phi(t))\abs{x}^\beta + o(\abs{x}^\beta).
\end{equation}
Then, by (\ref{eq: Z_2, part 1}), (\ref{eq: Z_2, part 2}), and (\ref{eq: Z_2, part 3}), we have
\begin{equation*}
Z_2(x,t\abs{x}^\beta) = \abs{\lambda\mu}^\beta\psi(\phi(t))\abs{x}^\beta + o(\abs{x}^\beta).
\end{equation*}

Since \(\tilde{x}\) and \(\lambda x\) have the same sign for small values of \(x\), it follows from (\ref{eq: mu and psi}) that
\begin{equation}
\label{eq: mu and psi - modified}
\begin{cases}
\mu = \mu_+ \,\text{ and } \,\psi = \psi_+, &\text{if } \,\lambda x > 0,\\
\mu = \mu_- \,\text{ and } \,\psi = \psi_-, &\text{if } \,\lambda x < 0.
\end{cases}
\end{equation}
And since 
\begin{equation*}
\begin{cases}
\lambda_+ > 0 \text{ and } \lambda_- > 0, &\text{if \(\Phi\) is direct}, \\
\lambda_+ < 0 \text{ and } \lambda_- < 0, &\text{if \(\Phi\) is reverse}, \\
\end{cases}
\end{equation*}
it follows from (\ref{eq: lambda and phi - second time}) and (\ref{eq: mu and psi - modified}) that
\begin{equation*}
\begin{cases}
\lambda = \lambda_+,\, \mu = \mu_+,\, \phi = \phi_+,\, \psi = \psi_+, 
& \text{if } \,x > 0,\\
\lambda = \lambda_-,\, \mu = \mu_-,\, \phi = \phi_-,\, \psi = \psi_-, 
& \text{if } \,x < 0\\
\end{cases}
\end{equation*}
\quad\text{ or }\quad
\begin{equation*}
\begin{cases}
\lambda = \lambda_+,\, \mu = \mu_-,\, \phi = \phi_+,\, \psi = \psi_-, 
& \text{if } \,x > 0,\\
\lambda = \lambda_-,\, \mu = \mu_+,\, \phi = \phi_-,\, \psi = \psi_+, 
& \text{if } \,x < 0,\\
\end{cases}
\end{equation*}
depending on whether \(\Phi\) is direct or reverse, respectively.
\end{proof}

\begin{corollary}
\label{cor: beta-transform of a composition}
Let \(\Phi = (\Phi_1,\Phi_2)\from (\R^2,0)\to(\R^2,0)\) and 
\(\Psi = (\Psi_1,\Psi_2)\from (\R^2,0)\to(\R^2,0)\) be 
\(\beta\)-regular germs of semialgebraic bi-Lipschitz homeomorphisms.
Then \(Z \coloneqq \Psi\circ\Phi\) is a \(\beta\)-regular germ of semialgebraic bi-Lipschitz homeomorphism. Furthermore, the following holds:
\begin{equation*}
(\nu,\zeta) = 
\begin{cases}
\left(
(\lambda_+\mu_+,\lambda_-\mu_-), (\psi_+\circ\phi_+,\psi_-\circ\phi_-)
\right), 
&\text{if\/ \(\Phi\) is direct,}\\
\left(
(\lambda_+\mu_-,\lambda_-\mu_+), (\psi_-\circ\phi_+,\psi_+\circ\phi_-)
\right), 
&\text{if\/ \(\Phi\) is reverse,}
\end{cases}
\end{equation*}
where \((\lambda,\phi)\), \((\mu,\psi)\), and \((\nu,\zeta)\) are the \(\beta\)-transforms of\/ \(\Phi\), \(\Psi\), and\/ \(Z\), respectively. 
\end{corollary}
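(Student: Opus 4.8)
The plan is to obtain the Corollary directly from the Proposition immediately above it, which already establishes the asymptotic formulas (\ref{eq: asymptotic formula for Z1}) and (\ref{eq: asymptotic formula for Z2}) for $Z \coloneqq \Psi\circ\Phi$, together with the precise correspondence between the signs $\lambda_\pm, \mu_\pm, \phi_\pm, \psi_\pm$ according to whether $\Phi$ is direct or reverse. All the analytic work is already contained in that Proposition, so the remaining task is essentially bookkeeping.

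First I would check that $Z$ is a germ of semialgebraic bi-Lipschitz homeomorphism fixing the origin; this is routine, since the composition of semialgebraic maps is semialgebraic, the composition of bi-Lipschitz maps is bi-Lipschitz, the composition of homeomorphisms is a homeomorphism, and $Z(0) = \Psi(\Phi(0)) = \Psi(0) = 0$. Next I would verify that $Z$ is $\beta$-regular: condition (i) of $\beta$-regularity follows from (\ref{eq: asymptotic formula for Z1}), because there the one-sided limits $\lim_{x\to 0^\pm} Z_1(x,0)/x$ are products of the form $\lambda_\pm\mu_\pm$ or $\lambda_\pm\mu_\mp$, hence nonzero as products of nonzero reals; and condition (ii) is immediate from (\ref{eq: asymptotic formula for Z2}), which displays $Z_2(x,t\abs{x}^\beta)$ as $O(\abs{x}^\beta)$ for each fixed $t$.

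Having established that $Z$ is $\beta$-regular, I would then simply read off its $\beta$-transform $(\nu,\zeta)$ from the definition: $\nu_\pm = \lim_{x\to 0^\pm} Z_1(x,0)/x$ and $\zeta_\pm(t) = \abs{\nu_\pm}^{-\beta}\cdot\lim_{x\to 0^\pm} Z_2(x,t\abs{x}^\beta)/\abs{x}^\beta$. Substituting (\ref{eq: asymptotic formula for Z1}) yields $\nu_+ = \lambda_+\mu_+$, $\nu_- = \lambda_-\mu_-$ when $\Phi$ is direct, and $\nu_+ = \lambda_+\mu_-$, $\nu_- = \lambda_-\mu_+$ when $\Phi$ is reverse. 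Substituting (\ref{eq: asymptotic formula for Z2}), the constant $\abs{\lambda\mu}^\beta$ occurring there is exactly $\abs{\nu_\pm}^\beta$ on the matching side, so it cancels the normalizing factor $\abs{\nu_\pm}^{-\beta}$ and leaves $\zeta_+ = \psi_+\circ\phi_+$, $\zeta_- = \psi_-\circ\phi_-$ when $\Phi$ is direct, and $\zeta_+ = \psi_-\circ\phi_+$, $\zeta_- = \psi_+\circ\phi_-$ when $\Phi$ is reverse --- which is precisely the asserted formula for $(\nu,\zeta)$.

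I do not expect a genuine obstacle here: the only point requiring a little care is to confirm that the sign conventions in (\ref{eq: asymptotic formula for Z1}) and (\ref{eq: asymptotic formula for Z2}) are mutually consistent, so that $\abs{\lambda\mu}^\beta$ really equals $\abs{\nu_\pm}^\beta$ on the correct side; this holds by the way those formulas were derived. I would also note in passing (though it is not needed for the statement) that $Z$ is direct exactly when $\Phi$ and $\Psi$ are both direct or both reverse, which follows immediately from inspecting the signs of $\nu_\pm$.
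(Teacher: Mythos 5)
Your proposal is correct and follows the intended route: the paper states this Corollary without proof immediately after the Proposition giving the asymptotic formulas (\ref{eq: asymptotic formula for Z1}) and (\ref{eq: asymptotic formula for Z2}), treating it as a direct consequence, and your argument simply fills in the bookkeeping that the paper leaves implicit. In particular you correctly observe that $\beta$-regularity of $Z$ must be verified first (condition (i) from (\ref{eq: asymptotic formula for Z1}) since $\lambda_\pm,\mu_\pm$ are nonzero, condition (ii) from (\ref{eq: asymptotic formula for Z2})) before the $\beta$-transform $(\nu,\zeta)$ is even defined, and that the normalizing factor $\abs{\nu_\pm}^{-\beta}$ cancels $\abs{\lambda\mu}^\beta$ on the matching side to leave the pure composition $\psi\circ\phi$.
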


\begin{corollary}
\label{cor: beta-transform of the inverse of a beta-isomorphism}
Let \(\Phi\from(\R^2,0)\to(\R^2,0)\) be a \(\beta\)-regular germ of semialgebraic bi-Lipschitz homeomorphism, 
and let \((\lambda,\phi)\) be the \(\beta\)-transform of \(\Phi\).
If\/ \(\Phi^{-1}\) is \(\beta\)-regular, then \(\lambda_+\), \(\lambda_-\), \(\phi_+\), and \(\phi_-\) are all invertible, and the \(\beta\)-transform of\/ \(\Phi^{-1}\) is given by:
\begin{equation*}
\begin{cases}
\left((\lambda_+^{-1},\lambda_-^{-1}),(\phi_+^{-1},\phi_-^{-1})\right),
&\text{if\/ \(\Phi\) is direct,}\\
\left((\lambda_-^{-1},\lambda_+^{-1}),(\phi_-^{-1},\phi_+^{-1})\right),
&\text{if\/ \(\Phi\) is reverse.}
\end{cases}
\end{equation*} 
\end{corollary}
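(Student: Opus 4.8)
The plan is to deduce everything from Corollary \ref{cor: beta-transform of a composition}, applied to the two trivial compositions \(\Phi^{-1}\circ\Phi=\id\) and \(\Phi\circ\Phi^{-1}=\id\).

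First I would record that the identity germ \((\R^2,0)\to(\R^2,0)\) is \(\beta\)-regular: its first component satisfies \(\Phi_1(x,0)/x\equiv 1\), so \(\lim_{x\to 0^\pm}\Phi_1(x,0)/x=1\neq 0\), and its second component satisfies \(\Phi_2(x,t\abs{x}^\beta)=t\abs{x}^\beta=O(\abs{x}^\beta)\). Hence the identity is a \emph{direct} \(\beta\)-regular germ, and computing straight from the definition of the \(\beta\)-transform gives \(\lambda_\pm=1\) and \(\phi_\pm(t)=\lim_{x\to 0^\pm}t\abs{x}^\beta/\abs{x}^\beta=t\); that is, the \(\beta\)-transform of \(\id\) is \(\bigl((1,1),(\id,\id)\bigr)\). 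Since the \(\beta\)-transform of a \(\beta\)-regular germ is uniquely determined by limits, the \(\beta\)-transform of any \(\beta\)-regular germ equal to \(\id\) as a germ must be this one.

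Write \((\mu,\psi)\) for the \(\beta\)-transform of \(\Phi^{-1}\); this exists because \(\Phi^{-1}\) is assumed \(\beta\)-regular, and \(\lambda_\pm,\mu_\pm\) are automatically nonzero real numbers by condition (i) of \(\beta\)-regularity. Apply Corollary \ref{cor: beta-transform of a composition} with \(\Psi=\Phi^{-1}\), so that \(Z=\Phi^{-1}\circ\Phi=\id\); reading off the \(\beta\)-transform of \(Z\) and comparing with \(\bigl((1,1),(\id,\id)\bigr)\) gives, if \(\Phi\) is direct, \(\lambda_\pm\mu_\pm=1\) and \(\psi_\pm\circ\phi_\pm=\id\), and, if \(\Phi\) is reverse, \(\lambda_+\mu_-=\lambda_-\mu_+=1\) and \(\psi_-\circ\phi_+=\psi_+\circ\phi_-=\id\). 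The scalar relations together with the sign information on \(\lambda_\pm\) (positive in the direct case, negative in the reverse case) show that \(\mu_\pm\) carry the same signs, hence that \(\Phi^{-1}\) is direct precisely when \(\Phi\) is direct. Now that the type of \(\Phi^{-1}\) is known, apply Corollary \ref{cor: beta-transform of a composition} again, this time to \(\Phi\circ\Phi^{-1}=\id\) (so the ``first'' map of the composition is \(\Phi^{-1}\)): comparing \(\beta\)-transforms once more yields the complementary one-sided identities \(\phi_\pm\circ\psi_\pm=\id\) in the direct case, and \(\phi_-\circ\psi_+=\phi_+\circ\psi_-=\id\) in the reverse case.

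Finally I would combine the two sets of one-sided identities. In the direct case \(\psi_\pm\) is a two-sided inverse of \(\phi_\pm\), so each \(\phi_\pm\) is invertible with \(\phi_\pm^{-1}=\psi_\pm\), while \(\lambda_\pm\mu_\pm=1\) gives \(\mu_\pm=\lambda_\pm^{-1}\); substituting into \((\mu,\psi)\) produces \(\bigl((\lambda_+^{-1},\lambda_-^{-1}),(\phi_+^{-1},\phi_-^{-1})\bigr)\). In the reverse case \(\psi_\mp\) is a two-sided inverse of \(\phi_\pm\), so \(\phi_\pm^{-1}=\psi_\mp\), and \(\lambda_+\mu_-=\lambda_-\mu_+=1\) gives \(\mu_+=\lambda_-^{-1}\), \(\mu_-=\lambda_+^{-1}\); substituting gives \(\bigl((\lambda_-^{-1},\lambda_+^{-1}),(\phi_-^{-1},\phi_+^{-1})\bigr)\). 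This is exactly the asserted formula, and along the way we have seen that \(\lambda_\pm\) (being nonzero reals) and \(\phi_\pm\) are invertible. The argument is essentially bookkeeping on top of the composition corollary; the only point needing attention is keeping the direct/reverse labelling of \(\Phi\) and \(\Phi^{-1}\) consistent between the two applications, which is why the step identifying the type of \(\Phi^{-1}\) is carried out before the second application.
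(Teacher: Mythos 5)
Your proof is correct and follows essentially the same route as the paper's: compute the \(\beta\)-transform of the identity germ, apply Corollary \ref{cor: beta-transform of a composition} to \(\Phi^{-1}\circ\Phi=\id\) to extract one set of one-sided identities and the type (direct/reverse) of \(\Phi^{-1}\), then apply it again to \(\Phi\circ\Phi^{-1}=\id\) to get the complementary identities. The only difference is cosmetic: you carry out both the direct and reverse cases explicitly, while the paper writes out only the direct case and declares the reverse case analogous.
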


\begin{proof}
We prove the result in the case where \(\Phi\) is direct; the analogous case where \(\Phi\) is reverse follows similarly. Assume that \(\Phi\) is a direct \(\beta\)-regular germ of semialgebraic bi-Lipschitz homeomorphism and that \(\Phi^{-1}\) is also \(\beta\)-regular. Let \((\mu,\psi) = ((\mu_+,\mu_-),(\psi_+,\psi_-))\) denote the \(\beta\)-transform of \(\Phi^{-1}\).  
We must show that \(\lambda_+\mu_+=1\), \(\lambda_-\mu_- = 1\), 
\(\psi_+\circ\phi_+ = \phi_+\circ\psi_+ = \id_\R\), and 
\(\psi_-\circ\phi_- = \phi_-\circ\psi_- = \id_\R\).

By Corollary \ref{cor: beta-transform of a composition}, the \(\beta\)-transform of \(\Phi^{-1}\circ\Phi\) is given by
\(((\lambda_+\mu_+,\lambda_-\mu_-),(\psi_+\circ\phi_+,\psi_-\circ\phi_-))\).
On the other hand, we have \(\Phi^{-1}\circ\Phi = I\), where \(I\from(\R^2,0)\to(\R^2,0)\) is the germ of the identity map on \(\R^2\). Thus, the \(\beta\)-transform of 
\(\Phi^{-1}\circ\Phi\) is equal to the \(\beta\)-transform of \(I\), which is
\(((1,1),(\id_\R,\id_\R))\).
Hence, \(\lambda_+ \mu_+ = \lambda_- \mu_- = 1\) and 
\(\psi_+\circ\phi_+ = \psi_-\circ\phi_- = \id_\R\).
Since \(\Phi\) is direct, we have \(\lambda_+ > 0\) and \(\lambda_- > 0\), which implies that \(\mu_+ > 0\) and \(\mu_- > 0\). Thus, \(\Phi^{-1}\) is also direct. Then, by Corollary \ref{cor: beta-transform of a composition}, the \(\beta\)-transform of \(\Phi\circ\Phi^{-1}\) is given by
\(((\lambda_+\mu_+,\lambda_-\mu_-),(\phi_+\circ\psi_+,\phi_-\circ\psi_-))\).
Finally, since \(\Phi\circ\Phi^{-1} = I\), it follows that
\(\phi_+\circ\psi_+ = \phi_-\circ\psi_- = \id_\R\).
\end{proof}

\begin{proposition}
\label{prop: about lambda and phi in a beta-transform}
Let \(\Phi\from(\R^2,0)\to(\R^2,0)\) be a \(\beta\)-regular germ of semialgebraic bi-Lipschitz homeomorphism such that \(\Phi^{-1}\) is also \(\beta\)-regular, and let \((\lambda,\phi)\) denote the \(\beta\)-transform of\/ \(\Phi\). Then:
\begin{enumerate}[label=\textnormal{(\alph*)}]
\item \(\lambda_+\) and \/ \(\lambda_-\) are nonzero real numbers and they have the same sign.
\item \(\phi_+\) and \/ \(\phi_-\) are bi-Lipschitz functions.
\end{enumerate}
\end{proposition}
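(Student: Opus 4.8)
The plan is to dispatch the two items separately: (a) follows almost immediately from the structure theory already set up, and (b) combines an elementary Lipschitz estimate with Corollary \ref{cor: beta-transform of the inverse of a beta-isomorphism}.

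For (a), I would argue as follows. By the very definition of a \(\beta\)-regular germ, the limits \(\lambda_+ = \lim_{x\to 0^+}\Phi_1(x,0)/x\) and \(\lambda_- = \lim_{x\to 0^-}\Phi_1(x,0)/x\) are both nonzero, and they are finite because \(\Phi\) is Lipschitz; hence \(\lambda_+,\lambda_-\in\R\setminus\{0\}\). For the assertion that they have the same sign, I would simply invoke Proposition \ref{prop: opposite horizontal initial velocities (beta-isomorphism)} together with the dichotomy \ref{condition: direct beta-isomorphism}/\ref{condition: reverse beta-isomorphism} recorded immediately after it: \(\Phi\) is either direct, in which case \(\lambda_+>0\) and \(\lambda_->0\), or reverse, in which case \(\lambda_+<0\) and \(\lambda_-<0\). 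Note that (a) uses only that \(\Phi\) itself is \(\beta\)-regular.

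For (b), the argument splits into showing that \(\phi_+,\phi_-\) are Lipschitz and that their inverses are Lipschitz. For the first part, fix \(t_1,t_2\in\R\) and let \(L\) be a Lipschitz constant for \(\Phi\). For \(x>0\) the points \((x,t_1x^\beta)\) and \((x,t_2x^\beta)\) lie at distance \(\abs{t_1-t_2}x^\beta\), so \(\abs{\Phi_2(x,t_1x^\beta)-\Phi_2(x,t_2x^\beta)}\leq L\abs{t_1-t_2}x^\beta\); dividing by \(x^\beta\), passing to the limit \(x\to 0^+\) (the relevant one-sided limits exist because \(\Phi\) is semialgebraic and, by \(\beta\)-regularity, \(\Phi_2(x,t\abs{x}^\beta)=O(\abs{x}^\beta)\)), and using the defining formula for \(\phi_+\) gives \(\abs{\phi_+(t_1)-\phi_+(t_2)}\leq\abs{\lambda_+}^{-\beta}L\abs{t_1-t_2}\); the same computation with \(x\to 0^-\) handles \(\phi_-\). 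For the second part, I would apply Corollary \ref{cor: beta-transform of the inverse of a beta-isomorphism}: since both \(\Phi\) and \(\Phi^{-1}\) are \(\beta\)-regular, that corollary guarantees \(\phi_+\) and \(\phi_-\) are invertible as maps \(\R\to\R\) and identifies their inverses with the \(\psi\)-components of the \(\beta\)-transform of \(\Phi^{-1}\) — namely \((\psi_+,\psi_-)=(\phi_+^{-1},\phi_-^{-1})\) if \(\Phi\) is direct and \((\psi_+,\psi_-)=(\phi_-^{-1},\phi_+^{-1})\) if \(\Phi\) is reverse. Applying the Lipschitz estimate just derived to \(\Phi^{-1}\) (again a \(\beta\)-regular bi-Lipschitz germ) shows \(\psi_+\) and \(\psi_-\) are Lipschitz, hence so are \(\phi_+^{-1}\) and \(\phi_-^{-1}\) in either case. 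A bijective Lipschitz self-map of \(\R\) with Lipschitz inverse is a bi-Lipschitz homeomorphism, which finishes (b).

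I do not expect a genuine obstacle here; the only points needing care are (i) justifying the existence of the one-sided limits defining \(\phi_\pm\) before manipulating them (semialgebraicity plus the \(O(\abs{x}^\beta)\) bound), and (ii) the direct/reverse bookkeeping when reading off \(\phi_\pm^{-1}\) from Corollary \ref{cor: beta-transform of the inverse of a beta-isomorphism} — though since we only want the Lipschitz property, this bookkeeping does not affect the conclusion. The hypothesis that \(\Phi^{-1}\) is \(\beta\)-regular is used essentially in the second part of (b): without it we would know neither that \(\phi_\pm\) are invertible nor that their inverses arise as \(\psi\)-components of an honest \(\beta\)-regular bi-Lipschitz germ.
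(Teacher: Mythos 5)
Your proposal is correct and follows the paper's argument essentially step for step: part (a) by invoking Proposition \ref{prop: opposite horizontal initial velocities (beta-isomorphism)} and the direct/reverse dichotomy, and part (b) by bounding \(\abs{\Phi_2(x,t_1\abs{x}^\beta)-\Phi_2(x,t_2\abs{x}^\beta)}\) via the Lipschitz constant of \(\Phi_2\), dividing by \(\abs{x}^\beta\), taking the one-sided limit, and then repeating the estimate for \(\Phi^{-1}\) combined with Corollary \ref{cor: beta-transform of the inverse of a beta-isomorphism}. The only cosmetic difference is that the paper routes the estimate through the asymptotic identity (\ref{eq: Phi2 asymptotic formula}) with an explicit \(o(\abs{x}^\beta)/\abs{x}^\beta\) term, whereas you pass directly to the defining limit for \(\phi_\pm\) — this changes nothing of substance.
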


\begin{proof}
As pointed out just after the proof of Proposition \ref{prop: opposite horizontal initial velocities (beta-isomorphism)}, 
\(\lambda_+ = \lim_{x\to 0^+}\Phi_1(x,0)/x\) and
\(\lambda_- = \lim_{x\to 0^-}\Phi_1(x,0)/x\) are nonzero real numbers which have the same sign. This establishes the first part of the proposition. Now, let us prove the second part.

By (\ref{eq: Phi2 asymptotic formula}), 
for any fixed \(t\) and \(t^\prime\), we have:
\begin{equation*}
\phi_+(t) - \phi_+(t^\prime) =
 \abs{\lambda_+}^{-\beta}\cdot
 \frac{\Phi_2(x,t\abs{x}^\beta) - \Phi_2(x,t^\prime\abs{x}^\beta)}{\abs{x}^\beta}
+ \frac{o(\abs{x}^\beta)}{\abs{x}^\beta}, \quad\text{as } \,x\to 0^+
\end{equation*}
and
\begin{equation*}
\phi_-(t) - \phi_-(t^\prime) =
 \abs{\lambda_-}^{-\beta}\cdot
 \frac{\Phi_2(x,t\abs{x}^\beta) - \Phi_2(x,t^\prime\abs{x}^\beta)}{\abs{x}^\beta}
+ \frac{o(\abs{x}^\beta)}{\abs{x}^\beta}, \quad\text{as } \,x\to 0^-.
\end{equation*}
On the other hand, since \(\Phi_2\) is Lipschitz, there exists \(K > 0\) 
(independent of \(x,t,t^\prime\)) such that
\begin{equation*}
\abs{\Phi_2(x,t\abs{x}^\beta) - \Phi_2(x,t^\prime\abs{x}^\beta)} \leq
     K\abs{t - t^\prime}\abs{x}^\beta.
\end{equation*}
Thus,
\begin{equation}
\label{eq: estimate for abs(phi_+(t) - phi_+(t^prime))}
\abs{\phi_+(t) - \phi_+(t^\prime)} \leq 
\abs{\lambda_+}^{-\beta}K\abs{t - t^\prime} 
     + \frac{o(\abs{x}^\beta)}{\abs{x}^\beta} \quad\text{as } \,x\to 0^+
\end{equation}
and
\begin{equation}
\label{eq: estimate for abs(phi_-(t) - phi_-(t^prime))}
\abs{\phi_-(t) - \phi_-(t^\prime)} \leq 
\abs{\lambda_-}^{-\beta}K\abs{t - t^\prime} 
     + \frac{o(\abs{x}^\beta)}{\abs{x}^\beta} \quad\text{as } \,x\to 0^-.
\end{equation}
By letting \(x\to 0^+\) in (\ref{eq: estimate for abs(phi_+(t) - phi_+(t^prime))}), and \(x\to 0^-\) in (\ref{eq: estimate for abs(phi_-(t) - phi_-(t^prime))}), we obtain:
\begin{equation*}
\abs{\phi_+(t) - \phi_+(t^\prime)} \leq 
     \abs{\lambda_+}^{-\beta}K\abs{t - t^\prime} 
\quad\text{ and }\quad
\abs{\phi_-(t) - \phi_-(t^\prime)} \leq 
     \abs{\lambda_-}^{-\beta}K\abs{t - t^\prime} \,.
\end{equation*}
Therefore, both \(\phi_+\) and \(\phi_-\) are Lipschitz functions.

Up to this point, our argument shows that, if \(\Phi\) is \(\beta\)-regular, then the functions \(\phi_+\) and \(\phi_-\) are both Lipschitz. 
Since \(\Phi^{-1}\) is also \(\beta\)-regular, by hypothesis, this conclusion can also be applied to \(\Phi^{-1}\). Then, by Corollary \ref{cor: beta-transform of the inverse of a beta-isomorphism}, it follows that the functions 
\(\phi_+^{-1}\) and \(\phi_-^{-1}\) are both Lipschitz too.
\end{proof}

Let \(F(X,Y)\) be a \(\beta\)-quasihomogeneous polynomial of degree \(d\), and let \(f_+\) and \(f_-\) be the height functions of \(F\). Then we have:
\begin{equation*}
F^{-1}(0) \cap {\cal H}_+ = \bigcup_{t \in f_+^{-1}(0)} \left\{ (x,tx^\beta) : x > 0\right\}
\;\text{ and }\;
F^{-1}(0) \cap {\cal H}_- = \bigcup_{t \in f_-^{-1}(0)} \left\{ (x,t\abs{x}^\beta) : x < 0\right\},
\end{equation*}
where \({\cal H}_+ = \{(x,y)\in\R^2: x > 0\}\) and\, \({\cal H}_- = \{(x,y)\in\R^2: x < 0\}\). Furthermore, \((0,0)\) is the only point of \(F^{-1}(0)\) on the line \(X = 0\), unless \(X\) is a factor of \(F\). In the latter case, \(F^{-1}(0)\) contains the entire line \(X = 0\).

Thus, if \(X\) is not a factor of \(F(X,Y)\), the connected components of \(F^{-1}(0)\setminus\{(0,0)\}\) are the sets \(R_t = \{(x,tx^\beta): x > 0\}\), where \(t\in f_+^{-1}(0)\), and the sets \(L_t = \{(x,t\abs{x}^\beta): x < 0\}\), where \(t\in f_-^{-1}(0)\). If \(X\) is a factor of \(F(X,Y)\), then the connected components of \(F^{-1}(0)\setminus\{(0,0)\}\) are the sets \(R_t\), \(L_t\), and the sets \(V_+ = \{(0,y): y > 0\}\) and \(V_- = \{(0,y): y < 0\}\). We call the sets \(R_t\) (with \(t\in f_+^{-1}(0)\)) and \(L_t\) (with \(t\in f_-^{-1}(0)\)) the {\it regular components of \(F^{-1}(0)\setminus\{(0,0)\}\)}. If \(X\) is not a factor of \(F(X,Y)\) then all connected components of \(F^{-1}(0)\setminus\{(0,0)\}\) are regular.

\begin{lemma}
\label{lemma: characterization of beta-isomorphism via branch matching}
Let \(F(X,Y)\) and \(G(X,Y)\) be \(\beta\)-quasihomogeneous polynomials of degree \(d\), with \(G\) not of the form \(cX^d\). Suppose that \(F\) and \(G\) are \({\cal R}\)-semialgebraically Lipschitz equivalent, and let  
\(\Phi=(\Phi_1,\Phi_2)\from(\R^2,0)\to(\R^2,0)\) denote a germ of semialgebraic bi-Lipschitz homeomorphism such that \(G\circ\Phi = F\). Then \(\Phi\) establishes a one-to-one correspondence between the connected components of \(F^{-1}(0)\setminus\{(0,0)\}\) and those of \({G^{-1}(0)\setminus\{(0,0)\}}\). If \(F^{-1}(0)\setminus\{(0,0)\}\) has both a regular component \(R_t\) and a regular component \(L_t\) that are matched to regular components of \(G^{-1}(0)\setminus\{(0,0)\}\) under this correspondence, then \(\Phi\) is \(\beta\)-regular. Furthermore, under these conditions, the following holds:
\begin{equation}
\label{eq: gophi = lambda f}
\begin{cases}
g_+\circ \phi_+ = \abs{\lambda_+}^{-d} f_+ 
\quad\text{and\/}\quad
g_-\circ \phi_- = \abs{\lambda_-}^{-d} f_-,
&\text{if\/ \(\Phi\) is direct},\\[2pt]
g_-\circ \phi_+ = \abs{\lambda_+}^{-d} f_+ 
\quad\text{and\/}\quad
g_+\circ \phi_- = \abs{\lambda_-}^{-d} f_-,
&\text{if\/ \(\Phi\) is reverse}.
\end{cases}
\end{equation}
Here, \(f_+,f_-\) are the height functions of\/ \(F\), \(g_+,g_-\) are the height functions of\/ \(G\), and \((\lambda,\phi)\) is the \(\beta\)-transform of\/ \(\Phi\).
\end{lemma}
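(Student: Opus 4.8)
The plan is to prove the three assertions in turn, the middle one being the substantial one.

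\emph{The correspondence.} Since $G\circ\Phi = F$, one has $F^{-1}(0) = \Phi^{-1}(G^{-1}(0))$, so the homeomorphism $\Phi$, which fixes the origin, restricts to a homeomorphism of $F^{-1}(0)\setminus\{(0,0)\}$ onto $G^{-1}(0)\setminus\{(0,0)\}$; as a homeomorphism maps connected components bijectively onto connected components, this is exactly the asserted one-to-one correspondence.

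\emph{$\beta$-regularity.} I would first reduce to checking condition (ii) of the definition of $\beta$-regular. Indeed, if $\Phi_2(x,t\abs{x}^\beta) = O(\abs{x}^\beta)$ for all $t$, then taking $t = 0$ and using $\beta > 1$ gives $\Phi_2(x,0) = O(\abs{x}^\beta) = o(x)$, so the initial velocity of the path $x\mapsto\Phi(\pm x,0)$ (which exists by semialgebraicity, and is finite and nonzero by the bi-Lipschitz property) has vanishing second coordinate, forcing $\lim_{x\to 0^\pm}\Phi_1(x,0)/x\neq 0$, i.e.\ condition (i). To verify (ii), invoke the hypothesis: there is a regular component $R_{t_0}$ of $F^{-1}(0)\setminus\{(0,0)\}$ carried by $\Phi$ onto a regular component of $G^{-1}(0)\setminus\{(0,0)\}$, which is necessarily of the form $R_{s_0} = \{(y,s_0 y^\beta): y>0\}$ or $L_{s_0} = \{(y,s_0\abs{y}^\beta): y<0\}$. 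In either case, for small $x>0$ the point $\Phi(x,t_0 x^\beta)$ lies on that curve, so $\Phi_2(x,t_0 x^\beta) = s_0\abs{\Phi_1(x,t_0 x^\beta)}^\beta$. Since $\Phi$ is Lipschitz and $\beta>1$, $\abs{\Phi_1(x,t_0 x^\beta)}\leq\abs{\Phi(x,t_0 x^\beta)} = O(x)$, hence $\Phi_2(x,t_0 x^\beta) = O(x^\beta)$; the Lipschitz estimate $\abs{\Phi_2(x,t_0 x^\beta) - \Phi_2(x,0)}\leq L\abs{t_0}x^\beta$ then gives $\Phi_2(x,0) = O(x^\beta)$, and the estimate $\abs{\Phi_2(x,t x^\beta) - \Phi_2(x,0)}\leq L\abs{t}x^\beta$ upgrades this to $\Phi_2(x,t x^\beta) = O(x^\beta)$ as $x\to 0^+$, for every $t$. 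The symmetric argument on the side $x<0$, using the matched regular component $L_{t_1}$, yields $\Phi_2(x,t\abs{x}^\beta) = O(\abs{x}^\beta)$ as $x\to 0^-$. This establishes (ii), so $\Phi$ is $\beta$-regular, and by Proposition \ref{prop: opposite horizontal initial velocities (beta-isomorphism)} together with the dichotomy recorded after it, $\Phi$ is either direct or reverse.

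\emph{The identity \textup{(\ref{eq: gophi = lambda f})}.} I would treat the direct case, the reverse one differing only in sign bookkeeping. Fix $t\in\R$. The defining relation of the height functions gives $F(x,tx^\beta) = x^d f_+(t)$ for $x>0$, and likewise $G(u,v) = u^d g_+(v/u^\beta)$ for $u>0$. Since $\lambda_+ > 0$, the asymptotic formulas (\ref{eq: Phi1 asymptotic formula})--(\ref{eq: Phi2 asymptotic formula}) place $\Phi(x,tx^\beta)$ in the right half-plane for small $x>0$, so $G(\Phi(x,tx^\beta)) = \Phi_1(x,tx^\beta)^d\, g_+(\Phi_2(x,tx^\beta)/\Phi_1(x,tx^\beta)^\beta)$. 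From the same formulas, $\Phi_1(x,tx^\beta)^d/x^d\to\lambda_+^d = \abs{\lambda_+}^d$ and $\Phi_2(x,tx^\beta)/\Phi_1(x,tx^\beta)^\beta\to\phi_+(t)$; dividing $G\circ\Phi = F$ evaluated at $(x,tx^\beta)$ by $x^d$, letting $x\to 0^+$, and using continuity of $g_+$, we get $\abs{\lambda_+}^d g_+(\phi_+(t)) = f_+(t)$, i.e.\ $g_+\circ\phi_+ = \abs{\lambda_+}^{-d}f_+$. The identical computation with $x<0$ gives $g_-\circ\phi_- = \abs{\lambda_-}^{-d}f_-$. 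In the reverse case $\lambda_\pm < 0$, so $\Phi(x,tx^\beta)$ lands in the left half-plane for $x>0$ and in the right one for $x<0$, and the same computation then yields $g_-\circ\phi_+ = \abs{\lambda_+}^{-d}f_+$ and $g_+\circ\phi_- = \abs{\lambda_-}^{-d}f_-$.

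The hard part is the $\beta$-regularity step: one must pass from the mere existence of one matched regular component on each side to the uniform bound $\Phi_2(x,t\abs{x}^\beta) = O(\abs{x}^\beta)$ valid for \emph{all} $t$. The key observation is that a regular component of $G^{-1}(0)$ is the graph $z = s_0\abs{y}^\beta$, which — because $\Phi_1 = O(x)$ already follows from the Lipschitz bound — forces $\Phi_2 = O(\abs{x}^\beta)$ along that single curve; Lipschitz continuity of $\Phi_2$ in its second argument then spreads the estimate first to the axis $\{y=0\}$ and then to every curve $\{y = t\abs{x}^\beta\}$. Everything else is routine manipulation of the asymptotic formulas and sign conventions.
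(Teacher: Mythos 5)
Your proof is correct, and while parts 1 and 3 (the connected-component correspondence and the derivation of (\ref{eq: gophi = lambda f}) by dividing $G\circ\Phi=F$ by $|x|^d$ and letting $x\to 0^\pm$) follow the paper's route essentially verbatim, your treatment of the $\beta$-regularity step is genuinely different and, to my eye, simpler. The paper establishes condition (i) of $\beta$-regularity first (from the matched regular components and the nonvanishing initial velocity of $x\mapsto\Phi(\pm x,t_0x^\beta)$), and then proves condition (ii) by a more roundabout route: it observes that $y=\Phi_2(x,t|x|^\beta)/|x|^\beta$ is a root of the single-variable polynomial $H_{t,x}(y)=G(\tilde x,y)-f(t)$ and invokes Cauchy's bound on polynomial roots, which requires $\tilde x$ to be bounded away from $0$ (i.e.\ condition (i)) to control the term $\bigl|(c_0\tilde x^d-f(t))/(c_m\tilde x^{d-rm})\bigr|$. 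You instead prove (ii) directly: the matched curve gives $\Phi_2(x,t_0x^\beta)=s_0|\Phi_1(x,t_0x^\beta)|^\beta=O(x^\beta)$ via the plain Lipschitz bound $|\Phi_1(x,t_0x^\beta)|=O(x)$, and then the Lipschitz estimate $|\Phi_2(x,tx^\beta)-\Phi_2(x,t_0x^\beta)|\leq L|t-t_0|\,|x|^\beta$ spreads this to $\Phi_2(x,0)$ and then to every $t$; condition (i) then follows from (ii) since $\beta>1$ forces $\Phi_2(\pm x,0)=o(x)$, so the nonzero initial velocity of $x\mapsto\Phi(\pm x,0)$ must lie entirely in the first coordinate. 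This order of logic is the reverse of the paper's and avoids Cauchy's bound altogether; it buys you a more elementary and self-contained argument that does not depend on the explicit expansion of $G$ in monomials. Everything else matches the paper's proof, modulo bookkeeping.
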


\begin{proof}
Suppose there exist \(t_0\in f_+^{-1}(0)\) and \(t_1\in f_-^{-1}(0)\) such that \(\Phi\) matches \(R_{t_0}\) and \(L_{t_1}\) to regular components of \(G^{-1}(0)\setminus\{(0,0)\}\).  We begin the proof that \(\Phi\) is \(\beta\)-regular by showing that \(\lim_{x\to 0^+}\Phi_1(x,0)/x\neq 0\) \,and \,\(\lim_{x\to 0^-}\Phi_1(x,0)/x\neq 0\). Assume that \(R_{t_0}\) is matched to a regular component of \(G^{-1}(0)\setminus\{(0,0)\}\) in the right half-plane. Then there exists \(s_0\in g_+^{-1}(0)\) such that 
\(\Phi_2(x,t_0x^\beta) = s_0 [\Phi_1(x,t_0 x^\beta)]^\beta\), for \(x > 0\) sufficiently small. Since the path \(\tilde\gamma_+(x) = \Phi(x,t_0x^\beta)\), \(0\leq x < \epsilon\), has nonzero initial velocity, this implies that \(\lim_{x\to 0^+}\Phi_1(x,t_0x^\beta)/x\neq 0\), and hence \(\lim_{x\to 0^+}\Phi_1(x,0)/x\neq 0\). Similarly, assuming that \(R_{t_0}\) is matched to a regular component of \(G^{-1}(0)\setminus\{(0,0)\}\) in the left half-plane, we can show that there exists \(s_0\in g_-^{-1}(0)\) such that 
\(\Phi_2(x,t_0x^\beta) = s_0 \abs{\Phi_1(x,t_0 x^\beta)}^\beta\), for \(x > 0\) sufficiently small. Then, as in the previous case, we conclude that \(\lim_{x\to 0^+}\Phi_1(x,0)/x\neq 0\). From the assumption that \(L_{t_1}\) is matched to a regular component of \(G^{-1}(0)\setminus\{(0,0)\}\), an analogous argument shows that \(\lim_{x\to 0^-}\Phi_1(x,0)/x\neq 0\).

Next, to complete the proof of the \(\beta\)-regularity of \(\Phi\), we show that for each \(t\in\R\), \(\Phi_2(x,t\abs{x}^\beta) = O(\abs{x}^\beta)\) as \(x\to 0\).
Let \(\beta = r/s\), where \(r > s > 0\) and \(\gcd(r,s) = 1\). 
Then we have \(G(X,Y) = \sum_{k=0}^m c_k X^{d-rk}Y^{sk}\),
where the coefficients \(c_k\) are real numbers, \(c_m\neq 0\), and 
\(1\leq m\leq\lfloor{d/r}\rfloor\). By hypothesis, \(G(\Phi(x,y)) = F(x,y)\). Thus, for any \(t\in\R\) and \(x \neq 0\) sufficiently small, we have
\(G(\Phi(x,t \abs{x}^\beta)) = F(x,t \abs{x}^\beta)\).
Since the polynomials \(F\) and \(G\) are both \(\beta\)-quasihomogeneous of degree \(d\), this implies that
\begin{equation}
\label{eq: gophi = lambda f prior to the limit}
G\left(\frac{\Phi_1(x,t\abs{x}^\beta)}{\abs{x}},
          \frac{\Phi_2(x,t\abs{x}^\beta)}{\abs{x}^\beta}\right) = f(t),
\text{ where }
f = 
\begin{cases}
f_+, & \text{ if } \,x > 0,\\
f_-, & \text{ if } \,x < 0.
\end{cases}
\end{equation}
Hence, for each \(t\in\R\) and \(x\neq 0\) sufficiently small, 
\(y = \Phi_2(x,t \abs{x}^\beta)/\abs{x}^\beta\)
is a zero of the nonconstant polynomial 
\(H_{t,x}(y) \coloneqq G(\tilde x, y) - f(t) \in \R[y]\),
where \(\tilde x \coloneqq \Phi_1(x,t \abs{x}^\beta)/\abs{x}\).
Applying Cauchy's bound on the roots of a polynomial, we obtain
\begin{equation}
\label{eq: Cauchy's bound for Phi_2}
\abs{\frac{\Phi_2(x,t \abs{x}^\beta)}{\abs{x}^\beta}} \leq
1 + \max\left\{ 
     \abs{\frac{c_{m-1}}{c_m}}\abs{\tilde x}^r, \ldots, 
     \abs{\frac{c_{1}}{c_m}}\abs{\tilde x}^{r(m-1)},
     \abs{\frac{c_0{\tilde x}^d - f(t)}{c_m{\tilde x}^{d - rm}}}
     \right\} \,.
\end{equation}
Recall that we already know that \(\lim_{x\to 0^+}\Phi_1(x,0)/x\neq 0\) \,and \,\(\lim_{x\to 0^-}\Phi_1(x,0)/x\neq 0\). Then, by (\ref{eq: equality of first components of initial velocities}), we obtain
\begin{equation}
\label{eq: nonzero x tilde}
\lim_{x\to 0^+}\frac{\Phi_1(x,t\abs{x}^\beta)}{\abs{x}}\neq 0
\quad\text{and}\quad
\lim_{x\to 0^-}\frac{\Phi_1(x,t\abs{x}^\beta)}{\abs{x}}\neq 0,
\quad\text{for all }t\in\R.
\end{equation}
From (\ref{eq: Cauchy's bound for Phi_2}) and (\ref{eq: nonzero x tilde}), it follows that, for each \(t\in\R\), \(\Phi_2(x,t \abs{x}^\beta)/\abs{x}^\beta\) is bounded for \(x\neq 0\) sufficiently small.

Finally, letting \(x\to 0^+\) in (\ref{eq: gophi = lambda f prior to the limit}), we obtain
\begin{equation}
\label{eq: g phi+}
g(\phi_+(t)) = \abs{\lambda_+}^{-d}f_+(t),
\text{ where }
g = 
\begin{cases}
g_+, & \text{ if }\lambda_+ > 0,\\
g_-, & \text{ if }\lambda_+ < 0.
\end{cases}
\end{equation}
Similarly, letting \(x\to 0^-\) in (\ref{eq: gophi = lambda f prior to the limit}), we obtain
\begin{equation}
\label{eq: g phi-}
g(\phi_-(t)) = \abs{\lambda_-}^{-d}f_-(t),
\text{ where }
g = 
\begin{cases}
g_-, & \text{ if }\lambda_- > 0,\\
g_+, & \text{ if }\lambda_- < 0.
\end{cases}
\end{equation}
Clearly, equations (\ref{eq: g phi+}) and (\ref{eq: g phi-}) yield (\ref{eq: gophi = lambda f}).
\end{proof}

\begin{theorem}
\label{thm: equivalent polynomials, Lipschitz equivalent height functions}
Let \(F(X,Y)\) and \(G(X,Y)\) be \(\beta\)-quasihomogeneous polynomials of degree \(d\) that are not of the form \(cX^d\). Suppose that \(F\) and \(G\) are \({\cal R}\)-semialgebraically Lipschitz equivalent, and let  
\(\Phi\from(\R^2,0)\to(\R^2,0)\) be a germ of semialgebraic bi-Lipschitz homeomorphism such that \(G\circ\Phi = F\). If any of the following conditions is satisfied, then \(\Phi\) and \(\Phi^{-1}\) are \(\beta\)-regular. 
\begin{enumerate}[label=\textnormal{(\alph*)}]
\item Each of the height functions of \(F\) has at least one real zero and \(X\) is not a factor of \(G\).
\item Each of the height functions of \(F\) has at least two distinct real zeros.
\end{enumerate}
In the affirmative case, the following holds:
\begin{equation*}
\begin{cases}
f_+\cong g_+ \text{ and } \,f_-\cong g_-, & \text{if\/ \(\Phi\) is direct},\\ 
f_+\cong g_- \text{ and } \,f_-\cong g_+, & \text{if\/ \(\Phi\) is reverse}.
\end{cases}
\end{equation*}
Here, \(f_+,f_-\) are the height functions of\/ \(F\), and \(g_+,g_-\) are the height functions of\/ \(G\).
\end{theorem}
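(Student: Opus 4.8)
The plan is to reduce everything to Lemma~\ref{lemma: characterization of beta-isomorphism via branch matching}, applied first to $\Phi$ and then to $\Phi^{-1}$, and then to combine its conclusion~(\ref{eq: gophi = lambda f}) with Proposition~\ref{prop: about lambda and phi in a beta-transform}. Concretely, what must be checked --- separately under (a) and under (b) --- is the branch-matching hypothesis of that lemma: that $F^{-1}(0)\setminus\{(0,0)\}$ has a regular component $R_{t_0}$ (so $t_0\in f_+^{-1}(0)$) and a regular component $L_{t_1}$ (so $t_1\in f_-^{-1}(0)$), both of which $\Phi$ matches to \emph{regular} components of $G^{-1}(0)\setminus\{(0,0)\}$. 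Once this is done, Lemma~\ref{lemma: characterization of beta-isomorphism via branch matching} gives that $\Phi$ is $\beta$-regular and that~(\ref{eq: gophi = lambda f}) holds, and by the dichotomy recorded just after Proposition~\ref{prop: opposite horizontal initial velocities (beta-isomorphism)}, $\Phi$ is then either direct or reverse.

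Under hypothesis (a) the check is immediate: each of $f_+,f_-$ has a zero, so $F^{-1}(0)\setminus\{(0,0)\}$ has a regular component $R_{t_0}$ and a regular component $L_{t_1}$; and since $X$ is not a factor of $G$, every component of $G^{-1}(0)\setminus\{(0,0)\}$ is regular, so the matching is automatically to regular components. Under hypothesis (b) this is the delicate point. Now $f_+$ has two distinct zeros, giving two distinct regular components $R_{t_0},R_{t_0'}$ of $F^{-1}(0)\setminus\{(0,0)\}$, and likewise $f_-$ gives $L_{t_1},L_{t_1'}$; meanwhile $G^{-1}(0)\setminus\{(0,0)\}$ has at most the two non-regular components $V_+$ and $V_-$. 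I would argue that at most one of $R_{t_0},R_{t_0'}$ is matched by $\Phi$ into $\{V_+,V_-\}$: if both were, then, $\Phi$ being injective on components, one image would be $V_+$ and the other $V_-$, so by Remark~\ref{rk: equality of initial velocities} the paths $x\mapsto\Phi(x,t_0x^\beta)$ and $x\mapsto\Phi(x,t_0'x^\beta)$ --- which share one and the same nonzero initial velocity, equal to that of $x\mapsto\Phi(x,0)$ --- would have initial velocities pointing respectively into $V_+$ (a positive multiple of $(0,1)$) and into $V_-$ (a negative multiple of $(0,1)$), a contradiction. Hence, after relabeling, $\Phi$ matches $R_{t_0}$ to a regular component; the same argument with the left-hand paths $x\mapsto\Phi(-x,t_jx^\beta)$ shows that, after relabeling, $\Phi$ matches $L_{t_1}$ to a regular component. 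This is exactly the branch-matching hypothesis, so the lemma applies. I expect this step --- ruling out the absorption of both $R$-type (resp.\ $L$-type) regular components of $F^{-1}(0)$ by $V_+$ and $V_-$ --- to be the main obstacle; the rest is bookkeeping.

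It remains to obtain $\beta$-regularity of $\Phi^{-1}$ and to conclude. The components $R_{t_0}$ and $L_{t_1}$ produced above are regular components of $F^{-1}(0)\setminus\{(0,0)\}$, and $\Phi(R_{t_0}),\Phi(L_{t_1})$ are regular components of $G^{-1}(0)\setminus\{(0,0)\}$. By~(\ref{eq: equality of first components of initial velocities}) and the definition of direct/reverse, $\Phi(R_{t_0})$ and $\Phi(L_{t_1})$ lie one in each half-plane near the origin, so one is of type $R_s$ and the other of type $L_s$; and $\Phi^{-1}$ matches them back to the regular components $R_{t_0},L_{t_1}$ of $F^{-1}(0)\setminus\{(0,0)\}$. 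Since $F$ is not of the form $cX^d$, Lemma~\ref{lemma: characterization of beta-isomorphism via branch matching} applied to $\Phi^{-1}$ (with $F\circ\Phi^{-1}=G$) then yields that $\Phi^{-1}$ is $\beta$-regular. Finally, Proposition~\ref{prop: about lambda and phi in a beta-transform} gives $\lambda_+,\lambda_-\neq 0$ and that $\phi_+,\phi_-$ are bi-Lipschitz homeomorphisms of $\R$, so~(\ref{eq: gophi = lambda f}) --- $g_+\circ\phi_+=\abs{\lambda_+}^{-d}f_+$ and $g_-\circ\phi_-=\abs{\lambda_-}^{-d}f_-$ if $\Phi$ is direct, and the same with $g_+,g_-$ interchanged if $\Phi$ is reverse --- says precisely that $f_+\cong g_+$ and $f_-\cong g_-$ (resp.\ $f_+\cong g_-$ and $f_-\cong g_+$), the positive constants being $\abs{\lambda_\pm}^{-d}$.
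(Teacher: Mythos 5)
Your proof is correct and follows the same route as the paper: apply Lemma~\ref{lemma: characterization of beta-isomorphism via branch matching} to $\Phi$ and then to $\Phi^{-1}$, and finish with Proposition~\ref{prop: about lambda and phi in a beta-transform}. The only difference is that you spell out the branch-matching verification (in particular, the initial-velocity argument under hypothesis (b) ruling out that both $R$-type, or both $L$-type, regular components of $F^{-1}(0)\setminus\{(0,0)\}$ are absorbed by $V_\pm$) and the corresponding check for $\Phi^{-1}$, whereas the paper merely asserts these facts.
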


\begin{proof}
Suppose that either (a) or (b) holds. Then it follows that there exist a regular component \(R_t\) and a regular component \(L_t\) of \(F^{-1}(0)\setminus\{(0,0)\}\) that are matched by \(\Phi\) to regular components of \({G^{-1}(0)\setminus\{(0,0)\}}\). Thus, by Lemma \ref{lemma: characterization of beta-isomorphism via branch matching}, \(\Phi\) is \(\beta\)-regular, and we have:
\begin{equation*}
\begin{cases}
g_+\circ \phi_+ = \abs{\lambda_+}^{-d} f_+ 
\quad\text{and\/}\quad
g_-\circ \phi_- = \abs{\lambda_-}^{-d} f_-,
&\text{if\/ \(\Phi\) is direct},\\[2pt]
g_-\circ \phi_+ = \abs{\lambda_+}^{-d} f_+ 
\quad\text{and\/}\quad
g_+\circ \phi_- = \abs{\lambda_-}^{-d} f_-,
&\text{if\/ \(\Phi\) is reverse},
\end{cases}
\end{equation*}
where, \(f_+,f_-\) are the height functions of\/ \(F\), \(g_+,g_-\) are the height functions of\/ \(G\), and \((\lambda,\phi)\) is the \(\beta\)-transform of \(\Phi\).
Furthermore, since \(F\) is not of the form \(cX^d\), \(F\circ\Phi^{-1} = G\), and \(\Phi^{-1}\) matches regular components of \({G^{-1}(0)\setminus\{(0,0)\}}\), both in the right and left half-planes, to regular components of \(F^{-1}(0)\setminus\{(0,0)\}\), we can apply Lemma \ref{lemma: characterization of beta-isomorphism via branch matching} to conclude that \(\Phi^{-1}\) is also \(\beta\)-regular. Then, by Proposition \ref{prop: about lambda and phi in a beta-transform}, \(\lambda_+\) and \(\lambda_-\) are nonzero real numbers, and \(\phi_+\) and \(\phi_-\) are bi-Lipschitz. This completes the proof.
\end{proof}

\subsection{The zygothetic group}
\label{subsection: proto-transitions}

In this subsection, we express the condition that the height functions of two \(\beta\)-quasihomogeneous polynomials \(F,G\in\R[X,Y]\) of degree \(d\) can be arranged into pairs of Lipschitz equivalent functions in terms of a group action on \(\R^\R\times\R^\R\) (see Corollary \ref{cor: condition on height functions in terms of group action}).\\ 

Let \(\R^*\) be the multiplicative group of all nonzero real numbers, and let \({\cal L}\) be the group of all bi-Lipschitz homeomorphisms from \(\R\) to \(\R\). Let \(H\coloneqq \{(\lambda_1,\lambda_2) \in \R^*\times\R^*:
\lambda_1\lambda_2 > 0\)\} (considered as a subgroup of the direct product 
\(\R^*\times\R^*\)), and let \(K\coloneqq {\cal L} \times {\cal L}\) (direct product). 

\begin{definition}
The elements of \(H\times K\) are called zygotheties.\footnote{The term {\it zygothety} is derived from the Greek words \textgreek{ζυγός} (zygos), meaning {\it yoke}—a device that joins two things together, especially oxen in plowing—and \textgreek{θέσις} (thesis), meaning {\it placement} or {\it arrangement}. This name reflects the essence of the group operation, which involves a coupling along with a specific transformation.} Define a binary operation on \(H\times K\) by setting:
\begin{equation*}
(\mu,\psi)\circ(\lambda,\phi)\coloneqq
\begin{cases}
\left( (\lambda_1\mu_1, \lambda_2\mu_2), 
    (\psi_1\circ \phi_1,\psi_2\circ\phi_2) \right),
    & \text{if \(\lambda > 0\)}\\
\left( (\lambda_1\mu_2, \lambda_2\mu_1), 
    (\psi_2\circ \phi_1,\psi_1\circ\phi_2) \right),
    & \text{if \(\lambda < 0\)}
\end{cases}
\end{equation*}
for all \((\lambda,\phi) = ((\lambda_1,\lambda_2),(\phi_1,\phi_2))\) and 
\((\mu,\psi) = ((\mu_1,\mu_2),(\psi_1,\psi_2))\),
where \(\lambda > 0\) means that \(\lambda_1 > 0\) and \(\lambda_2 > 0\),
and \(\lambda < 0\) means that \(\lambda_1 < 0\) and \(\lambda_2 < 0\).
\end{definition}

\begin{proposition}
\label{prop: zygothetic group}
\((H\times K, \circ)\) is a group, called the zygothetic group. 
\end{proposition}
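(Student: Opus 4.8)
The plan is to put the twisted operation $\circ$ into a recognizable shape and then to realize $(H\times K,\circ)$ concretely as a subgroup of an ordinary semidirect product. First I would fix notation. For $\lambda=(\lambda_1,\lambda_2)\in H$ the condition $\lambda_1\lambda_2>0$ forces $\lambda_1,\lambda_2$ to have the same sign, so $\operatorname{sgn}\colon H\to\{\pm1\}$, $\operatorname{sgn}(\lambda):=\operatorname{sgn}(\lambda_1)=\operatorname{sgn}(\lambda_2)$, is well defined; and since signs are multiplicative, $\operatorname{sgn}$ is a group homomorphism. In particular the two cases $\lambda>0$ and $\lambda<0$ appearing in the definition of $\circ$ are exhaustive and mutually exclusive, so $\circ$ is well defined. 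Next let $N:=H\times K$ with its direct-product group structure, let $Q:=\{\pm1\}$, and let $Q$ act on $N$ by $\alpha_1:=\operatorname{id}_N$ and $\alpha_{-1}(\lambda,\phi):=(\overline{\lambda},\overline{\phi})$, the simultaneous reversal of the two coordinates in both slots; reversal is an automorphism of $H$ and of $K$, so $\alpha_{-1}\in\mathrm{Aut}(N)$ and $\alpha$ is a genuine $Q$-action. Put $\mathbf{s}(\lambda,\phi):=\operatorname{sgn}(\lambda)$; then $\mathbf{s}\colon N\to Q$ is a homomorphism and $\mathbf{s}\circ\alpha_q=\mathbf{s}$ for $q\in Q$ (reversal does not change the sign type). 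The key bookkeeping observation is that the operation in the statement can be rewritten uniformly as
\[
(\mu,\psi)\circ(\lambda,\phi)\;=\;\alpha_{\mathbf{s}(\lambda,\phi)}(\mu,\psi)\cdot(\lambda,\phi),
\]
where $\cdot$ is multiplication in $N$: for $\lambda>0$ this is just $(\mu,\psi)\cdot(\lambda,\phi)$, and for $\lambda<0$ the left factor has its two coordinates swapped first, matching the displayed two-case formula line by line.

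Then I would form the ordinary semidirect product $G:=N\rtimes_\alpha Q$ (a group) and define $\Omega\colon H\times K\to G$ by $\Omega(g):=\big(\alpha_{\mathbf{s}(g)}(g),\,\mathbf{s}(g)\big)$. Three things need checking. (1) $\Omega$ is injective: the second coordinate of $\Omega(g)$ is $\mathbf{s}(g)$, and since $\alpha_{\mathbf{s}(g)}$ is an involution, $g=\alpha_{\mathbf{s}(g)}\big(\alpha_{\mathbf{s}(g)}(g)\big)$ is recovered from the first coordinate. (2) $\Omega$ carries $\circ$ to the multiplication of $G$: using $\mathbf{s}(m\circ n)=\mathbf{s}\big(\alpha_{\mathbf{s}(n)}(m)\cdot n\big)=\mathbf{s}(m)\mathbf{s}(n)$ and expanding both $\Omega(m\circ n)$ and $\Omega(m)\,\Omega(n)$, the required equality reduces to $\alpha_{\mathbf{s}(m)\mathbf{s}(n)}\circ\alpha_{\mathbf{s}(n)}=\alpha_{\mathbf{s}(m)}$, which holds because $\alpha$ is a $Q$-action and $\mathbf{s}(n)^2=1$. (3) The image of $\Omega$ is exactly the graph $\{(n,q)\in G:\mathbf{s}(n)=q\}$, and this graph is a subgroup of $G$: closure under the semidirect-product multiplication is precisely $\mathbf{s}\big(n_1\,\alpha_{\mathbf{s}(n_1)}(n_2)\big)=\mathbf{s}(n_1)\mathbf{s}(n_2)$, which follows from $\mathbf{s}\circ\alpha_q=\mathbf{s}$ and multiplicativity of $\mathbf{s}$, and closure under inversion (together with containing the unit) is the same computation. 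Since $\Omega$ is a bijection onto a subgroup of a group that carries $\circ$ to the group multiplication, $(H\times K,\circ)$ is a group and $\Omega$ is an isomorphism onto that subgroup. As a byproduct this pins down the unit $\big((1,1),(\operatorname{id}_\R,\operatorname{id}_\R)\big)$ and the inverse of $(\lambda,\phi)$, namely $\alpha_{\operatorname{sgn}(\lambda)}(\lambda^{-1},\phi^{-1})$ with componentwise inverses in $H$ and $K$, consistently with Corollary~\ref{cor: beta-transform of the inverse of a beta-isomorphism}.

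I expect the only genuinely delicate point to be the structural one: in $(\mu,\psi)\circ(\lambda,\phi)$ the twisting is dictated by the \emph{right}-hand operand rather than the left, so $(H\times K,\circ)$ is not literally a semidirect product in the obvious way and one cannot simply quote ``a semidirect product is a group.'' The device above — absorbing the sign of $\lambda$ into an auxiliary $Q$-coordinate via $\Omega$ — is precisely what converts this right-twist into the standard left-twist of $G$. A fully self-contained alternative is to skip $G$ and verify the three group axioms directly from the reformulation $(\mu,\psi)\circ(\lambda,\phi)=\alpha_{\mathbf{s}(\lambda,\phi)}(\mu,\psi)\cdot(\lambda,\phi)$; associativity then reduces to exactly the same identities about $\alpha$ and $\mathbf{s}$ (a short computation), with the unit and inverse as above. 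The non-commutativity of $\mathcal L$ causes no trouble: it only means the composition factors in the $K$-slot must be kept in the order in which they appear, which the reformulation handles automatically.
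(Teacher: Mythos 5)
Your proof is correct, and the opening reformulation is exactly the one the paper uses: rewriting $(\mu,\psi)\circ(\lambda,\phi)$ as $\alpha_{\mathbf{s}(\lambda,\phi)}(\mu,\psi)\cdot(\lambda,\phi)$ where $\cdot$ is the direct-product multiplication on $N=H\times K$. (The paper packages the twist as a homomorphism $\alpha\from N\to\mathrm{Aut}(N)$ satisfying $\alpha\circ\varphi=\alpha$ for $\varphi\in\mathrm{Im}\,\alpha$ and $\mathrm{Im}\,\alpha$ abelian, which is your $\mathbf{s}\circ\alpha_q=\mathbf{s}$ and the commutativity of $Q=\{\pm1\}$ in different clothing.) Where you genuinely diverge is in what you do with this reformulation: the paper verifies the group axioms directly from those two properties of $\alpha$ (with the actual verification left to the reader), whereas you construct the auxiliary semidirect product $G=N\rtimes_\alpha Q$, show that $\Omega(g)=\bigl(\alpha_{\mathbf{s}(g)}(g),\mathbf{s}(g)\bigr)$ is a bijection onto the graph subgroup $\{(n,q):\mathbf{s}(n)=q\}$ of $G$ carrying $\circ$ to the multiplication in $G$, and pull the group structure back. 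Your route trades the direct verification of associativity (which is where the paper's sketch would do the real work) for the construction of an ambient group in which associativity is automatic; the paper's route stays inside $H\times K$ and yields the unit and inverse formulas immediately. You correctly flag the paper's approach yourself as the ``fully self-contained alternative,'' and your byproduct formulas — unit $\bigl((1,1),(\id_\R,\id_\R)\bigr)$, inverse $\alpha_{\operatorname{sgn}(\lambda)}(\lambda^{-1},\phi^{-1})$ — agree with the paper's $\alpha(g^{-1})(g^{-1})$ and with Corollary~\ref{cor: beta-transform of the inverse of a beta-isomorphism}.
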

\begin{proof}
Let \((H\times K, \cdot)\) be the direct product of \(H\) and \(K\), so that
\begin{equation*}
(\mu,\psi)\cdot(\lambda,\phi) = 
\left( (\lambda_1\mu_1,\lambda_2\mu_2), (\psi_1\circ\phi_1,\psi_2\circ\phi_2) \right).
\end{equation*}

We express the operation \(\circ\) in terms of the operation \(\cdot\)\,, and then we use this expression to show that \((H\times K,\circ)\) is a group. 
Let \(\iota\from H\times K\to H\times K\) be the identity map and 
\(\tau\from H\times K\to H\times K\) be given by 
\(\tau((\lambda_1,\lambda_2),(\phi_1,\phi_2)) = 
((\lambda_2,\lambda_1),(\phi_2,\phi_1))\).

Define \(\theta\from H\to \text{Aut}(H\times K)\) by
\begin{equation*}
\theta(\lambda) \coloneqq
\begin{cases}
\iota, &\text{if }\lambda > 0,\\
\tau, &\text{if }\lambda < 0. 
\end{cases}
\end{equation*}
Clearly, \(\theta\) is a group homomorphism.

Let \(\pi\from H\times K\to H\) be the natural projection onto \(H\). Then
\(\alpha \coloneqq \theta\circ\pi\from H\times K\to \text{Aut}(H\times K)\)
is a group homomorphism such that:
\begin{enumerate}[label=\roman*.]
\item \(\alpha\circ\varphi = \alpha\) for all \(\varphi\in \text{Im}\,\alpha\);
\item \(\text{Im}\,\alpha\) is an abelian subgroup of \(\text{Aut}(H\times K)\).
\end{enumerate}
Also, we have:
\begin{equation*}
(\mu,\psi)\circ(\lambda,\phi) = 
   \left(\alpha(\lambda,\phi)(\mu,\psi)\right)\cdot(\lambda,\phi).
\end{equation*}
Building on these observations, the verification of the group axioms for \((H\times K, \circ)\) follows naturally. Note that the identity element of \((H\times K,\circ)\) coincides with the identity element of \((H\times K,\cdot)\), and that the inverse of an element \(g\) in \((H\times K,\circ)\) is given by \(\alpha(g^{-1})(g^{-1})\).
\end{proof}

Now, we define a family of actions of the zygothetic group on the set \(\R^\R\times\R^\R\).

\begin{proposition}
For each integer \(d\geq 1\), the map \(\circ\from (\R^\R\times\R^\R)\times (H\times K) \to \R^\R\times\R^\R\) defined by
\begin{equation*}
(g_1,g_2)\circ (\lambda,\phi) \coloneqq
\begin{cases}
\left(\abs{\lambda_1}^d g_1\circ\phi_1, \abs{\lambda_2}^d g_2\circ\phi_2\right),
&\text{if}\;\;\lambda > 0\\[5pt]
\left(\abs{\lambda_1}^d g_2\circ\phi_1, \abs{\lambda_2}^d g_1\circ\phi_2\right),
&\text{if}\;\;\lambda < 0
\end{cases}
\end{equation*}
is an action of the zygothetic group on \(\R^\R\times\R^\R\).
\end{proposition}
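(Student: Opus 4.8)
The plan is to verify the two axioms of a right group action. With $e\coloneqq((1,1),(\id_\R,\id_\R))$ denoting the identity of the zygothetic group, the identity axiom $(g_1,g_2)\circ e=(g_1,g_2)$ is immediate: since $1>0$ the first branch of the defining formula applies, so $(g_1,g_2)\circ e=(|1|^d g_1\circ\id_\R,\,|1|^d g_2\circ\id_\R)=(g_1,g_2)$. The content lies in the compatibility axiom
\begin{equation*}
\bigl((g_1,g_2)\circ(\mu,\psi)\bigr)\circ(\lambda,\phi)=(g_1,g_2)\circ\bigl((\mu,\psi)\circ(\lambda,\phi)\bigr),
\end{equation*}
for all $(g_1,g_2)\in\R^\R\times\R^\R$ and all $(\mu,\psi),(\lambda,\phi)\in H\times K$, where the outer and inner $\circ$ on the right-hand side denotes the operation of the zygothetic group from Proposition~\ref{prop: zygothetic group}.

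To organize this I would reuse the device from the proof of Proposition~\ref{prop: zygothetic group}. Let $\cdot$ denote the direct-product operation on $H\times K$, and define the ``untwisted'' map $\bullet\from(\R^\R\times\R^\R)\times(H\times K)\to\R^\R\times\R^\R$ by $(g_1,g_2)\bullet(\lambda,\phi)\coloneqq(|\lambda_1|^d g_1\circ\phi_1,\,|\lambda_2|^d g_2\circ\phi_2)$. Using only associativity of composition and $|ab|^d=|a|^d|b|^d$, one checks directly that $\bullet$ is a right action of $(H\times K,\cdot)$. Let $\sigma\from\R^\R\times\R^\R\to\R^\R\times\R^\R$ be the coordinate swap $\sigma(g_1,g_2)=(g_2,g_1)$, and recall the automorphism $\tau$ of $H\times K$ from that proof. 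A short computation gives the intertwining identity $\sigma(x)\bullet a=\sigma\bigl(x\bullet\tau(a)\bigr)$ for all $x\in\R^\R\times\R^\R$ and $a\in H\times K$. Finally, the defining formula of $\circ$ can be rewritten uniformly as $(g_1,g_2)\circ(\lambda,\phi)=\bigl(s(\lambda)(g_1,g_2)\bigr)\bullet(\lambda,\phi)$, where $s(\lambda)=\id$ if $\lambda>0$ and $s(\lambda)=\sigma$ if $\lambda<0$; thus $s(\lambda)$ is the ``avatar on $\R^\R\times\R^\R$'' of $\theta(\lambda)\in\{\iota,\tau\}$, compatible with $\bullet$ precisely through the intertwining identity.

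With these pieces the compatibility axiom follows by exactly the formal manipulation that establishes the group axioms in Proposition~\ref{prop: zygothetic group}: expand the two occurrences of $\circ$ on the left-hand side via the uniform formula, push the swap avatar past $\bullet(\mu,\psi)$ using the intertwining identity, collapse the result by the right-action property of $\bullet$, and recognize $\tau(\mu,\psi)\cdot(\lambda,\phi)$ (when $\lambda<0$) as $(\mu,\psi)\circ(\lambda,\phi)$. The only delicate point is the sign bookkeeping: $\tau$ preserves the sign of the scalar part, so the scalar part of $(\mu,\psi)\circ(\lambda,\phi)$ has sign equal to the product of the signs of those of $(\mu,\psi)$ and $(\lambda,\phi)$, which is exactly what forces the composite $s(\lambda)\circ s(\mu)$ to agree with the avatar attached to $(\mu,\psi)\circ(\lambda,\phi)$. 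I expect this sign/ordering bookkeeping to be the only real obstacle; there is no conceptual difficulty, and a reader who prefers may instead verify the axiom by brute force, splitting into the four cases determined by the signs of the scalar parts $\mu$ and $\lambda$ and unwinding both sides --- again using only associativity of $\circ$ and multiplicativity of $|\cdot|^d$ --- to the same pair of functions.
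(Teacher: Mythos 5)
Your proposal is correct and follows essentially the same route as the paper: define the untwisted action \(\cbullet\) of the direct product, introduce the coordinate swap (your \(\sigma\), the paper's \(T\)) and the sign-determined ``avatar'' map, rewrite \(\circ\) via the twist formula \((g_1,g_2)\circ(\lambda,\phi)=(s(\lambda)(g_1,g_2))\cbullet(\lambda,\phi)\), and push the swap through \(\cbullet\) using the intertwining identity (which is the paper's observation~I). The only difference is presentational: the paper records the two bookkeeping facts (intertwining and commutativity of the swap avatars) and leaves the final unwinding to the reader, whereas you carry the manipulation through and explicitly note the sign-matching \(s(\lambda)\circ s(\mu)=s\bigl(\text{sign of the scalar part of }(\mu,\psi)\circ(\lambda,\phi)\bigr)\), which is a welcome bit of extra detail.
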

\begin{proof}
First, we note that the map
\(\cbullet\from (\R^\R\times\R^\R)\times (H\times K) \to \R^\R\times\R^\R\) given by
\begin{equation*}
(g_1,g_2)\cbullet (\lambda,\phi)\coloneqq
(\abs{\lambda_1}^d g_1\circ\phi_1, \abs{\lambda_2}^d g_2\circ\phi_2)
\end{equation*}
is an action of the direct product \((H\times K,\cdot)\) on 
\(\R^\R\times\R^\R\). 

Next, we express the map \(\circ\) in terms of the action \(\cbullet\), and then we use this expression to show that the map \(\circ\) is an action of the zygothetic group on \(\R^\R\times\R^\R\). Denote by \(\text{Bij}(\R^\R\times\R^\R)\) the group of all bijections on \(\R^\R\times\R^\R\). Let \(I\from \R^\R\times\R^\R\to \R^\R\times\R^\R\) be the identity map and \(T\from \R^\R\times\R^\R\to \R^\R\times\R^\R\) be given by 
\(T(g_1,g_2) = (g_2,g_1)\). Define 
\(\Theta\from H\to\text{Bij}(\R^\R\times\R^\R)\) by
\begin{equation*}
\Theta(\lambda) \coloneqq
\begin{cases}
I, &\text{if}\quad\lambda > 0,\\
T, &\text{if}\quad\lambda < 0.\\
\end{cases}
\end{equation*}
Clearly, \(\Theta\) is a group homomorphism. Let \(\pi\from H\times K\to H\) be the natural projection. Then 
\(A\coloneqq \Theta\circ\pi
     \from (H\times K,\circ) \to \text{Bij}(\R^\R\times\R^\R)\)
is a group homomorphism such that:
\vspace{0.2cm}
\begin{enumerate}[label=\Roman*.,itemsep=0.2cm]
\item \(A(\mu,\psi)((g_1,g_2)\cbullet(\lambda,\phi)) = 
(A(\mu,\psi)(g_1,g_2))\cbullet\alpha(\mu,\psi)(\lambda,\phi)\),
where \(\alpha = \theta\circ\pi\from H\times K\to \text{Aut}(H\times K,\cdot)\) is the homomorphism defined in the proof of Proposition \ref{prop: zygothetic group};
\item \(\text{Im}\,A\) is an abelian subgroup of 
\(\text{Bij}(\R^\R\times\R^\R)\).
\end{enumerate}
\vspace{0.2cm}
Also, we have:
\begin{equation*}
(g_1,g_2)\circ(\lambda,\phi) = 
     (A(\lambda,\phi)(g_1,g_2))\cbullet(\lambda,\phi).
\end{equation*}
Building on these observations, the verification of the group action axioms for the map \(\circ\) follows naturally.
\end{proof}

\begin{proposition}
Let \(f_1,f_2,g_1,g_2\from\R\to\R\) be arbitrary functions, and let 
\(d\geq 1\) be a fixed integer. The following conditions are equivalent:
\begin{enumerate}[label=\roman*.]
\item There exist constants \(c_1,c_2 > 0\) and bi-Lipschitz homeomorphisms 
\(\phi_1,\phi_2\from\R\to\R\) such that
\begin{equation*}
(g_1\circ \phi_1 = c_1 f_1 \;\text{ and }\; g_2\circ \phi_2 = c_2 f_2)
 \;\text{ or }\;
(g_2\circ \phi_1 = c_1 f_1 \;\text{ and }\; g_1\circ \phi_2 = c_2 f_2).
\end{equation*}
\item There exists a zygothety \((\lambda,\phi)\) such that
\((g_1,g_2)\circ (\lambda,\phi) = (f_1,f_2)\).
\end{enumerate}
\end{proposition}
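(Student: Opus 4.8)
The plan is to prove the equivalence by unwinding the definition of the zygothetic action $\circ$ and matching the two disjuncts appearing in (i) with the two branches ($\lambda>0$ and $\lambda<0$) in that definition. The only input beyond bookkeeping is the elementary observation that, since $d\ge 1$, the map $t\mapsto t^d$ is a bijection of $(0,+\infty)$ onto itself; consequently every positive real $c$ can be written as $c=\abs{\lambda}^{-d}$ both for a unique $\lambda>0$ and for a unique $\lambda<0$, which is exactly the freedom we will need to accommodate the sign constraint defining $H$.

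First I would prove (ii)$\implies$(i). Let $(\lambda,\phi)=((\lambda_1,\lambda_2),(\phi_1,\phi_2))\in H\times K$ with $(g_1,g_2)\circ(\lambda,\phi)=(f_1,f_2)$. By definition of $K={\cal L}\times{\cal L}$, the maps $\phi_1,\phi_2$ are bi-Lipschitz homeomorphisms of $\R$, and since $(\lambda_1,\lambda_2)\in H$ we have $\lambda_1\lambda_2>0$, so either $\lambda>0$ or $\lambda<0$. Put $c_1\coloneqq\abs{\lambda_1}^{-d}>0$ and $c_2\coloneqq\abs{\lambda_2}^{-d}>0$. If $\lambda>0$, the first branch of the definition of $\circ$ gives $\abs{\lambda_1}^d\,g_1\circ\phi_1=f_1$ and $\abs{\lambda_2}^d\,g_2\circ\phi_2=f_2$, i.e.\ $g_1\circ\phi_1=c_1 f_1$ and $g_2\circ\phi_2=c_2 f_2$: this is the first disjunct of (i). If $\lambda<0$, the second branch gives $\abs{\lambda_1}^d\,g_2\circ\phi_1=f_1$ and $\abs{\lambda_2}^d\,g_1\circ\phi_2=f_2$, i.e.\ $g_2\circ\phi_1=c_1 f_1$ and $g_1\circ\phi_2=c_2 f_2$: this is the second disjunct of (i).

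For (i)$\implies$(ii), assume first that the first disjunct holds, so $g_1\circ\phi_1=c_1 f_1$ and $g_2\circ\phi_2=c_2 f_2$ with $c_1,c_2>0$ and $\phi_1,\phi_2$ bi-Lipschitz. Choose $\lambda_1\coloneqq c_1^{-1/d}>0$ and $\lambda_2\coloneqq c_2^{-1/d}>0$; then $(\lambda_1,\lambda_2)\in H$, $\lambda>0$, and $\abs{\lambda_i}^d=c_i^{-1}$, so the first branch of $\circ$ yields $(g_1,g_2)\circ(\lambda,\phi)=(c_1^{-1}g_1\circ\phi_1,\;c_2^{-1}g_2\circ\phi_2)=(f_1,f_2)$. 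If instead the second disjunct holds, so $g_2\circ\phi_1=c_1 f_1$ and $g_1\circ\phi_2=c_2 f_2$, I would instead choose $\lambda_1\coloneqq -c_1^{-1/d}<0$ and $\lambda_2\coloneqq -c_2^{-1/d}<0$; then $(\lambda_1,\lambda_2)\in H$, $\lambda<0$, and the second branch of $\circ$ yields $(g_1,g_2)\circ(\lambda,\phi)=(\abs{\lambda_1}^d g_2\circ\phi_1,\;\abs{\lambda_2}^d g_1\circ\phi_2)=(c_1^{-1}g_2\circ\phi_1,\;c_2^{-1}g_1\circ\phi_2)=(f_1,f_2)$. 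In either case $(\lambda,\phi)$ is a zygothety with the required property.

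There is no real obstacle here: the statement is a translation between two notations, and the only point deserving a word of care is why the condition $\lambda_1\lambda_2>0$ in the definition of $H$ is harmless. The reason is that the value of $(g_1,g_2)\circ(\lambda,\phi)$ depends on $\lambda_1,\lambda_2$ only through the absolute values $\abs{\lambda_1},\abs{\lambda_2}$ and through which branch applies ($\lambda>0$ versus $\lambda<0$); hence, when realizing a configuration from (i), one is always free to take both $\lambda_i$ positive in the first case and both negative in the second, which keeps the pair inside $H$ without altering the resulting functions.
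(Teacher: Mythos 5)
Your proof is correct and follows essentially the same route as the paper: for (i)\(\implies\)(ii) you choose \(\lambda=(c_1^{-1/d},c_2^{-1/d})\) with positive entries for the first disjunct and \(\lambda=(-c_1^{-1/d},-c_2^{-1/d})\) with negative entries for the second, exactly as the paper does. The only difference is that you also spell out the (ii)\(\implies\)(i) direction, which the paper dismisses as immediate.
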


\begin{proof}
We prove only that (i) implies (ii), the other implication being immediate. Suppose that there exist constants \(c_1,c_2 > 0\) and bi-Lipschitz homeomorphisms
\(\phi_1,\phi_2\from\R\to\R\) such that 
\begin{equation*}
(g_1\circ \phi_1 = c_1 f_1 \;\text{ and }\; g_2\circ \phi_2 = c_2 f_2)
 \;\text{ or }\;
(g_2\circ \phi_1 = c_1 f_1 \;\text{ and }\; g_1\circ \phi_2 = c_2 f_2).
\end{equation*}
If \(g_1\circ\phi_1 = c_1 f_1\) and \(g_2\circ\phi_2 = c_2 f_2\), then
\((g_1,g_2)\circ(\lambda,\phi) = (f_1,f_2)\),
where \(\lambda = (c_1^{-1/d},c_2^{-1/d})\) and \(\phi = (\phi_1,\phi_2)\).
Otherwise, if \(g_2\circ\phi_1 = c_1 f_1\) and \(g_1\circ\phi_2 = c_2 f_2\), 
then \((g_1,g_2)\circ(\lambda,\phi) = (f_1,f_2)\),
where \(\lambda = (-c_1^{-1/d},-c_2^{-1/d})\) and \(\phi = (\phi_1,\phi_2)\).
\end{proof}

\begin{corollary}
\label{cor: condition on height functions in terms of group action}
Let \(F(X,Y)\) and \(G(X,Y)\) be \(\beta\)-quasihomogeneous polynomials of degree \(d\). Let \(f_+,f_-\) be the height functions of \(F\), and 
\(g_+,g_-\) be the height functions of \(G\). The following conditions are equivalent:
\begin{enumerate}[label=\roman*.]
\item \((f_+\cong g_+ \,\text{ and }\, f_-\cong g_-)\) 
\,or\, 
\((f_+\cong g_- \,\text{ and }\, f_-\cong g_+)\).

\item \((g_+,g_-)\circ (\lambda,\phi) = (f_+,f_-)\), for some zygothety 
\((\lambda,\phi)\).
\end{enumerate}
\end{corollary}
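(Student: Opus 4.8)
The plan is to derive this statement as an immediate specialization of the preceding Proposition. I would set \(f_1 := f_+\), \(f_2 := f_-\), \(g_1 := g_+\), \(g_2 := g_-\), and take \(d\) to be the common \(\beta\)-quasihomogeneous degree of \(F\) and \(G\); then essentially all that is required is to translate between the language of Lipschitz equivalence of single-variable polynomial functions and the language of the zygothetic group action.

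First I would unwind the definition of \(\cong\). By definition, for polynomial functions \(u,v\from\R\to\R\) one has \(u\cong v\) if and only if there exist a bi-Lipschitz homeomorphism \(\phi\in{\cal L}\) and a constant \(c>0\) with \(v\circ\phi = c\,u\). Applying this to the four pairs that occur, condition (i) of the Corollary --- namely \((f_+\cong g_+\) and \(f_-\cong g_-)\) or \((f_+\cong g_-\) and \(f_-\cong g_+)\) --- becomes, verbatim, the assertion that there exist \(\phi_1,\phi_2\in{\cal L}\) and \(c_1,c_2>0\) with \((g_+\circ\phi_1 = c_1 f_+\) and \(g_-\circ\phi_2 = c_2 f_-)\) or \((g_-\circ\phi_1 = c_1 f_+\) and \(g_+\circ\phi_2 = c_2 f_-)\). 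This is precisely condition (i) of the preceding Proposition for the data \(f_1 = f_+\), \(f_2 = f_-\), \(g_1 = g_+\), \(g_2 = g_-\). Meanwhile, condition (ii) of the Corollary is word for word condition (ii) of that Proposition for the same data. Hence the equivalence of (i) and (ii) follows at once from the Proposition, and nothing further is needed.

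I do not expect any genuine obstacle here; the substantive content has already been dispatched inside the proof of the preceding Proposition. The only points that warrant a moment's care are bookkeeping ones: matching the order of composition in the relation \(v\circ\phi = c\,u\) with the roles of \(f_j\) and \(g_j\); observing that a positive scalar \(c_j\) corresponds to \(\lambda_j = \pm c_j^{-1/d}\), so that the sign of \(\lambda\) records which of the two pairings holds (positive for \(f_+\cong g_+\), \(f_-\cong g_-\); negative for \(f_+\cong g_-\), \(f_-\cong g_+\)); and noting that the constraint \(\lambda_1\lambda_2>0\) defining \(H\) is automatically satisfied in either case, since \(\lambda_1\) and \(\lambda_2\) are assigned the same sign. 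With these identifications, the Corollary is simply a restatement of the Proposition.
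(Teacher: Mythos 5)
Your proof is correct and matches the paper's intent: the paper states this result as an unproved corollary immediately following the proposition, signalling exactly the immediate specialization you carry out (substituting the height functions for the generic \(f_1,f_2,g_1,g_2\) and unwinding the definition of \(\cong\)). Nothing more is needed.
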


\subsection{\(\beta\)-regular zygotheties and the inverse \(\beta\)-transform}
\label{subsection: beta-transitions and the inverse beta-transform}

In this subsection, we show that if the height functions of two \(\beta\)-quasihomogeneous polynomials \(F,G\in\R[X,Y]\) of degree \(d\), neither of which is of the form \(cX^d\), are related via a zygothety \((\lambda,\phi)\) satisfying a specific condition, called  \(\beta\)-regularity, then \(F\) and \(G\) are \({\cal R}\)-semialgebraically Lipschitz equivalent (see Theorem \ref{thm: beta-regular zygothety, R-semialgebraic Lipschitz equivalence}).

\begin{definition}
A zygothety \((\lambda,\phi)\) is said to be {\it \(\beta\)-regular} if the following conditions are satisfied:
\begin{enumerate}[label = \roman*.]
\item The limits\/ 
\(\lim_{\abs{t}\to+\infty} \phi_1(t)/t\) 
\,and 
\,\(\lim_{\abs{t}\to+\infty} \phi_2(t)/t\)
exist and are nonzero.
\item \(\displaystyle 
   \abs{\lambda_1}^\beta\cdot\lim_{\abs{t}\to+\infty} \frac{\phi_1(t)}{t} = 
\abs{\lambda_2}^\beta\cdot\lim_{\abs{t}\to+\infty} \frac{\phi_2(t)}{t}\)
\end{enumerate}
\end{definition}

\begin{proposition}
The \(\beta\)-regular zygotheties form a subgroup of the zygothetic group.
\end{proposition}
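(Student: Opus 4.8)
The plan is to use the subgroup criterion: show that the identity zygothety is $\beta$-regular, and that the set of $\beta$-regular zygotheties is closed under the operation $\circ$ and under taking inverses in the zygothetic group. The identity element of $(H\times K,\circ)$ coincides with the identity of the direct product, namely $((1,1),(\id_\R,\id_\R))$; for it $\phi_1(t)/t=\phi_2(t)/t\equiv 1$, so both limits in condition (i) exist and equal $1\neq 0$, and condition (ii) reads $\abs{1}^\beta\cdot 1 = \abs{1}^\beta\cdot 1$. Hence the identity is $\beta$-regular, and the real work is the two closure statements.

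The single analytic ingredient is an elementary limit lemma, which I would state and prove first: if $\phi,\psi\from\R\to\R$ are bi-Lipschitz homeomorphisms with $\lim_{\abs{t}\to+\infty}\phi(t)/t=\ell\neq 0$ and $\lim_{\abs{t}\to+\infty}\psi(t)/t=m\neq 0$, then $\lim_{\abs{t}\to+\infty}(\psi\circ\phi)(t)/t=m\ell$ and $\lim_{\abs{t}\to+\infty}\phi^{-1}(t)/t=1/\ell$. The first follows from $(\psi\circ\phi)(t)/t=\bigl(\psi(\phi(t))/\phi(t)\bigr)\cdot\bigl(\phi(t)/t\bigr)$ together with the fact that $\abs{\phi(t)}\to+\infty$ as $\abs{t}\to+\infty$ (since $\phi$ is bi-Lipschitz), so that $\psi(\phi(t))/\phi(t)\to m$; the second is the substitution $s=\phi^{-1}(t)$ in $\phi(s)/s$. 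Everything after this is bookkeeping with the defining formula for $\circ$ and the absolute values in condition (ii).

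For closure under composition, let $(\lambda,\phi)$ and $(\mu,\psi)$ be $\beta$-regular, set $\ell_i:=\lim_{\abs{t}\to+\infty}\phi_i(t)/t$ and $m_i:=\lim_{\abs{t}\to+\infty}\psi_i(t)/t$, and split according to the sign of $\lambda$. If $\lambda>0$, then $(\mu,\psi)\circ(\lambda,\phi)=\bigl((\lambda_1\mu_1,\lambda_2\mu_2),(\psi_1\circ\phi_1,\psi_2\circ\phi_2)\bigr)$; by the lemma its $\phi$-components have limits $m_1\ell_1,m_2\ell_2$, both nonzero (condition (i)), and condition (ii) for the composite, $\abs{\lambda_1\mu_1}^\beta m_1\ell_1=\abs{\lambda_2\mu_2}^\beta m_2\ell_2$, is obtained by multiplying the hypotheses $\abs{\lambda_1}^\beta\ell_1=\abs{\lambda_2}^\beta\ell_2$ and $\abs{\mu_1}^\beta m_1=\abs{\mu_2}^\beta m_2$ and using $\abs{\lambda_i\mu_i}^\beta=\abs{\lambda_i}^\beta\abs{\mu_i}^\beta$. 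If $\lambda<0$, the composite is $\bigl((\lambda_1\mu_2,\lambda_2\mu_1),(\psi_2\circ\phi_1,\psi_1\circ\phi_2)\bigr)$; the limits of the $\phi$-part are $m_2\ell_1,m_1\ell_2$, and condition (ii), $\abs{\lambda_1\mu_2}^\beta m_2\ell_1=\abs{\lambda_2\mu_1}^\beta m_1\ell_2$, follows by substituting the two hypotheses into each other in the appropriate order (replace $\abs{\lambda_1}^\beta\ell_1$ by $\abs{\lambda_2}^\beta\ell_2$, then $\abs{\mu_2}^\beta m_2$ by $\abs{\mu_1}^\beta m_1$).

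For closure under inverses, I would first record the explicit inverse in $(H\times K,\circ)$: if $\lambda>0$ then $(\lambda,\phi)^{-1}=\bigl((\lambda_1^{-1},\lambda_2^{-1}),(\phi_1^{-1},\phi_2^{-1})\bigr)$, and if $\lambda<0$ then $(\lambda,\phi)^{-1}=\bigl((\lambda_2^{-1},\lambda_1^{-1}),(\phi_2^{-1},\phi_1^{-1})\bigr)$ --- each verified by a one-line substitution into the formula for $\circ$ (this is also what the expression $\alpha(g^{-1})(g^{-1})$ from the proof of Proposition \ref{prop: zygothetic group} yields). The lemma then gives that the $\phi$-components of the inverse have limits among $1/\ell_1,1/\ell_2$, which are nonzero (condition (i)), and condition (ii) for the inverse is precisely the reciprocal of condition (ii) for $(\lambda,\phi)$, because $(\abs{\lambda_i}^{-1})^\beta\cdot(1/\ell_i)=(\abs{\lambda_i}^\beta\ell_i)^{-1}$. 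The main (in fact only) obstacle is keeping the case split on the sign of $\lambda$ consistent across the group operation, the inverse, and condition (ii), so that the absolute values and the index swaps line up; once the elementary limit lemma is in hand, each case reduces to multiplying or inverting the two scalar identities.
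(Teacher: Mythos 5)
Your proof is correct and takes essentially the same approach as the paper's: verify that the identity zygothety is $\beta$-regular, then closure under composition and under inverses, with the same two analytic facts (that $\lim (\psi\circ\phi)(t)/t$ factors as a product of limits using $\abs{\phi(t)}\to+\infty$, and that $\lim \phi^{-1}(t)/t$ is the reciprocal). The only cosmetic difference is that you isolate these facts as a lemma and spell out the $\lambda<0$ case, which the paper dispatches as ``analogous.''
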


\begin{proof}
It is immediate that the \(\beta\)-regularity conditions are satisfied for \(\lambda_1=\lambda_2=1\) and \(\phi_1=\phi_2=\id_\R\). Hence, the identity element \(\hat\imath \coloneqq ((1,1),(\id_\R,\id_\R))\) of the zygothetic group is \(\beta\)-regular.

Now, we show that if \((\lambda,\phi)\) and \((\mu,\psi)\) are \(\beta\)-regular zygotheties, then \((\mu,\psi)\circ (\lambda,\phi)\) is a \(\beta\)-regular zygothety. Let \((\lambda,\phi) = ((\lambda_1,\lambda_2),(\phi_1,\phi_2))\) 
\,and 
\,\((\mu,\psi) = ((\mu_1, \mu_2),(\psi_1,\psi_2))\) 
be \(\beta\)-regular zygotheties.
By definition,
\begin{equation*}
(\mu,\psi)\circ (\lambda,\phi) = 
\begin{cases}
((\lambda_1\mu_1,\lambda_2\mu_2),(\psi_1\circ\phi_1,\psi_2\circ\phi_2)), &\text{if }\lambda > 0,\\
((\lambda_1\mu_2,\lambda_2\mu_1),(\psi_2\circ\phi_1,\psi_1\circ\phi_2)), &\text{if }\lambda < 0.\\
\end{cases}
\end{equation*}

Suppose that \(\lambda > 0\). For \(i = 1,2\), we have:
\begin{equation*}
\lim_{\abs{t}\to+\infty}\frac{\psi_i(\phi_i(t))}{t} = 
    \lim_{\abs{t}\to+\infty} \frac{\psi_i(\phi_i(t))}{\phi_i(t)} 
        \cdot \lim_{\abs{t}\to+\infty} \frac{\phi_i(t)}{t} = 
    \lim_{\abs{t}\to+\infty} \frac{\psi_i(t)}{t} 
        \cdot \lim_{\abs{t}\to+\infty} \frac{\phi_i(t)}{t}
    \neq 0.
\end{equation*}
Also, we have:  
\begin{multline*}
\abs{\lambda_1\mu_1}^\beta \lim_{\abs{t}\to+\infty} \frac{\psi_1(\phi_1(t))}{t}
   = \abs{\lambda_1\mu_1}^\beta \lim_{\abs{t}\to+\infty} \frac{\psi_1(t)}{t} 
        \cdot \lim_{\abs{t}\to+\infty} \frac{\phi_1(t)}{t}
   = \left(\abs{\mu_1}^\beta\frac{\psi_1(t)}{t} \right)\cdot
         \left(\abs{\lambda_1}^\beta\frac{\phi_1(t)}{t} \right)\\[\lineskip]
   = \left(\abs{\mu_2}^\beta\frac{\psi_2(t)}{t} \right)\cdot
         \left(\abs{\lambda_2}^\beta\frac{\phi_2(t)}{t} \right)
   = \abs{\lambda_2\mu_2}^\beta \lim_{\abs{t}\to+\infty} \frac{\psi_2(t)}{t} 
        \cdot \lim_{\abs{t}\to+\infty} \frac{\phi_2(t)}{t}
   =\abs{\lambda_2\mu_2}^\beta 
           \lim_{\abs{t}\to+\infty} \frac{\psi_2(\phi_2(t))}{t}.
\end{multline*}
Hence, \((\mu,\psi)\circ (\lambda,\phi)\) \(\beta\)-regular. The proof for \(\lambda < 0\) is analogous.

Finally, we show that if \((\lambda,\phi)\) is a \(\beta\)-regular zygothety, then \((\lambda,\phi)^{-1}\) is also \(\beta\)-regular. Let \((\lambda,\phi) = ((\lambda_1,\lambda_2),(\phi_1,\phi_2))\) be a \(\beta\)-regular zygothety. 
The inverse zygothety of \((\lambda,\phi)\) is given by:\footnote{See the end of the proof of Proposition \ref{prop: zygothetic group}}
\begin{equation*}
(\lambda,\phi)^{-1} = 
\begin{cases}
((\lambda_1^{-1},\lambda_2^{-1}),(\phi_1^{-1},\phi_2^{-1}))
     &\text{if }\lambda > 0,\\
((\lambda_2^{-1},\lambda_1^{-1}),(\phi_2^{-1},\phi_1^{-1}))
     &\text{if }\lambda < 0.\\
\end{cases}
\end{equation*}

Suppose that \(\lambda > 0\). For \(i=1,2\), we have:
\begin{equation*}
\lim_{\abs{t}\to+\infty} \frac{\phi_i(t)}{t}  
   =\lim_{\abs{t}\to+\infty} \frac{\phi_i(\phi_i^{-1}(t))}{\phi_i^{-1}(t)}
   =\lim_{\abs{t}\to+\infty} \frac{t}{\phi_i^{-1}(t)}\,,
\end{equation*}
because \(\lim_{\abs{t}\to+\infty} \abs{\phi_i^{-1}(t)} = +\infty\).
Hence, 
\begin{equation*}
\lim_{\abs{t}\to+\infty} \frac{\phi_i^{-1}(t)}{t} = 
\left(\lim_{\abs{t}\to+\infty} \frac{\phi_i(t)}{t}\right)^{-1} \neq 0,
\quad\text{for }i=1,2.
\end{equation*}
Also,  
\begin{multline*}
\abs{\lambda_1^{-1}}^\beta \lim_{\abs{t}\to+\infty} \frac{\phi_1^{-1}(t)}{t}
   = \abs{\lambda_1}^{-\beta} 
       \left(\lim_{\abs{t}\to+\infty} \frac{\phi_1(t)}{t}\right)^{-1}
   = \left(\abs{\lambda_1}^\beta\lim_{\abs{t}\to+\infty} \frac{\phi_1(t)}{t}\right)^{-1}\\
   = \left(\abs{\lambda_2}^\beta\lim_{\abs{t}\to+\infty} \frac{\phi_2(t)}{t}\right)^{-1}
   = \abs{\lambda_2}^{-\beta} 
       \left(\lim_{\abs{t}\to+\infty} \frac{\phi_2(t)}{t}\right)^{-1}
   = \abs{\lambda_2^{-1}}^\beta 
       \lim_{\abs{t}\to+\infty} \frac{\phi_2^{-1}(t)}{t}.   
\end{multline*}
Hence, \((\lambda,\phi)^{-1}\) is \(\beta\)-regular. 
The proof for \(\lambda < 0\) is analogous.
\end{proof}

\begin{definition}
\label{def: inverse beta-transform}
Given a \(\beta\)-regular zygothety \((\lambda,\phi) = ((\lambda_1,\lambda_2),(\phi_1,\phi_2))\), we define a map 
\(\Phi\from\R^2\to\R^2\) by setting:
\begin{itemize}
\item \(\Phi(x,t\abs{x}^\beta) \coloneqq 
\left(\lambda_1 x, \abs{\lambda_1}^\beta \phi_1(t) \abs{x}^\beta\right),\;
\text{for } x > 0,\, t\in\R\);

\item \(\Phi(x,t\abs{x}^\beta) \coloneqq 
\left(\lambda_2 x, \abs{\lambda_2}^\beta \phi_2(t) \abs{x}^\beta\right),\;
\text{for } x < 0,\, t\in\R\);

\item \(\displaystyle\Phi(0,y) \coloneqq 
\left(0,\abs{\lambda_1}^\beta\lim_{\abs{t}\to+\infty} \frac{\phi_1(t)}{t}\,y\right)
= 
\left(0,\abs{\lambda_2}^\beta\lim_{\abs{t}\to+\infty} \frac{\phi_2(t)}{t}\,y\right),
\;\text{for all }y\in\R\).
\end{itemize}
The germ 
\(\Phi\from(\R^2,0)\to(\R^2,0)\) determined by the map \(\Phi\) 
is called the {\it inverse \(\beta\)-transform} of \((\lambda,\phi)\).
\end{definition}

\begin{proposition}
The inverse \(\beta\)-transform of the identity zygothety \(\hat\imath\) is the germ of the identity map \(I\from (\R^2,0)\to(\R^2,0)\).
\end{proposition}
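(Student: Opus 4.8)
The plan is to unfold Definition~\ref{def: inverse beta-transform} in the special case $(\lambda,\phi) = \hat\imath = ((1,1),(\id_\R,\id_\R))$ and verify directly that the map $\Phi\from\R^2\to\R^2$ it produces is the identity map of $\R^2$; the germ it determines is then automatically the germ $I$ of the identity.

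First I would record the relevant data. Since $\lambda_1 = \lambda_2 = 1$, we have $\abs{\lambda_1}^\beta = \abs{\lambda_2}^\beta = 1$. Since $\phi_1 = \phi_2 = \id_\R$, we have $\phi_i(t) = t$ for all $t\in\R$, so $\phi_i(t)/t = 1$ for every $t\in\R\setminus\{0\}$, and hence $\lim_{\abs{t}\to+\infty}\phi_i(t)/t = 1$ for $i = 1,2$. Substituting these values into the three clauses of Definition~\ref{def: inverse beta-transform}:
\begin{itemize}
\item for $x > 0$ and $t\in\R$, $\Phi(x,t\abs{x}^\beta) = (1\cdot x,\,1\cdot t\cdot\abs{x}^\beta) = (x,t\abs{x}^\beta)$;
\item for $x < 0$ and $t\in\R$, $\Phi(x,t\abs{x}^\beta) = (1\cdot x,\,1\cdot t\cdot\abs{x}^\beta) = (x,t\abs{x}^\beta)$;
\item for $y\in\R$, $\Phi(0,y) = (0,\,1\cdot 1\cdot y) = (0,y)$.
\end{itemize}

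Finally I would observe that every point of $\R^2$ lies in the domain of exactly one of these three clauses: a point $(x,y)$ with $x\neq 0$ equals $(x,t\abs{x}^\beta)$ for $t \coloneqq y/\abs{x}^\beta$, and a point $(0,y)$ is covered by the third clause. In each case $\Phi$ fixes the point, so $\Phi = \id_{\R^2}$; consequently the germ $\Phi\from(\R^2,0)\to(\R^2,0)$ determined by the map $\Phi$ coincides with $I$. There is no genuine obstacle in this argument; the only point requiring a moment's attention is the evaluation of the limit appearing in the third clause, which is immediate since $\phi_1 = \id_\R$.
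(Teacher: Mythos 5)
Your proof is correct and takes essentially the same approach as the paper's: unfold Definition~\ref{def: inverse beta-transform} at \((\lambda,\phi)=\hat\imath\) and check clause by clause that \(\Phi\) is the identity. The paper merely states this verification in one line, whereas you carry it out explicitly, which is fine.
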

\begin{proof}
The result follows by directly applying Definition \ref{def: inverse beta-transform} and verifying that each case of \(\Phi\) matches the identity map when \((\lambda, \phi)\) is the identity \(\beta\)-transition. 
\end{proof}

\begin{proposition}
Let \((\lambda,\phi)\) and \((\mu,\psi)\) be \(\beta\)-regular zygotheties,
let \(\Phi\) and \(\Psi\) be their respective inverse \(\beta\)-transforms, and let 
\(Z\) be the inverse \(\beta\)-transform of \((\mu,\psi)\circ(\lambda,\phi)\).
Then \(Z = \Psi\circ\Phi\).
\end{proposition}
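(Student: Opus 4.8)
The plan is to prove $Z=\Psi\circ\Phi$ by evaluating both sides directly, point by point, straight from Definition \ref{def: inverse beta-transform}. First I would record that, since the $\beta$-regular zygotheties form a subgroup of the zygothetic group, the composite $(\mu,\psi)\circ(\lambda,\phi)$ is again a $\beta$-regular zygothety, so its inverse $\beta$-transform $Z$ is defined; moreover $\Phi$, $\Psi$, and $Z$ are genuine maps $\R^2\to\R^2$, so it suffices to check $Z(p)=\Psi(\Phi(p))$ for every $p\in\R^2$, after which the equality of germs at the origin is immediate. I would then partition $\R^2$ into the right half-plane $\{x>0\}$, the left half-plane $\{x<0\}$, and the vertical line $\{x=0\}$, using that every point with $x\neq 0$ is uniquely of the form $(x,t\abs{x}^\beta)$.

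Write $(\lambda,\phi)=((\lambda_1,\lambda_2),(\phi_1,\phi_2))$ and $(\mu,\psi)=((\mu_1,\mu_2),(\psi_1,\psi_2))$, and recall that $\lambda_1,\lambda_2$ share a sign, as do $\mu_1,\mu_2$. For $x>0$ the definition gives $\Phi(x,t\abs{x}^\beta)=(\lambda_1 x,\abs{\lambda_1}^\beta\phi_1(t)\abs{x}^\beta)=(\lambda_1 x,\phi_1(t)\abs{\lambda_1 x}^\beta)$, so this is a point of the form $(x',s\abs{x'}^\beta)$ with $x'=\lambda_1 x$ and $s=\phi_1(t)$, and the sign of the intermediate first coordinate $\lambda_1 x$ equals the sign of $\lambda$. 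Applying the matching branch of the definition of $\Psi$ and substituting $\abs{\lambda_1 x}^\beta=\abs{\lambda_1}^\beta\abs{x}^\beta$ then yields $(\lambda_1\mu_1 x,\abs{\lambda_1\mu_1}^\beta(\psi_1\circ\phi_1)(t)\abs{x}^\beta)$ when $\lambda>0$ and $(\lambda_1\mu_2 x,\abs{\lambda_1\mu_2}^\beta(\psi_2\circ\phi_1)(t)\abs{x}^\beta)$ when $\lambda<0$; reading off the first component of $(\mu,\psi)\circ(\lambda,\phi)$ from the definition of the group law $\circ$, this is exactly $Z(x,t\abs{x}^\beta)$. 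The case $x<0$ is the same argument with the indices $1$ and $2$ interchanged throughout the data of $\Phi$ (so $\lambda_2,\phi_2$ appear, and the intermediate first coordinate $\lambda_2 x$ now has sign \emph{opposite} to $\lambda$), and it again produces precisely $Z(x,t\abs{x}^\beta)$ in both the direct and reverse cases.

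For the vertical line I would use the $\beta$-regularity hypothesis. Set $a\coloneqq\abs{\lambda_1}^\beta\lim_{\abs{t}\to+\infty}\phi_1(t)/t$ and $b\coloneqq\abs{\mu_1}^\beta\lim_{\abs{t}\to+\infty}\psi_1(t)/t$; by $\beta$-regularity these limits exist and are nonzero, and equal the analogous expressions with subscript $2$. Then $\Psi(\Phi(0,y))=\Psi(0,ay)=(0,bay)$. On the other hand $Z(0,y)=(0,cy)$ with $c=\abs{\nu_1}^\beta\lim_{\abs{t}\to+\infty}\zeta_1(t)/t$, where $(\nu,\zeta)=(\mu,\psi)\circ(\lambda,\phi)$. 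Since $\phi_1$ is a bi-Lipschitz homeomorphism, $\abs{\phi_1(t)}\to+\infty$, so $\lim\zeta_1(t)/t$ factors as $\bigl(\lim_{\abs{s}\to+\infty}\psi_j(s)/s\bigr)\cdot\bigl(\lim_{\abs{t}\to+\infty}\phi_1(t)/t\bigr)$, where $j=1$ if $\lambda>0$ and $j=2$ if $\lambda<0$, with $\nu_1=\lambda_1\mu_1$ or $\nu_1=\lambda_1\mu_2$ correspondingly; using $\abs{\mu_2}^\beta\lim\psi_2(s)/s=\abs{\mu_1}^\beta\lim\psi_1(s)/s=b$ (the second defining equality for $b$, needed only when $j=2$) one obtains $c=ba$. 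Hence $Z(0,y)=(0,bay)=\Psi(\Phi(0,y))$, and combining the three cases gives $Z=\Psi\circ\Phi$.

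The computation is routine; the only real content — and the point that demands care — is the sign bookkeeping in the \emph{reverse} case $\lambda<0$, where $\Phi$ carries a point of $\{x>0\}$ into $\{x<0\}$ (and conversely), so the subsequent application of $\Psi$ invokes its \emph{second} components $\mu_2,\psi_2$ rather than $\mu_1,\psi_1$. This is exactly the twist built into the group operation $\circ$ on the zygothetic group, and checking that the twist produced by composing $\Phi$ with $\Psi$ matches the twist appearing in $(\mu,\psi)\circ(\lambda,\phi)$ is the crux of the argument.
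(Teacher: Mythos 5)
Your proof is correct and takes the same approach the paper intends: a direct case-by-case verification from the definition of the inverse $\beta$-transform, split over the two open half-planes and the $y$-axis, with the sign of $\lambda$ governing which branch of $\Psi$ is invoked. The paper's own proof simply asserts this verification without carrying it out, so your write-up is a faithfully detailed version of the same argument.
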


\begin{proof}
This follows directly from Definition \ref{def: inverse beta-transform} by verifying the composition of the maps case by case.
\end{proof}

\begin{corollary}
\label{cor: Inverse beta-transform of the inverse of a beta-transition}
Let \((\lambda, \phi)\) be a \(\beta\)-transition with inverse \(\beta\)-transform \(\Phi\).  Then the inverse \(\beta\)-transform of the inverse zygothety \((\lambda,\phi)^{-1}\) is \(\Phi^{-1}\).
\end{corollary}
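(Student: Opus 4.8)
The plan is to deduce the statement formally from the three facts just established about the inverse $\beta$-transform: that it sends the identity zygothety $\hat\imath$ to the germ $I$ of the identity map of $\R^2$; that it is compatible with composition, so that the inverse $\beta$-transform of $(\mu,\psi)\circ(\lambda,\phi)$ is $\Psi\circ\Phi$; and that the $\beta$-regular zygotheties form a subgroup of the zygothetic group, so that $(\lambda,\phi)^{-1}$ is again $\beta$-regular and therefore possesses a well-defined inverse $\beta$-transform, which I will denote $\Phi'$.

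First I would apply the composition proposition with $(\lambda,\phi)$ in the first slot and $(\lambda,\phi)^{-1}$ in the second: since $(\lambda,\phi)^{-1}\circ(\lambda,\phi)=\hat\imath$, the proposition gives that $\Phi'\circ\Phi$ is the inverse $\beta$-transform of $\hat\imath$, which is $I$; hence $\Phi'\circ\Phi=I$. Then I would apply the same proposition with the roles reversed, namely $(\lambda,\phi)^{-1}$ in the first slot and $(\lambda,\phi)$ in the second, so that from $(\lambda,\phi)\circ(\lambda,\phi)^{-1}=\hat\imath$ we obtain $\Phi\circ\Phi'=I$ as well. Having exhibited a two-sided inverse, $\Phi'$ coincides with $\Phi^{-1}$ as a germ $(\R^2,0)\to(\R^2,0)$, which is exactly the claim.

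The argument is essentially formal, and the only points requiring a little care are bookkeeping ones. One must verify that $(\lambda,\phi)^{-1}$ is indeed $\beta$-regular before speaking of its inverse $\beta$-transform — this is precisely the content of the proposition that the $\beta$-regular zygotheties form a subgroup. One must also keep track of the order in which arguments are fed into the twisted operation $\circ$ of the zygothetic group as opposed to the order of composition of the resulting germs, since $\circ$ is not the direct-product operation; but the composition proposition already records the correct matching $(\mu,\psi)\circ(\lambda,\phi)\mapsto\Psi\circ\Phi$, so it is enough to invoke it with the two orderings described above. I expect no genuine obstacle here: once the functoriality statements are in place, the corollary is their immediate group-theoretic consequence, in the same way that a functor which preserves identities and respects composition automatically sends inverses to inverses.
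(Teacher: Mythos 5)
Your argument is correct and is precisely the intended one: the paper states this as a corollary of the two preceding propositions (the inverse $\beta$-transform sends $\hat\imath$ to $I$ and respects composition) together with the subgroup proposition guaranteeing that $(\lambda,\phi)^{-1}$ is again $\beta$-regular, and supplies no written proof, leaving exactly the formal deduction you give. You have filled in the gap cleanly, including the two points that genuinely need attention — checking that $(\lambda,\phi)^{-1}$ has a well-defined inverse $\beta$-transform, and tracking the order of arguments in $\circ$ versus composition of germs.
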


The next two results provide the final ingredients for the proof of Theorem \ref{thm: beta-regular zygothety, R-semialgebraic Lipschitz equivalence}, the main result of this subsection.

\begin{lemma}
\label{lemma: boundedness of phi(t) - t phi'(t)}
Let \(\phi\from\R\to\R\) be a bi-Lipschitz function such that \(g\circ \phi = f\) for some nonconstant polynomial functions \(f,g\from\R\to\R\). 
Then \(\phi\) is bi-analytic and \(\phi(t) - t\phi^\prime(t)\) is bounded.
\end{lemma}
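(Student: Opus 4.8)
The plan is to dispatch bi-analyticity immediately and then reduce the boundedness of $\phi(t) - t\phi'(t)$ to the statement that the derivative of the auxiliary function $\psi$ of Lemma~\ref{lemma: analytic extension of phi(t)/t} has a finite limit at infinity.

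First I would observe that, since $\phi$ is bi-Lipschitz and $g\circ\phi = f$, the functions $f$ and $g$ are Lipschitz equivalent (with $c = 1$), so $\deg f = \deg g \geq 1$ by Lemma~\ref{lemma: bi-Lipschitz invariance of degree}, and then Lemma~\ref{lemma: bi-Lipschitz iff bi-analytic} (the implication $(i)\implies(iii)$) gives that $\phi$ is bi-analytic. In particular $\phi$ is $C^\infty$, and since $\phi$ is Lipschitz there is a constant $L > 0$ with $\abs{\phi'(t)}\leq L$ for all $t\in\R$.

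For the boundedness, I would invoke Lemma~\ref{lemma: analytic extension of phi(t)/t}, which provides the analytic function $\psi\from\R\to\R$ with $\psi(t) = t\phi(t^{-1})$ for $t\neq 0$ and $\psi(0) = \lambda$. Writing $\phi(s) = s\,\psi(1/s)$ for $s\neq 0$ and differentiating yields the identity
\[
\phi(s) - s\,\phi'(s) = \psi'(1/s), \qquad s\in\R\setminus\{0\},
\]
so it is enough to bound $\psi'$ on $\R$. Since $\psi$ is analytic, $\psi'$ is continuous, hence bounded on compact sets; and from $\psi'(t) = \phi(1/t) - \tfrac{1}{t}\phi'(1/t)$ one sees that $\psi'(t)\to\phi(0)$ as $\abs{t}\to+\infty$ (using continuity of $\phi$ at $0$ together with $\abs{\tfrac{1}{t}\phi'(1/t)}\leq L/\abs{t}$). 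Thus $\psi'$ extends continuously to $\R\cup\{\infty\}\cong{\mathbb P}^1(\R)$ and is therefore bounded; consequently $\phi(s) - s\phi'(s) = \psi'(1/s)$ is bounded for $s\neq 0$, and at $s = 0$ the quantity $\phi(t) - t\phi'(t)$ is just $\phi(0)$, so $\phi(t) - t\phi'(t)$ is bounded on all of $\R$.

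The only step requiring any thought is spotting the identity $\phi(s) - s\phi'(s) = \psi'(1/s)$, which converts the claim into the elementary fact that $\psi'$ has a finite limit at infinity; everything else is routine. Alternatively, one could run the argument through Proposition~\ref{prop: asymptotic formula for phi}: writing $\phi(t) = \lambda t + k + \alpha(t)$ gives $\phi(t) - t\phi'(t) = k + \bigl(\alpha(t) - t\alpha'(t)\bigr)$, and the analytic extension of $\alpha$ to ${\mathbb P}^1(\R)$ with value $0$ at $\infty$ forces $\alpha(t) - t\alpha'(t)\to 0$ as $\abs{t}\to+\infty$, whence the same conclusion.
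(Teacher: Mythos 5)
Your proof is correct and follows essentially the same route as the paper: derive bi-analyticity from Lemmas~\ref{lemma: bi-Lipschitz invariance of degree} and~\ref{lemma: bi-Lipschitz iff bi-analytic}, then use Lemma~\ref{lemma: analytic extension of phi(t)/t} to obtain the key identity \(\phi(t)-t\phi'(t)=\psi'(1/t)\). The paper's closing step is a touch shorter (it deduces \(\lim_{\abs{t}\to\infty}\phi(t)-t\phi'(t)=\psi'(0)\) directly and then cites continuity of \(\phi-\id\cdot\phi'\) on \(\R\)), whereas you first bound \(\psi'\) on all of \(\R\) by additionally checking its limit at infinity; both are sound.
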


\begin{proof}
First, note that by Lemma \ref{lemma: bi-Lipschitz invariance of degree},
\(\deg f = \deg g\), so we can apply Lemma \ref{lemma: bi-Lipschitz iff bi-analytic} to conclude that \(\phi\) is bi-analytic. 
Now, by Lemma \ref{lemma: analytic extension of phi(t)/t}, the function
\(\psi\from\R\to\R\) given by
\begin{equation*}
\psi(t) \coloneqq
\begin{cases}
t\cdot\phi(t^{-1}), &\text{if }t\in\R\setminus\{0\}\\
\lim_{\abs{t}\to+\infty} \phi(t)/t, &\text{if }t = 0\\
\end{cases}
\end{equation*}
is analytic.

From the definition of \(\psi\), it is immediate that
\begin{equation*}
\frac{\phi(t)}{t} = \psi(t^{-1})\quad\text{for all }t\in\R\setminus\{0\}.
\end{equation*}
Differentiating both sides of this identity, we get
\begin{equation*}
\frac{\phi^\prime(t)\cdot t - \phi(t)}{t^2} = -\frac{\psi^\prime(t^{-1})}{t^2}
\quad\text{for all }t\in\R\setminus\{0\}.
\end{equation*}
Equivalently, we have
\begin{equation*}
\phi(t) - t\phi^\prime(t) = \psi^\prime(t^{-1})
\quad\text{for all }t\in\R\setminus\{0\}.
\end{equation*}
Hence,
\begin{equation*}
\lim_{\abs{t}\to+\infty} \phi(t) - t\phi^\prime(t) = \psi^\prime(0).
\end{equation*}
Since the function \(\phi(t) - t\phi^{\prime}(t)\) is continuous on \(\R\), the existence of this limit implies that this function is bounded.
\end{proof}

\begin{lemma}
\label{lemma: Lipschitz on the strip H_delta, right half-plane}
Let \(\phi\from\R\to\R\) be a bi-Lipschitz function such that \(g\circ \phi = f\) for some nonconstant polynomial functions \(f,g\from\R\to\R\), and let \(\lambda\) be a nonzero real number. Then the map 
\(\Phi\from {\cal H}\to\R^2\), defined on the right half-plane 
\({\cal H}\coloneqq \{(x,y)\in\R^2: x > 0\}\) by
\begin{equation*}
\Phi(x,tx^\beta) \coloneqq (\lambda x, \abs{\lambda}^\beta\phi(t)x^\beta)
\end{equation*}
for all \(x>0\) and \(t\in\R\), is Lipschitz on the strip 
\({\cal H}_\delta \coloneqq \{(x,y)\in\R^2 : 0 < x < \delta\}\), for each 
\(\delta > 0\).
\end{lemma}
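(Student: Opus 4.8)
The plan is to reduce the statement to a bound on the Jacobian of $\Phi$ over the open, convex strip ${\cal H}_\delta$, and then to invoke the elementary fact that a $C^1$ map with bounded derivative on a convex set is Lipschitz there. Write $\Phi=(\Phi_1,\Phi_2)$ and, for $(x,y)\in{\cal H}$, put $t\coloneqq y x^{-\beta}$, so that $\Phi_1(x,y)=\lambda x$ and $\Phi_2(x,y)=\abs{\lambda}^\beta h(x,y)$, where $h(x,y)\coloneqq x^\beta\phi(y x^{-\beta})$ is defined on all of ${\cal H}$. The component $\Phi_1$ is linear, hence globally Lipschitz, so everything comes down to showing that $h$ is Lipschitz on ${\cal H}_\delta$. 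By Lemma \ref{lemma: boundedness of phi(t) - t phi'(t)}, $\phi$ is bi-analytic and the function $t\mapsto\phi(t)-t\phi'(t)$ is bounded, say by $M$; and since $\phi$ is bi-Lipschitz, $\abs{\phi'}$ is bounded, say by $L$. In particular $h$ is analytic, hence $C^1$, on the open half-plane ${\cal H}$, and so on ${\cal H}_\delta$.

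Next I would compute the partial derivatives of $h$. A direct calculation gives, for $(x,y)\in{\cal H}$ and $t=y x^{-\beta}$,
\[
\frac{\partial h}{\partial y}(x,y)=\phi'(t),
\qquad
\frac{\partial h}{\partial x}(x,y)=\beta\, x^{\beta-1}\bigl(\phi(t)-t\,\phi'(t)\bigr).
\]
On ${\cal H}_\delta$ we have $0<x<\delta$, so $x^{\beta-1}<\delta^{\beta-1}$ (using $\beta>1$), and therefore
\[
\abs{\frac{\partial h}{\partial y}}\leq L
\qquad\text{and}\qquad
\abs{\frac{\partial h}{\partial x}}\leq\beta\,\delta^{\beta-1}M
\qquad\text{on }{\cal H}_\delta .
\]
Thus $\nabla h$ is bounded on ${\cal H}_\delta$.

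Finally, since ${\cal H}_\delta$ is open and convex, integrating $\nabla h$ along the segment joining any two of its points (which stays inside ${\cal H}_\delta$) shows that $h$ is Lipschitz on ${\cal H}_\delta$, with a constant depending only on $L$, $M$, $\beta$, and $\delta$. Hence $\Phi_2=\abs{\lambda}^\beta h$ is Lipschitz on ${\cal H}_\delta$, and combining this with the global Lipschitz bound for $\Phi_1$ yields that $\Phi$ is Lipschitz on ${\cal H}_\delta$. I do not anticipate a real obstacle here: the single point that genuinely uses the hypotheses is the identity $\partial h/\partial x=\beta x^{\beta-1}(\phi(t)-t\phi'(t))$, which is precisely the quantity that Lemma \ref{lemma: boundedness of phi(t) - t phi'(t)} was designed to control; once that is in hand, the potentially dangerous factor $x^{\beta-1}$ is harmless exactly because we restrict to the bounded strip $0<x<\delta$.
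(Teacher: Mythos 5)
Your proof is correct, and it takes a genuinely different and more streamlined route than the paper's. The paper proceeds by an explicit two-point estimate: it establishes an algebraic identity expressing $\phi(t_2)x_2^\beta-\phi(t_1)x_1^\beta$ as a combination of $x_2-x_1$ and $t_2x_2^\beta-t_1x_1^\beta$, then applies Pompeiu's Mean Value Theorem to the quotient $\frac{t_2\phi(t_1)-t_1\phi(t_2)}{t_2-t_1}$ (which is where the bounded quantity $\phi(\tau)-\tau\phi'(\tau)$ enters) and Lagrange's Mean Value Theorem to $x_2^\beta-x_1^\beta$. Because Pompeiu's theorem requires $t_1$ and $t_2$ to lie on the same side of $0$, the paper must first prove the Lipschitz estimate separately on the upper and lower half-strips and then glue the two estimates across the $x$-axis by a triangle-inequality argument. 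Your argument avoids all of this: by computing $\partial h/\partial y=\phi'(t)$ and $\partial h/\partial x=\beta x^{\beta-1}\bigl(\phi(t)-t\phi'(t)\bigr)$ directly (both computations are correct), you see the same bounded quantity $\phi-t\phi'$ appear automatically as a derivative, bound the full gradient on ${\cal H}_\delta$, and then invoke the elementary fact that a $C^1$ map with bounded derivative on a convex open set is Lipschitz. Since ${\cal H}_\delta=(0,\delta)\times\R$ is convex, no splitting or gluing is needed, and Pompeiu's theorem is not invoked at all. Both approaches rest essentially on Lemma~\ref{lemma: boundedness of phi(t) - t phi'(t)}; yours reaches the conclusion from it more directly.
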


\begin{proof}
Let \(\delta > 0\) be fixed. We prove that \(\Phi\) is Lipschitz on both the upper half-strip \({\cal H}_\delta\cap \{y > 0\}\) and the 
lower half-strip \({\cal H}_\delta\cap \{y < 0\}\). Let us see that this implies the result. Assuming this claim, and using the fact that \(\Phi\) is continuous, we see that there exists a constant \(C > 0\) such that
\begin{equation}
\label{eq: Lipschitz condition for Phi on a quadrant}
\abs{\Phi(x_1,y_1) - \Phi(x_2,y_2)} \leq C\abs{(x_1,y_1) - (x_2,y_2)},
\end{equation} 
whenever \((x_1,y_1)\) and \((x_2,y_2)\) both belong to 
\({\cal H}_\delta\cap\{y\geq 0\}\) or to \({\cal H}_\delta\cap\{y\leq 0\}\).
We show that (\ref{eq: Lipschitz condition for Phi on a quadrant}) still holds for \((x_1,y_1)\in{\cal H}_\delta\cap\{y\geq 0\}\) 
and \((x_2,y_2)\in{\cal H}_\delta\cap\{y\leq 0\}\). 
Let \((\bar{x},0)\) be the point at which the line segment whose endpoints are \((x_1,y_1)\) and \((x_2,y_2)\) intersects the \(x\)-axis. By our assumptions, we have:
\begin{equation*}
\abs{\Phi(x_1,y_1) - \Phi(\bar{x},0)} \leq C\abs{(x_1,y_1) - (\bar{x},0)}
\end{equation*} 
and
\begin{equation*}
\abs{\Phi(\bar{x},0) - \Phi(x_2,y_2) } \leq C\abs{(\bar{x},0) - (x_2,y_2)}.
\end{equation*} 
Hence,
\begin{align*}
\abs{\Phi(x_1,y_1) - \Phi(x_2,y_2)} &\leq 
\abs{\Phi(x_1,y_1) - \Phi(\bar{x},0)} + \abs{\Phi(\bar{x},0) - \Phi(x_2,y_2)}\\
&\leq C\left(\abs{(x_1,y_1) - (\bar{x},0)} + \abs{(\bar{x},0) - (x_2,y_2)}\right)\\
&=C\abs{(x_1,y_1) - (x_2,y_2)}, 
\end{align*}
where the last equality holds because the point \((\bar{x},0)\) lies in the segment whose endpoints are \((x_1,y_1)\) and \((x_2,y_2)\). Therefore, our initial claim implies that \(\Phi\) is Lipschitz on the strip 
\({\cal H}_\delta\).

In order to establish our initial claim, we first show that for each fixed pair of points \((x_1,t_1x_1^\beta)\) and \((x_2,t_2x_2^\beta)\), either both on 
\({\cal H}_\delta\cap\{y > 0\}\) or both on \({\cal H}_\delta\cap\{y < 0\}\),
with \(x_1\neq x_2\)\/ and\/ \(t_1\neq t_2\),
there exist \(\omega\) between \(x_1\) and \(x_2\), and \(\tau\) between \(t_1\) and \(t_2\) such that\footnote{By Lemma \ref{lemma: boundedness of phi(t) - t phi'(t)}, \(\phi\) is bi-analytic. So, in particular, \(\phi^\prime\from\R\to\R\) is a well-defined continuous function.} 
\begin{equation}
\label{eq: algebraic trick}
\phi(t_2)x_2^\beta - \phi(t_1)x_1^\beta = 
(\phi(\tau) - \tau \phi^\prime(\tau))\cdot\beta\omega^{\beta - 1}\cdot(x_2 - x_1) + \frac{\phi(t_2) - \phi(t_1)}{t_2 - t_1}\cdot(t_2x_2^\beta - t_1x_1^\beta)
\end{equation}
In fact, 
\begin{align}
\phi(t_2) x_2^\beta - \phi(t_1) x_1^\beta &= 
(\phi(t_1) + q\cdot (t_2 - t_1)) x_2^\beta - \phi(t_1) x_1^\beta,
\text{ where } q = \frac{\phi(t_2) - \phi(t_1)}{t_2 - t_1}\nonumber\\
&= \phi(t_1)\cdot(x_2^\beta - x_1^\beta) + q\cdot(t_2x_2^\beta - t_1x_2^\beta)\nonumber\\
&= \phi(t_1)\cdot(x_2^\beta - x_1^\beta) + q\cdot(t_2x_2^\beta - t_1x_1^\beta) + q\cdot t_1\cdot(x_1^\beta - x_2^\beta)\nonumber\\
&= (\phi(t_1) - q\cdot t_1)\cdot(x_2^\beta - x_1^\beta) + q\cdot(t_2x_2^\beta - t_1x_1^\beta)\nonumber\\
&= \frac{t_2\phi(t_1) - t_1\phi(t_2)}{t_2 - t_1}\cdot(x_2^\beta - x_1^\beta) + \frac{\phi(t_2) - \phi(t_1)}{t_2 - t_1}\cdot(t_2x_2^\beta - t_1x_1^\beta)
\label{eq: algebraic identity for the second coordinate of the difference}
\end{align}

Since the points \((x_1,t_1x_1^\beta)\) and \((x_2,t_2x_2^\beta)\) are either both on \({\cal H}_\delta\cap\{y > 0\}\) or both on \({\cal H}_\delta\cap\{y < 0\}\), the real numbers \(t_1,t_2\) are either both positive or both negative. Thus, by Pompeiu's Mean Value Theorem \cite[p.~1~--~2]{Drago}, there exists a real number \(\tau\) between \(t_1\) and \(t_2\) such that 
\begin{equation}
\label{eq: Pompeiu's Mean Value Theorem}
\frac{t_2\phi(t_1)-t_1\phi(t_2)}{t_2 - t_1} = 
\phi(\tau) - \tau\phi^\prime(\tau).
\end{equation}

Also, by Lagrange's Mean Value Theorem, there exists a real number 
\(\omega\) between \(x_1\) and \(x_2\) such that
\begin{equation}
\label{eq: Lagrange's Mean Value Theorem}
x_2^\beta - x_1^\beta =\beta\omega^{\beta-1}\cdot (x_2 - x_1).
\end{equation}
Substituting (\ref{eq: Pompeiu's Mean Value Theorem}) and (\ref{eq: Lagrange's Mean Value Theorem}) in (\ref{eq: algebraic identity for the second coordinate of the difference}), we obtain (\ref{eq: algebraic trick}).

Now, since \(\phi\) is Lipschitz, there exists a constant \(C_1 > 0\) such that
\begin{equation*}
\abs{\frac{\phi(t_2) - \phi(t_1)}{t_2 - t_1}} \leq C_1,
\end{equation*}
for \(t_1\neq t_2\). On the other hand, by Lemma \ref{lemma: boundedness of phi(t) - t phi'(t)}, \(\phi(t)-t\phi^\prime(t)\) is bounded, so there exists a constant \(C_2 > 0\) such that
\begin{equation*}
\abs{(\tau\phi^\prime(\tau) - \phi(\tau))\cdot\beta\omega^{\beta - 1}}
\leq C_2,
\end{equation*}
provided that \(0 < x_1,x_2 < \delta\).

Applying these bounds to (\ref{eq: algebraic trick}), we obtain:
\begin{equation*}
\abs{\phi(t_2) x_2^\beta - \phi(t_1) x_1^\beta} \leq 
C\cdot\left(\abs{x_2 - x_1} + \abs{t_2x_2^\beta - t_1x_1^\beta}\right),
\end{equation*}
where \(C = \max\{C_1,C_2\}\), for any pair of points \((x_1,t_1x_1^\beta)\) and \((x_2,t_2x_2^\beta)\), either both on \({\cal H}_\delta\cap{\{y > 0\}}\) or both on \({\cal H}_\delta\cap\{y < 0\}\). Our initial claim follows immediately from this inequality.
\end{proof}

\begin{theorem}
\label{thm: beta-regular zygothety, R-semialgebraic Lipschitz equivalence}
Let \(F, G\in\R[X,Y]\) be \(\beta\)-quasihomogeneous polynomials of degree \(d\) that are not of the form \(cX^d\), and let \(f_+,f_-\) be the height functions of\/ \(F\) and \(g_+,g_-\) be the height functions of\/ \(G\). If\/ \((g_+,g_-)\circ(\lambda,\phi) = (f_+,f_-)\)
for some \(\beta\)-regular zygothety \((\lambda,\phi)\), then the inverse \(\beta\)-transform \(\Phi\from(\R^2,0)\to(\R^2,0)\) is a germ of semialgebraic bi-Lipschitz map and \(G\circ\Phi = F\).
\end{theorem}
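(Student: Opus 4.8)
The plan is to verify the identity $G\circ\Phi=F$ away from the line $X=0$ by a direct quasihomogeneity computation, to extend it to a full neighborhood of the origin by continuity, and to establish the semialgebraic bi-Lipschitz properties of $\Phi$ by combining Lemma~\ref{lemma: Lipschitz on the strip H_delta, right half-plane} with the two $\beta$-regularity conditions. First I would unpack the hypothesis. Since $\lambda\in H$, the numbers $\lambda_1,\lambda_2$ have the same sign, and the equation $(g_+,g_-)\circ(\lambda,\phi)=(f_+,f_-)$ reads $|\lambda_1|^d\,g_+\circ\phi_1=f_+$ and $|\lambda_2|^d\,g_-\circ\phi_2=f_-$ when $\lambda>0$, and $|\lambda_1|^d\,g_-\circ\phi_1=f_+$ and $|\lambda_2|^d\,g_+\circ\phi_2=f_-$ when $\lambda<0$. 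Because neither $F$ nor $G$ is of the form $cX^d$, all four height functions $f_+,f_-,g_+,g_-$ are nonconstant polynomials; hence $\deg f_\pm=\deg g_\pm$ by Lemma~\ref{lemma: bi-Lipschitz invariance of degree}, and each $\phi_i$ is a \emph{semialgebraic} bi-Lipschitz homeomorphism by Lemma~\ref{lemma: semialgebraicity of phi}.

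Next I would prove $G\circ\Phi=F$ off the $Y$-axis. For $x>0$ and $t\in\R$, substituting the definition of $\Phi$ into $G$ and factoring out the positive scalar $|\lambda_1|x$ via $\beta$-quasihomogeneity gives $G(\Phi(x,tx^\beta))=|\lambda_1|^d x^d\,g_\varepsilon(\phi_1(t))$, where $g_\varepsilon=g_+$ if $\lambda_1>0$ and $g_\varepsilon=g_-$ if $\lambda_1<0$; by the relations just displayed this equals $x^d f_+(t)=F(x,tx^\beta)$. The computation for $x<0$ is entirely analogous, using $F(x,t|x|^\beta)=|x|^d f_-(t)$, and yields $G\circ\Phi=F$ on $\{x\neq0\}$. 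Once $\Phi$ is shown to be continuous, $G\circ\Phi$ and $F$ are continuous functions agreeing on the dense set $\{x\neq0\}$, hence $G\circ\Phi=F$ on a neighborhood of the origin.

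It then remains to show $\Phi$ is a germ of semialgebraic bi-Lipschitz homeomorphism. Semialgebraicity is immediate: on each of $\{x>0\}$, $\{x<0\}$, $\{x=0\}$ the map $\Phi$ is given by a formula built from the semialgebraic functions $\phi_i$ and $x\mapsto x^\beta$ ($\beta\in\Q$), and a function semialgebraic on each block of a finite semialgebraic partition is semialgebraic. For the Lipschitz property near $0$, apply Lemma~\ref{lemma: Lipschitz on the strip H_delta, right half-plane} with $\phi=\phi_1$, $\lambda=\lambda_1$, and the appropriate nonconstant height functions in the role of $f,g$, obtaining that $\Phi$ is Lipschitz on $\{0<x<\delta\}$; composing with the isometric reflection $x\mapsto-x$ and applying the lemma again with $\phi=\phi_2$, $\lambda=-\lambda_2$ shows the same on $\{-\delta<x<0\}$. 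On $\{x=0\}$ the map $\Phi$ is linear — and here $\beta$-regularity condition~(ii) is precisely what makes that value well defined. The remaining point is the gluing across $\{x=0\}$: using $\beta$-regularity condition~(i) a short computation shows $\Phi(x,y)\to\Phi(0,y_0)$ as $(x,y)\to(0,y_0)$ from either side, so $\Phi$ is continuous; by passing to limits $\Phi$ is then Lipschitz on each closed half-strip $\{0\le x<\delta\}$ and $\{-\delta<x\le0\}$; and finally, for $p$ with positive and $q$ with negative first coordinate, the segment $[p,q]$ meets $\{x=0\}$ at a point $r$, whence $|\Phi(p)-\Phi(q)|\le|\Phi(p)-\Phi(r)|+|\Phi(r)-\Phi(q)|\le C(|p-r|+|r-q|)=C|p-q|$. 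Thus $\Phi$ is Lipschitz on the neighborhood $\{|x|<\delta\}$ of the origin.

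Finally, since the $\beta$-regular zygotheties form a group, $(\lambda,\phi)^{-1}$ is again $\beta$-regular, and by Corollary~\ref{cor: Inverse beta-transform of the inverse of a beta-transition} its inverse $\beta$-transform is a two-sided inverse of $\Phi$ as a germ, so $\Phi$ is a germ of homeomorphism; moreover $(f_+,f_-)\circ(\lambda,\phi)^{-1}=(g_+,g_-)$ (because $\circ$ is a group action) and $F$ is also not of the form $cX^d$, so the argument just given, applied to $(\lambda,\phi)^{-1}$ with the roles of $F$ and $G$ exchanged, shows $\Phi^{-1}$ is likewise a germ of semialgebraic Lipschitz map. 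Hence $\Phi$ is a germ of semialgebraic bi-Lipschitz map with $G\circ\Phi=F$. I expect the main obstacle to be the gluing across the line $X=0$ — establishing continuity there and promoting the half-strip Lipschitz bounds to a bound across the axis — which is the only place where both $\beta$-regularity conditions enter essentially; the half-strip estimates are supplied by Lemma~\ref{lemma: Lipschitz on the strip H_delta, right half-plane}, and the identity $G\circ\Phi=F$ is a routine quasihomogeneity manipulation.
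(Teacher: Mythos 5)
Your proposal is correct and follows essentially the same route as the paper: semialgebraicity from the formula, Lipschitzness via Lemma~\ref{lemma: Lipschitz on the strip H_delta, right half-plane} on each half-strip, gluing across $\{x=0\}$ using $\beta$-regularity, bi-Lipschitzness via the group structure applied to $(\lambda,\phi)^{-1}$, and $G\circ\Phi=F$ by quasihomogeneity plus continuity. The only notable difference is that you spell out the intermediate-point argument for crossing the $Y$-axis, which the paper leaves implicit after establishing the closed half-strip estimates.
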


\begin{proof}
By definition, the inverse \(\beta\)-transform of the \(\beta\)-regular zygothety \((\lambda,\phi)\) is the germ \(\Phi\from(\R^2,0)\to(\R^2,0)\) of the map
\(\Phi\from\R^2\to\R^2\) defined by:
\begin{itemize}
\item \(\Phi(x,t\abs{x}^\beta) \coloneqq 
\left(\lambda_1 x, \abs{\lambda_1}^\beta \phi_1(t) \abs{x}^\beta\right),\;
\text{for } x > 0,\, t\in\R\);

\item \(\Phi(x,t\abs{x}^\beta) \coloneqq 
\left(\lambda_2 x, \abs{\lambda_2}^\beta \phi_2(t) \abs{x}^\beta\right),\;
\text{for } x < 0,\, t\in\R\);

\item \(\displaystyle\Phi(0,y) \coloneqq 
\left(0,\abs{\lambda_1}^\beta\lim_{\abs{t}\to+\infty} \frac{\phi_1(t)}{t}\,y\right)
= 
\left(0,\abs{\lambda_2}^\beta\lim_{\abs{t}\to+\infty} \frac{\phi_2(t)}{t}\,y\right),
\;\text{for all }y\in\R\).
\end{itemize}
Since \(\phi_1\) and \(\phi_2\) are both semialgebraic functions, it is immediate that \(\Phi\) is a semialgebraic map. Now, we must show that \(\Phi\) is a bi-Lipschitz homeomorphism from a neighborhood of the origin in \(\R^2\) to another neighborhood of the origin in \(\R^2\). Let us start by showing that \(\Phi\) is Lipschitz on the strip \(\{(x,y)\in\R^2 : \abs{x} < \delta\}\), for each \(\delta > 0\). Let \(\delta > 0\) be fixed arbitrarily.
By Lemma \ref{lemma: Lipschitz on the strip H_delta, right half-plane}, there exists a constant \(C_+ > 0\) such that 
\(\Phi\vert_{{\cal H}_{\delta}}\from{\cal H}_{\delta}\to\R^2\) is \(C_+\)-Lipschitz. Since \(\Phi\vert_{{\cal H}_{\delta}}\) is uniformly continuous and takes values in a complete metric space, it has a unique continuous extension 
\(\widetilde\Phi\) to 
\(\widetilde{{\cal H}}_{\delta}\coloneqq \{(x,y)\in\R^2 : 0\leq x < \delta\}\). Let us show that \(\widetilde\Phi = \Phi\vert_{\widetilde{{\cal H}}_{\delta}}\). Obviously, \(\widetilde{\Phi}(x,y) = \Phi(x,y)\) for all 
\((x,y)\in{\cal H}_{\delta}\). For all \(y\in\R\setminus\{0\}\), we have:
\begin{equation*}
\widetilde{\Phi}(0,y) = \lim_{x\to 0^+}\Phi(x,y)\\
= \lim_{x\to 0^+}\left(\lambda_1 x, 
\abs{\lambda_1}^\beta\cdot\frac{\phi_1(y/x^\beta)}{y/x^\beta}\cdot y\right)
= \left(0,
   \abs{\lambda_1}^\beta\cdot\lim_{\abs{t}\to+\infty}\frac{\phi_1(t)}{t}\cdot y\right)
= \Phi(0,y).
\end{equation*}
Finally, 
\begin{equation*}
\widetilde\Phi(0,0) = \lim_{x\to 0^+} \widetilde\Phi(x,0) = \lim_{x\to 0^+} \Phi(x,0) = \Phi(0,0).
\end{equation*}
Hence, \(\widetilde\Phi = \Phi\vert_{\widetilde{{\cal H}}_{\delta}}\). This means that \(\Phi\vert_{\widetilde{{\cal H}}_{\delta}}\) is continuous, and since 
\(\Phi\vert_{{\cal H}_{\delta}}\) is \(C_+\)-Lipschitz, it follows that 
\(\Phi\vert_{\widetilde{\cal H}_{\delta}}\) is \(C_+\)-Lipschitz too.
Similarly, we can prove that there exists \(C_- > 0\) such that 
\(\Phi\vert_{-\widetilde{\cal H}_{\delta}}\from 
-\widetilde{\cal H}_{\delta}\to\R^2\) is \(C_-\)-Lipschitz. Therefore, 
\(\Phi\) is \(C\)-Lipschitz on the strip \(\{(x,y)\in\R^2 : \abs{x} < \delta\}\), where \(C = \max\{C_+,C_-\}\).
Furthermore, since \((f_+,f_-)\circ(\lambda,\phi)^{-1} = (g_+,g_-)\) and 
\(\Phi^{-1}\) is the inverse \(\beta\)-transform of \((\lambda,\phi)^{-1}\), it follows from what we just proved that \(\Phi^{-1}\) is \(C^\prime\)-Lipschitz on the strip \(\{(x,y)\in\R^2 : \abs{x} < \delta\}\), for some constant \(C^\prime > 0\). This completes the proof that \(\Phi\from(\R^2,0)\to(\R^2,0)\) is a germ of semialgebraic bi-Lipschitz map.

Finally, we show that \(G\circ\Phi = F\). From the relation \((g_+,g_-)\circ(\lambda,\phi) = (f_+,f_-)\) and the fact that \(F\) and \(G\) are \(\beta\)-quasihomogeneous of degree \(d\), it follows naturally that
\begin{equation*}
G(\lambda_1 x, \abs{\lambda_1}^\beta\phi_1(t)\abs{x}^\beta) = 
F(x,t\abs{x}^\beta)\;\;\text{for }x > 0,t\in\R;
\end{equation*}
and
\begin{equation*}
G(\lambda_2 x,\abs{\lambda_2}^\beta\phi_2(t)\abs{x}^\beta) = 
F(x,t\abs{x}^\beta)\;\;\text{for }x < 0,t\in\R. 
\end{equation*}
Consequently,
\begin{equation*}
G(\Phi(x,y)) = F(x,y)
\;\;\text{for all }(x,y)\in\R^2, \text{ with }x\neq 0.
\end{equation*}
Since \(\Phi\) is continuous\footnote{Clearly, \(\Phi\) is continuous on the right half-plane \(\{(x,y)\in\R^2: x > 0\}\) and also on the left half-plane \(\{(x,y)\in\R^2: x < 0\}\). Also, \(\Phi\) is Lipschitz (and therefore continuous) on a strip \(\{(x,y)\in\R^2:\abs{x}<\delta\}\). Since the right half-plane, the left half-plane, and the strip around the \(y\)-axis form an open cover of the plane, it follows that \(\Phi\) is continuous.}, we have
\(G(\Phi(x,y)) = F(x,y)\) for all \((x,y)\in\R^2\). 
\end{proof}

\subsection{Some conditions for replacing zygotheties with \(\beta\)-regular zygotheties}
\label{subsection: shifting from proto to beta}

Let \(F,G\in\R[X,Y]\) be \(\beta\)-quasihomogeneous polynomials of degree \(d\). Suppose that the height functions of \(F\) and \(G\) can be arranged into pairs of Lipschitz equivalent functions.
Our goal is to find conditions under which this assumption implies that \(F\) and \(G\) are \({\cal R}\)-semialgebraically Lipschitz equivalent.
By Corollary \ref{cor: condition on height functions in terms of group action}, the height functions \(f_+,f_-\) of \(F\) and the height functions \(g_+,g_-\) of 
\(G\) can be arranged into pairs of Lipschitz equivalent functions if and only if 
\((g_+,g_-)\circ(\lambda,\phi) = (f_+,f_-)\) for some zygothety 
\((\lambda,\phi)\). In general, such a zygothety \((\lambda,\phi)\) is not necessarily \(\beta\)-regular, but since it is not uniquely determined by \((g_+,g_-)\) and \((f_+,f_-)\) we can still ask whether it may be replaced with a \(\beta\)-regular zygothety \((\tilde\lambda,\tilde\phi)\). In this subsection, we are interested in finding conditions under which the answer to this question is affirmative. Then we can apply Theorem \ref{thm: beta-regular zygothety, R-semialgebraic Lipschitz equivalence} to conclude that \(F\) and \(G\) are 
\({\cal R}\)-semialgebraically Lipschitz equivalent. 

We consider separately the case where \(F\) and \(G\) are both of the form 
\(cX^d\) and the case where neither \(F\) nor \(G\) is of this form.\footnote{If the height functions of \(F\) and \(G\) can be arranged into pairs of Lipschitz equivalent functions, then either both \(F\) and \(G\) are of the form \(cX^d\) or neither is of this form (see Corollary \ref{cor: beta-quasihomogeneous polynomials of the form cX^d}). The main reason for considering these two cases separately is that only in the second case are the height functions of \(F\) and \(G\) nonconstant.} In the first case, we can easily determine directly from first principles whether \(F\) and \(G\) are \({\cal R}\)-semialgebraically Lipschitz equivalent. In the second case, we follow the strategy outlined above: assuming that the height functions of \(F\) and \(G\) can be arranged into pairs of Lipschitz equivalent functions, so that \((g_+,g_-)\circ(\lambda,\phi) = (f_+,f_-)\), where \((\lambda,\phi)\) is a zygothety, we find conditions under which we can construct from \((\lambda,\phi)\) 
a \(\beta\)-regular zygothety \((\tilde\lambda,\tilde\phi)\) such that  
\((g_+,g_-)\circ(\tilde\lambda,\tilde\phi) = (f_+,f_-)\).
But first, we show that if the height functions of \(F\) and \(G\) can be arranged into pairs of Lipschitz equivalent functions, then either both 
\(F\) and \(G\) are of the form \(cX^d\), or neither is of this form.
This follows from the next two propositions (see Corollary \ref{cor: beta-quasihomogeneous polynomials of the form cX^d}).

\begin{proposition}
\label{prop: cX^d in terms of the multiplicity of X as a factor}
Let \(P\in\R[X,Y]\) be a \(\beta\)-quasihomogeneous polynomial of degree 
\(d\) and let \(e\) be the multiplicity of \(X\) as a factor of \(P\) in 
\(\R[X,Y]\). Then \(e\leq d\), with equality if and only if \(P\) is of the form 
\(cX^d\).
\end{proposition}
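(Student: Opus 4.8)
The plan is to read $e$ off directly from the monomial description of $P$. Write $\beta = r/s$ with $r > s > 0$ and $\gcd(r,s) = 1$. By the Remark preceding this statement, $P$ has the form $P(X,Y) = \sum_{k=0}^{m} c_k X^{d-rk} Y^{sk}$ with $c_k\in\R$, $c_m \neq 0$, and $0 \le m \le \lfloor d/r \rfloor$. Since $\R[X,Y]$ is a UFD in which $X$ is prime, I would use the fact that the multiplicity $e$ of $X$ as a factor of $P$ equals the smallest exponent of $X$ occurring among the monomials of $P$ with nonzero coefficient; equivalently, $X^e \mid P$ and $X^{e+1} \nmid P$.

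First I would observe that the $X$-exponent $d - rk$ of the $k$-th term is strictly decreasing in $k$, so every monomial of $P$ has $X$-exponent at least $d - rm$ (hence $X^{d-rm} \mid P$), while the term $c_m X^{d-rm} Y^{sm}$, which has nonzero coefficient, has $X$-exponent exactly $d - rm$ (hence $X^{d-rm+1} \nmid P$). Therefore $e = d - rm$.

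From this the two assertions follow immediately. Since $r \ge 1$ and $m \ge 0$, we get $e = d - rm \le d$; and $e = d$ holds if and only if $rm = 0$, i.e.\ (as $r \neq 0$) if and only if $m = 0$. But $m = 0$ means precisely that $P = c_0 X^{d}$ with $c_0 = c_m \neq 0$, that is, that $P$ is of the form $cX^d$.

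I do not expect a genuine obstacle here: the argument reduces to the standard identification of ``multiplicity of $X$ as a factor of $P$'' with ``minimal $X$-degree of a monomial of $P$'', combined with the explicit shape of $\beta$-quasihomogeneous polynomials already recorded in the Remark. The only point deserving a line of justification is that identification. (An alternative, coordinate-free route would note that $Q \coloneqq P/X^{e}$ is itself $\beta$-quasihomogeneous of degree $d - e$ and is a nonzero polynomial, forcing $d - e \ge 0$ --- a $\beta$-quasihomogeneous polynomial of negative degree cannot be a nonzero polynomial, since $Q(tX,t^{\beta}Y)$ stays bounded as $t \to 0^{+}$ whereas $t^{d-e} Q(X,Y)$ would not --- with equality forcing $\deg Q = 0$, hence $Q$ a nonzero constant and $P = cX^{d}$; but reading $e$ off the monomials is the quicker path.)
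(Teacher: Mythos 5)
Your argument is correct and is essentially the same one the paper gives: identify $e$ as $d - rm$ (the paper writes $n$ for your $m$) from the monomial expansion $P = \sum_{k=0}^{m} c_k X^{d-rk} Y^{sk}$ with $c_m \neq 0$, then observe $e \leq d$ with equality iff $m = 0$ iff $P = cX^d$. You merely make explicit the step the paper dismisses with the word ``clearly'' (namely that $e = d - rm$ because the $X$-exponents decrease in $k$ and the $k = m$ term has nonzero coefficient), which is a reasonable addition.
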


\begin{proof}
Since \(P\) is a \(\beta\)-quasihomogeneous polynomial of degree \(d\), we have \(P(X,Y) = \sum_{k = 0}^n c_kX^{d-rk}Y^{sk}\),
where \(c_n\neq 0\) and \(0\leq n\leq \lfloor d/r\rfloor\). 
Then, clearly, \(e = d - rn\). So \(e\leq d\), with equality if and only if \(n = 0\).
Since \(n = 0\) if and only if \(P\) is of the form \(cX^d\), the result follows.
\end{proof}

\begin{proposition}
\label{prop: invariance of the multiplicity of the factor X}
Let \(F,G\in\R[X,Y]\) be \(\beta\)-quasihomogeneous polynomials of degree 
\(d\). Let \(e_F\) and \(e_G\) denote the multiplicities of \(X\) as a factor of \(F\) and \(G\), respectively, so that
\(F(X,Y) = X^{e_F}\cdot\widetilde{F}(X,Y)\) and 
\(G(X,Y) = X^{e_G}\cdot\widetilde{G}(X,Y)\), where
\(X\nmid\widetilde{F}(X,Y)\) and \(X\nmid\widetilde{G}(X,Y)\).
If the height functions of \(F\) and \(G\) can be arranged into pairs of Lipschitz equivalent functions, then:

\begin{enumerate}[label=\roman*.]
\item \(e_F = e_G\)
\item For all \(t > 0\), we have
\(\widetilde{F}(tX,t^\beta Y) = t^{d-e}\widetilde{F}(X,Y)\)
\,and
\,\(\widetilde{G}(tX,t^\beta Y) = t^{d-e}\widetilde{G}(X,Y)\),
where \(e = e_F = e_G\).
\end{enumerate}
\end{proposition}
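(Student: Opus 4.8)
The plan is to deduce part (i) from the fact that Lipschitz equivalence of polynomial functions of one variable preserves the degree (Lemma \ref{lemma: bi-Lipschitz invariance of degree}), and then to obtain part (ii) as a formal consequence of the defining functional equation of $\beta$-quasihomogeneity, using (i).

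First I would fix a representation $\beta = r/s$ with $r > s > 0$ and $\gcd(r,s)=1$, and write
\begin{equation*}
F(X,Y) = \sum_{k=0}^{n_F} a_k X^{d-rk}Y^{sk},
\qquad
G(X,Y) = \sum_{k=0}^{n_G} b_k X^{d-rk}Y^{sk},
\end{equation*}
with $a_{n_F}\neq 0$ and $b_{n_G}\neq 0$. Since the monomials $X^{d-rk}Y^{sk}$ are pairwise distinct and the one of highest $Y$-degree carries the smallest power of $X$, no cancellation can occur there, so (as already noted in the proof of Proposition \ref{prop: cX^d in terms of the multiplicity of X as a factor}) one has $e_F = d - rn_F$ and $e_G = d - rn_G$. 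Thus $e_F = e_G$ is equivalent to $n_F = n_G$, and it suffices to prove the latter.

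Next I would compute the height functions: $f_+(t) = F(1,t) = \sum_{k} a_k t^{sk}$ and $f_-(t) = F(-1,t) = \sum_k (-1)^{d-rk} a_k t^{sk}$, so that $f_+$ and $f_-$ are polynomial functions of degree exactly $sn_F$; likewise $g_+,g_-$ have degree exactly $sn_G$. By hypothesis the height functions can be paired into Lipschitz-equivalent pairs, so there is some $h\in\{g_+,g_-\}$ with $f_+\cong h$. If $f_+$ is nonconstant, then $h$ is nonconstant too and Lemma \ref{lemma: bi-Lipschitz invariance of degree} gives $\deg f_+ = \deg h$; if $f_+$ is constant, then $h$ must also be constant (Lipschitz equivalence carries a constant function only to a constant function), so the two degrees again coincide, both being $0$. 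In every case $sn_F = sn_G$, hence $n_F = n_G$, which establishes (i); set $e := e_F = e_G$.

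For part (ii), I would substitute $F = X^{e}\widetilde F$ into the identity $F(tX,t^\beta Y) = t^d F(X,Y)$, obtaining $t^{e}X^{e}\,\widetilde F(tX,t^\beta Y) = t^{d}X^{e}\,\widetilde F(X,Y)$ for every $t>0$; cancelling the factor $X^{e}$ in the polynomial ring $\R[X,Y]$ yields $\widetilde F(tX,t^\beta Y) = t^{d-e}\widetilde F(X,Y)$, and the same computation applied to $G = X^{e}\widetilde G$ gives the companion identity for $\widetilde G$. The only places requiring genuine care are the identification $e_F = d - rn_F$ (checking that passing to the monomial expansion does not lower the $X$-adic valuation) and the constant-height-function case, where Lemma \ref{lemma: bi-Lipschitz invariance of degree} does not literally apply and one falls back on the description of Lipschitz equivalence for constant functions; neither presents a real obstacle.
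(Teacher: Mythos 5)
Your proof is correct and follows essentially the same route as the paper's: write $F$ and $G$ in the monomial expansions $\sum_k a_k X^{d-rk}Y^{sk}$ and $\sum_k b_k X^{d-rk}Y^{sk}$, identify $e_F = d-rn_F$ and $e_G = d-rn_G$, reduce part (i) to $n_F = n_G$ by comparing degrees of the height functions via Lemma \ref{lemma: bi-Lipschitz invariance of degree}, and prove part (ii) by substituting $F = X^e\widetilde F$ into the quasihomogeneity identity and cancelling $X^e$ in $\R[X,Y]$. The one refinement you add is the explicit treatment of constant height functions, i.e.\ the case where $F$ or $G$ is of the form $cX^d$: the paper invokes Lemma \ref{lemma: bi-Lipschitz invariance of degree} without comment even though that lemma is stated only for nonconstant polynomials, whereas you supply the needed observation that a constant function can only be Lipschitz equivalent to another constant function, so the degree comparison still holds. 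This quietly closes a small technical gap in the paper's argument (a gap that actually matters, since the proposition is subsequently applied in Corollary \ref{cor: beta-quasihomogeneous polynomials of the form cX^d} precisely to decide whether $F$ and $G$ are of the form $cX^d$).
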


\begin{proof}
Let \(\beta = r/s\), where \(r > s > 0\) and \(\gcd(r,s) = 1\). Since \(F\) and \(G\) are \(\beta\)-quasihomogeneous polynomials of degree \(d\), we have:
\begin{equation*}
F(X,Y) = \sum_{k = 0}^m a_k X^{d - rk}Y^{sk}
\quad\text{and}\quad
G(X,Y) = \sum_{k = 0}^n b_k X^{d - rk}Y^{sk},
\end{equation*}
where \(a_m\neq 0\), \(b_n\neq 0\), and \(0\leq m,n\leq \lfloor d/r\rfloor\).
From these equations, it follows that \(e_F = d-rm\) and \(e_G = d-rn\). Thus, in order to prove that \(e_F = e_G\), it suffices to show that \(m = n\).

Let us prove that \(m = n\). Since \(f_+(t) = \sum_{k = 0}^m a_k t^{sk}\) and  
\(f_-(t) = \sum_{k = 0}^m (-1)^{d-rk}a_k t^{sk}\), we have 
\(\deg f_+ = \deg f_- = sm\); and since \(g_+(t) = \sum_{k = 0}^n b_k t^{sk}\) and  \(g_-(t) = \sum_{k = 0}^n (-1)^{d-rk}b_k t^{sk}\), we have 
\(\deg g_+ = \deg g_- = sn\). Also, by hypothesis, \(f_+\) and \(f_-\) are Lipschitz equivalent to \(g_+\) and \(g_-\) in some order. Hence, by Lemma \ref{lemma: bi-Lipschitz invariance of degree}, it follows that \(sm = sn\). Therefore, \(m = n\).

From now on, we drop the subscript and denote simply by \(e\) the multiplicity of \(X\) as a factor of either \(F\) or \(G\). Let us prove the second part of the proposition. Since \(F\) is a \(\beta\)-quasihomogeneous polynomial of degree \(d\), we have:
\begin{equation*}
F(tX,t^\beta Y) = t^d F(X,Y)= t^d X^e \widetilde{F}(X,Y).
\end{equation*}
On the other hand, since the multiplicity of \(X\) as a factor of \(F\) is equal to \(e\), we have:
\begin{equation*}
F(tX,t^\beta Y) = (tX)^e \widetilde{F}(tX, t^\beta Y)
= t^e X^e \widetilde{F}(tX,t^\beta Y).
\end{equation*}
From these equalities, it follows that
\begin{equation*}
\widetilde{F}(tX,t^\beta Y) = t^{d - e} \widetilde{F}(X,Y).
\end{equation*}
Clearly, the deduction above with \(F\) replaced with \(G\) yields the other identity.
\end{proof}

\begin{corollary}
\label{cor: beta-quasihomogeneous polynomials of the form cX^d}
Let \(F,G\in\R[X,Y]\) be \(\beta\)-quasihomogeneous polynomials of degree 
\(d\). If the height functions of \(F\) and \(G\) can be arranged into pairs of Lipschitz equivalent functions, then either both \(F\) and \(G\) are of the form \(cX^d\), or neither of them is.
\end{corollary}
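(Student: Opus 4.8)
The plan is to obtain the statement as an immediate combination of the two preceding propositions, so there is essentially no obstacle: the real work has already been done.

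First I would invoke Proposition \ref{prop: invariance of the multiplicity of the factor X}\,(i): under the hypothesis that the height functions of \(F\) and \(G\) can be arranged into pairs of Lipschitz equivalent functions, the multiplicities \(e_F\) and \(e_G\) of \(X\) as a factor of \(F\) and of \(G\) satisfy \(e_F = e_G\); write \(e \coloneqq e_F = e_G\). Next I would apply Proposition \ref{prop: cX^d in terms of the multiplicity of X as a factor} to each of \(F\) and \(G\) separately: since both are \(\beta\)-quasihomogeneous of degree \(d\), we have \(e_F \le d\) and \(e_G \le d\), with \(F\) of the form \(cX^d\) precisely when \(e_F = d\), and \(G\) of the form \(cX^d\) precisely when \(e_G = d\).

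Finally, since \(e_F = e_G = e\), the equality \(e_F = d\) holds if and only if \(e_G = d\). Translating back through Proposition \ref{prop: cX^d in terms of the multiplicity of X as a factor}, this says that \(F\) is of the form \(cX^d\) if and only if \(G\) is of the form \(cX^d\); equivalently, either both \(F\) and \(G\) are of this form, or neither of them is. The only point requiring any care is that Proposition \ref{prop: invariance of the multiplicity of the factor X} is being applied exactly in the regime where its hypothesis is assumed here, so no additional verification is needed; the argument is purely a chaining of the two cited results.
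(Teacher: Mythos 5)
Your proof is correct and follows exactly the same route as the paper: invoke Proposition \ref{prop: invariance of the multiplicity of the factor X} to get \(e_F = e_G\), then Proposition \ref{prop: cX^d in terms of the multiplicity of X as a factor} to translate the condition of being of the form \(cX^d\) into the equality \(e = d\), and conclude.
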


\begin{proof}
Denote by \(e_F\) the multiplicity of \(X\) as a factor of \(F\), and by \(e_G\) the multiplicity of \(X\) as a factor of \(G\). By Proposition \ref{prop: cX^d in terms of the multiplicity of X as a factor}, \(F\) is of the form \(cX^d\) if and only if \(e_F = d\), and \(G\) is of the form \(cX^d\) if and only if \(e_G = d\). 
Suppose that the height functions of \(F\) and \(G\) can be arranged into pairs of Lipschitz equivalent functions. Then, by Proposition \ref{prop: invariance of the multiplicity of the factor X}, \(e_F = e_G\), so that \(e_F = d\) if and only if \(e_G = d\). Therefore \(F\) is of the form \(cX^d\) if and only if \(G\) is of the form \(cX^d\).
\end{proof}

The next proposition shows how to determine whether any two polynomials of the form \(cX^d\), with \(c\neq 0\) and \(d\geq 1\), are \({\cal R}\)-semialgebraically Lipschitz equivalent.

\begin{proposition}
\label{prop: Sufficient conditions for R-semialg. Lip. equivalence (special case)}
Let \(F(X,Y) = aX^d\) and \(G(X,Y) = bX^d\), where \(a,b\in\R\setminus\{0\}\) and \(d\geq 1\).
\begin{enumerate}[label = \roman*.]
\item If \(d\) is even, then \(F\) and \(G\) are \({\cal R}\)-semialgebraically Lipschitz equivalent if and only if \(a\) and \(b\) have the same sign.
\item If \(d\) is odd, then \(F\) and \(G\) are \({\cal R}\)-semialgebraically Lipschitz equivalent.
\end{enumerate}
\end{proposition}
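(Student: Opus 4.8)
The plan is to dispatch both sufficiency assertions (the ``if'' direction of (i) and all of (ii)) by exhibiting an explicit \emph{linear} map, and to handle the necessity in (i) by a one‑line sign argument; no use of the $\beta$‑transform machinery is needed here.

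First I would treat the constructions. Observe that in both relevant situations the real number \(\rho := (a/b)^{1/d}\) is a well‑defined nonzero real: when \(d\) is odd every nonzero real has a unique real \(d\)‑th root (of the same sign), and when \(d\) is even and \(a\), \(b\) have the same sign, \(a/b>0\) has the positive \(d\)‑th root. I would then define \(\Phi\from\R^2\to\R^2\) by \(\Phi(x,y):=(\rho x,\,y)\). This is a linear automorphism of \(\R^2\) with determinant \(\rho\neq 0\), hence its germ at the origin is a semialgebraic bi-Lipschitz homeomorphism \((\R^2,0)\to(\R^2,0)\). A direct computation gives
\begin{equation*}
G(\Phi(x,y)) = b\,(\rho x)^d = b\,\rho^{d}\,x^{d} = b\cdot\tfrac{a}{b}\,x^{d} = a x^{d} = F(x,y),
\end{equation*}
so \(G\circ\Phi = F\). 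This proves (ii) outright (where \(a/b\) is arbitrary) and the ``if'' direction of (i) (where \(a/b>0\)).

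It remains to prove the ``only if'' direction of (i). Suppose \(d\) is even and that there is a germ of semialgebraic bi-Lipschitz homeomorphism \(\Phi=(\Phi_1,\Phi_2)\from(\R^2,0)\to(\R^2,0)\) with \(G\circ\Phi=F\); that is, \(b\,\Phi_1(x,y)^{d}=a\,x^{d}\) for \((x,y)\) near the origin. Evaluating at a point \((x,0)\) with \(x\neq 0\) small: if \(\Phi_1(x,0)=0\) then \(a x^{d}=0\), contradicting \(a\neq 0\) and \(x\neq 0\); hence \(\Phi_1(x,0)\neq 0\), and since \(d\) is even, \(\Phi_1(x,0)^{d}>0\) and \(x^{d}>0\). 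From \(b\,\Phi_1(x,0)^{d}=a\,x^{d}\) it follows that \(a\) and \(b\) have the same sign.

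I do not expect any genuine obstacle: the construction is just a horizontal scaling (one is free to take \(\Phi_2(x,y)=y\)), and the only points deserving care are the existence of the relevant real \(d\)‑th root in each case and the (standard) fact that a linear isomorphism of \(\R^2\) is semialgebraic and bi-Lipschitz.
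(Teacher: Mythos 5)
Your proposal is correct and takes essentially the same approach as the paper: the sufficiency direction uses the same linear horizontal scaling \(\Phi(x,y)=((a/b)^{1/d}x,\,y)\), and the necessity direction in (i) uses the same sign argument from \(b\,\Phi_1(x,y)^d=ax^d\) with \(d\) even. The only difference is that you spell out the easy detail that \(\Phi_1(x,0)\neq 0\), which the paper leaves implicit.
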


\begin{proof}
(i) Suppose that \(d\) is even. If there exists a germ of semialgebraic bi-Lipschitz homeomorphism \(\Phi\from(\R^2,0)\to(\R^2,0)\) such that 
\(G\circ\Phi = F\) then \(b\cdot\Phi_1(x,y)^d = ax^d\) in a neighborhood of the origin, which implies that \(a\) and \(b\) have the same sign, since \(d\) is even. Now, assuming that \(a\) and \(b\) have the same sign, we have 
\(G\circ\Phi = F\), where \(\Phi(x,y) = \left(\left(\frac{a}{b}\right)^{\frac{1}{d}}\cdot x, y\right)\).

(ii) If \(d\) is odd, then \(G\circ\Phi = F\), where
\(\Phi(x,y) = \left(\left(\frac{a}{b}\right)^{\frac{1}{d}}\cdot x, y\right)\).
\end{proof}
\vspace{0.5cm}

Now, we turn our attention to the case where neither \(F\) nor \(G\) is of the form \(cX^d\).

\begin{lemma}
\label{lemma: limit condition, for e < d}
Let \(F,G\in\R[X,Y]\) be \(\beta\)-quasihomogeneous polynomials of degree 
\(d\) that are not of the form \(cX^d\). Suppose that the height functions \(f_+,f_-\) of\/ \(F\) and the height functions \(g_+,g_-\) of\/ \(G\) can be arranged into pairs of Lipschitz equivalent functions, so that
\((g_+,g_-)\circ(\lambda,\phi) = (f_+,f_-)\), where
\((\lambda,\phi) = ((\lambda_1,\lambda_2),(\phi_1,\phi_2))\)
is a zygothety. Then we have: 
\begin{equation}
\label{eq: limit condition, for e < d - in the statement of the lemma}
\abs{\lambda_1}^{\frac{d\beta}{d - e}}\cdot
   \lim_{\abs{t}\to+\infty} \abs{\frac{\phi_1(t)}{t}} = 
\abs{\lambda_2}^{\frac{d\beta}{d - e}}\cdot
   \lim_{\abs{t}\to+\infty} \abs{\frac{\phi_2(t)}{t}},
\end{equation}
where \(e\) is the multiplicity of \(X\) as a factor of \(F\), and also as a factor of \(G\).\footnote{By hypothesis, the height functions of \/\(F\) and \/\(G\) can be arranged into pairs of Lipschitz equivalent functions. Thus, by Proposition \ref{prop: invariance of the multiplicity of the factor X}, the multiplicity of \(X\) as a factor of \(F\) is equal to the multiplicity of \(X\) as a factor of \(G\). Also, since neither \(F\) nor \(G\) is of the form \(cX^d\), the height functions \(f_+\), \(f_-\), \(g_+\), and \(g_-\) are all nonconstant. Hence, by Lemma \ref{lemma: analytic extension of phi(t)/t}, the limit \(\lim_{\abs{t}\to+\infty} \phi_i(t)/t\) is a well-defined nonzero real number for \(i = 1,2\).}
\end{lemma}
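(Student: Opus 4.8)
The plan is to extract the required limit identity by comparing the leading-order behavior of the Lipschitz equivalences on the height functions with the asymptotics coming from their structure as height functions of $\beta$-quasihomogeneous polynomials. Write $\beta = r/s$ with $\gcd(r,s)=1$, and let $F(X,Y) = X^e\widetilde F(X,Y)$, $G(X,Y) = X^e\widetilde G(X,Y)$ with $X\nmid\widetilde F$, $X\nmid\widetilde G$; by Proposition \ref{prop: invariance of the multiplicity of the factor X} the multiplicity $e$ is common to $F$ and $G$ and $e < d$ since neither is of the form $cX^d$. First I would record the degrees of the height functions: from $f_\pm(t) = F(\pm 1,t)$ and the expansion $F(X,Y) = \sum_{k=0}^m a_k X^{d-rk}Y^{sk}$ with $a_m\neq 0$, one gets $\deg f_+ = \deg f_- = sm$, and likewise $\deg g_+ = \deg g_- = sn$; the relation $(g_+,g_-)\circ(\lambda,\phi) = (f_+,f_-)$ forces (via Lemma \ref{lemma: bi-Lipschitz invariance of degree}) $sm = sn$, so $m = n$, and hence $e = d - rm = d - rn$, i.e. $d - e = rm$. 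This last equation, $d - e = rm$, is what will convert the exponent $\tfrac{d\beta}{d-e} = \tfrac{dr}{s(d-e)} = \tfrac{d}{sm}$ into something tractable.

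Next I would compute the leading coefficient relation. Each of $\phi_1,\phi_2$ is a bi-Lipschitz homeomorphism appearing in an equation of the form $g\circ\phi_i = c_i f$ with $c_i = |\lambda_i|^d > 0$ (read off from the definition of the zygothety action with the integer $d$), where $\{f,g\}$ is an appropriate pairing of $\{f_\pm\}$ with $\{g_\pm\}$; since these are nonconstant polynomials of the same degree $sm$, Lemma \ref{lemma: analytic extension of phi(t)/t} applies and $\ell_i := \lim_{|t|\to+\infty}\phi_i(t)/t$ is a well-defined nonzero real number. Comparing leading coefficients in $g(\phi_i(t)) = |\lambda_i|^d f(t)$ — exactly the computation in the proof of Lemma \ref{lemma: bi-Lipschitz iff bi-analytic}, equation (\ref{eq: limits to the power d}) — gives $\ell_i^{sm} \cdot (\text{lead}(g)) = |\lambda_i|^d \cdot (\text{lead}(f))$. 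The crucial point now is that the leading coefficients of $f_+,f_-$ are both $\pm a_m$ (same absolute value) and those of $g_+,g_-$ are both $\pm b_n$; so regardless of the pairing, $|\ell_i|^{sm} = |\lambda_i|^d \cdot |a_m|/|b_n|$ for both $i = 1,2$. Dividing the $i=1$ relation by the $i=2$ relation kills the constant $|a_m|/|b_n|$ and yields
\begin{equation*}
\frac{|\ell_1|^{sm}}{|\ell_2|^{sm}} = \frac{|\lambda_1|^d}{|\lambda_2|^d},
\qquad\text{i.e.}\qquad
\frac{|\ell_1|}{|\ell_2|} = \left(\frac{|\lambda_1|}{|\lambda_2|}\right)^{d/(sm)}.
\end{equation*}
Finally, using $d - e = rm$ and $\beta = r/s$, we have $\dfrac{d}{sm} = \dfrac{dr}{s r m} = \dfrac{d\beta}{d-e}$, so $|\ell_1|\,/\,|\ell_2| = \bigl(|\lambda_1|/|\lambda_2|\bigr)^{d\beta/(d-e)}$, which rearranges to exactly (\ref{eq: limit condition, for e < d - in the statement of the lemma}).

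I do not expect a serious obstacle here; the argument is essentially bookkeeping. The one place that needs care is the independence of the final identity from which of the two admissible pairings of the height functions actually occurs (i.e. whether $g_+$ is matched with $f_+$ or with $f_-$, etc.): this is handled by the observation that passing from $f_+$ to $f_-$ (or $g_+$ to $g_-$) only changes leading coefficients by a sign, so the absolute values — which are all that appear in the statement — are unaffected. A second minor point worth stating explicitly is why $c_i = |\lambda_i|^d$: this comes directly from unwinding the definition of the group action $\circ$ at the fixed integer $d$, where $(g_1,g_2)\circ(\lambda,\phi)$ has components $|\lambda_i|^d g_{\sigma(i)}\circ\phi_i$. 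With these two remarks in place, the chain of equalities above is routine.
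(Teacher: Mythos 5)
Your core idea --- compare leading coefficients at infinity and eliminate the unknown ratio $\abs{a_m}/\abs{b_n}$ by dividing the $i=1$ and $i=2$ relations --- is sound and is essentially a compressed version of what the paper does. The paper's $\widetilde F(0,1)$ and $\widetilde G(0,1)$ are exactly your leading coefficients $a_m$ and $b_n$; its quasihomogeneity step (moving $\abs{\lambda_i}^d$ into the argument of $\widetilde G$) is your exponent conversion $d/(sm) = d\beta/(d-e)$; and your observation that $\abs{\text{lead}(f_+)} = \abs{\text{lead}(f_-)} = \abs{a_m}$ is how the paper's $(-1)^e$ factor disappears in the $\lambda<0$ case.

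However, there is a concrete error that reverses the final identity. You read the zygothety action as giving $g\circ\phi_i = \abs{\lambda_i}^d f$, but by definition the action produces $\abs{\lambda_i}^d\,g\circ\phi_i = f$ (e.g.\ for $\lambda > 0$: $\abs{\lambda_1}^d\,g_+\circ\phi_1 = f_+$ and $\abs{\lambda_2}^d\,g_-\circ\phi_2 = f_-$), so in fact $g\circ\phi_i = \abs{\lambda_i}^{-d} f$. With your sign, the leading-coefficient comparison yields
\[
\frac{\abs{\ell_1}}{\abs{\ell_2}} = \left(\frac{\abs{\lambda_1}}{\abs{\lambda_2}}\right)^{d\beta/(d-e)},
\qquad\text{i.e.}\qquad
\abs{\lambda_2}^{d\beta/(d-e)}\abs{\ell_1} = \abs{\lambda_1}^{d\beta/(d-e)}\abs{\ell_2},
\]
which is the reciprocal of (\ref{eq: limit condition, for e < d - in the statement of the lemma}) and contradicts it unless $\abs{\lambda_1}=\abs{\lambda_2}$. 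So the proof as written does not establish the lemma. Replacing $\abs{\lambda_i}^d$ by $\abs{\lambda_i}^{-d}$ flips the ratio to $(\abs{\lambda_2}/\abs{\lambda_1})^{d\beta/(d-e)}$ and the argument then gives exactly (\ref{eq: limit condition, for e < d - in the statement of the lemma}). Everything else --- the degree bookkeeping, the independence from which pairing $\{g_\pm\}\leftrightarrow\{f_\pm\}$ occurs, and the exponent identity $d/(sm)=d\beta/(d-e)$ --- is correct.
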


\begin{proof}
We begin with a preliminary remark.  Given that \(e\) is the multiplicity of \(X\) as a factor of \(F\), and also as a factor of \(G\), it follows that \(F(X,Y) = X^{e}\cdot\widetilde{F}(X,Y)\) and \(G(X,Y) = X^{e}\cdot\widetilde{G}(X,Y)\),
where \(X\nmid\widetilde{F}(X,Y)\) and \(X\nmid\widetilde{G}(X,Y)\).
By Proposition \ref{prop: cX^d in terms of the multiplicity of X as a factor}, we have \(e\leq d\), with equality if and only if \(F\) and \(G\) are of the form 
\(cX^d\). By hypothesis, \(F\) and \(G\) are not of this form; therefore, we have \(e < d\).

Now, we proceed to the proof of (\ref{eq: limit condition, for e < d - in the statement of the lemma}).
First, we consider the case where \(\lambda > 0\). In this case, we have
\begin{equation*}
\abs{\lambda_1}^d\cdot g_+\circ\phi_1 = f_+
\quad\text{and}\quad
\abs{\lambda_2}^d\cdot g_-\circ\phi_2 = f_-.
\end{equation*}
From the identity
\(\abs{\lambda_1}^d\cdot g_+\circ\phi_1 = f_+\),
it follows naturally that 
\begin{equation*}
\abs{\lambda_1}^d\cdot \widetilde{G}\left(t^{-\frac{1}{\beta}},\frac{\phi_1(t)}{t}\right) = \widetilde{F}\left(t^{-\frac{1}{\beta}},1\right),
\text{ for \(t > 0\)}.
\end{equation*}
In this derivation, we use the fact that \(\widetilde{F}\) and \(\widetilde{G}\) are \(\beta\)-quasihomogeneous of the same degree (by the second part of Proposition \ref{prop: invariance of the multiplicity of the factor X}, both \(\widetilde{F}\) and \(\widetilde{G}\) are \(\beta\)-quasihomogeneous of degree \(d - e\)).
Letting \(t\to+\infty\), we obtain:
\begin{equation*}
\abs{\lambda_1}^d\cdot
\widetilde{G}\left(0,\lim_{\abs{t}\to+\infty} \frac{\phi_1(t)}{t}\right) = 
\widetilde{F}(0,1).
\end{equation*}
Since, \(\widetilde{G}\) is \(\beta\)-quasihomogeneous of degree \(d - e\), it follows that
\begin{equation}
\label{eq: (lambda_1)^d G tilde (...) = F tilde (0,1)}
\widetilde{G}\left(0, 
   \abs{\lambda_1}^{\frac{d\beta}{d-e}} \cdot 
      \lim_{\abs{t}\to+\infty} \frac{\phi_1(t)}{t}\right) 
= \widetilde{F}(0,1).
\end{equation}
Similarly, from the identity \(\abs{\lambda_2}^d\cdot g_-\circ\phi_2 = f_-\), we obtain:
\begin{equation}
\label{eq: (lambda_2)^d G tilde (...) = F tilde (0,1)}
\widetilde{G}\left(0, 
   \abs{\lambda_2}^{\frac{d\beta}{d-e}} \cdot 
      \lim_{\abs{t}\to+\infty} \frac{\phi_2(t)}{t}\right) 
= \widetilde{F}(0,1).
\end{equation}
From (\ref{eq: (lambda_1)^d G tilde (...) = F tilde (0,1)}) and 
(\ref{eq: (lambda_2)^d G tilde (...) = F tilde (0,1)}), we see that
\begin{equation}
\label{eq: G tilde(0,lambda_i^{d beta / d - e} phi_i(t)/t}
\widetilde{G}\left(0, 
   \abs{\lambda_1}^{\frac{d\beta}{d-e}} \cdot 
      \lim_{\abs{t}\to+\infty} \frac{\phi_1(t)}{t}\right) 
=
\widetilde{G}\left(0, 
   \abs{\lambda_2}^{\frac{d\beta}{d-e}} \cdot 
      \lim_{\abs{t}\to+\infty} \frac{\phi_2(t)}{t}\right). 
\end{equation}
Since \(G\) is \(\beta\)-quasihomogeneous of degree \(d\), we have
\(G(X,Y) = \sum_{k = 0}^n b_k X^{d - rk}Y^{sk}\), where \(b_n\neq 0\) and \(0\leq n\leq \lfloor d/r\rfloor\).
Then it follows that \(\widetilde{G}(X,Y) = \sum_{k = 0}^nb_kX^{r(n-k)}Y^{sk}\). Using this identity, we can rewrite equation (\ref{eq: G tilde(0,lambda_i^{d beta / d - e} phi_i(t)/t}) in the form:
\begin{equation*}
b_n\cdot\left(
   \abs{\lambda_1}^{\frac{d\beta}{d - e}}\cdot
      \lim_{\abs{t}\to+\infty} \frac{\phi_1(t)}{t}
\right)^{sn}
=
b_n\cdot\left(
   \abs{\lambda_2}^{\frac{d\beta}{d - e}}\cdot
      \lim_{\abs{t}\to+\infty} \frac{\phi_2(t)}{t}
\right)^{sn}.
\end{equation*}
And from this, we obtain the identity:
\begin{equation*}
\abs{\lambda_1}^{\frac{d\beta}{d - e}}\cdot
      \lim_{\abs{t}\to+\infty} \abs{\frac{\phi_1(t)}{t}}
=
\abs{\lambda_2}^{\frac{d\beta}{d - e}}\cdot
      \lim_{\abs{t}\to+\infty} \abs{\frac{\phi_2(t)}{t}}.
\end{equation*}

In the case where \(\lambda < 0\), we have
\begin{equation*}
\abs{\lambda_1}^d\cdot g_-\circ\phi_1 = f_+
\quad\text{and}\quad
\abs{\lambda_2}^d\cdot g_+\circ\phi_2 = f_-.
\end{equation*}
Proceeding along the same lines as we did in the case where \(\lambda >0\), we can show that 
\begin{equation*}
\widetilde{G}\left(0, 
   \abs{\lambda_1}^{\frac{d\beta}{d-e}} \cdot 
      \lim_{\abs{t}\to+\infty} \frac{\phi_1(t)}{t}\right) 
= (-1)^e\cdot\widetilde{F}(0,1)
= \widetilde{G}\left(0, 
   \abs{\lambda_2}^{\frac{d\beta}{d-e}} \cdot 
      \lim_{\abs{t}\to+\infty} \frac{\phi_2(t)}{t}\right).
\end{equation*}
So equation (\ref{eq: G tilde(0,lambda_i^{d beta / d - e} phi_i(t)/t}) also holds in the case where \(\lambda < 0\), and from this point we can repeat the argument used in the case where \(\lambda > 0\) to complete the proof.
\end{proof}

\begin{proposition}
\label{prop: characterization of beta-transitions}
Let \(F,G\in\R[X,Y]\) be \(\beta\)-quasihomogeneous polynomials of degree \(d\) that are not of the form \(c X^d\), and let \(f_+,f_-\) be the height functions of\/ \(F\) and \(g_+,g_-\) be the height functions of\/ \(G\). Suppose that \((g_+,g_-)\circ(\lambda,\phi) = (f_+,f_-)\) for some zygothety \((\lambda,\phi)\). Then \((\lambda,\phi)\) is \(\beta\)-regular if and only if the following conditions hold:
\begin{enumerate}[label=\roman*.]
\item \(\phi_1\) and \(\phi_2\) are {\it coherent}.\footnote{We say that two monotone functions \(\phi_1,\phi_2\from\R\to\R\) are {\it coherent} if they are either both increasing or both decreasing.}
\item Neither of the polynomials \(F,G\) has \(X\) as a factor, or \(\lambda_1 = \lambda_2\).
\end{enumerate}
\end{proposition}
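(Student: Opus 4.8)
The plan is to strip the definition of $\beta$-regularity down to its second condition and then match that condition to (i) and (ii) by means of Lemma~\ref{lemma: limit condition, for e < d}, distinguishing the cases $e=0$ and $e>0$, where $e$ denotes the multiplicity of $X$ as a factor of $F$ and $G$. Note at the outset that the relation $(g_+,g_-)\circ(\lambda,\phi)=(f_+,f_-)$ already forces the height functions to be arranged into pairs of Lipschitz equivalent functions, so the hypotheses of both Proposition~\ref{prop: invariance of the multiplicity of the factor X} (whence $e$ is well-defined and common to $F$ and $G$) and Lemma~\ref{lemma: limit condition, for e < d} are available; moreover $e<d$, since $F$ and $G$ are not of the form $cX^d$.

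First I would observe that condition (i) in the definition of a $\beta$-regular zygothety is automatic under these hypotheses. Indeed, because $F,G$ are not of the form $cX^d$, the height functions $f_\pm,g_\pm$ are nonconstant polynomial functions, and reading off $(g_+,g_-)\circ(\lambda,\phi)=(f_+,f_-)$ shows that each $\phi_i$ is bi-Lipschitz and satisfies $g\circ\phi_i=c_i f$ for suitable nonconstant polynomial functions $f,g$ and constants $c_i>0$; by Lemma~\ref{lemma: bi-Lipschitz invariance of degree} these polynomials have the same degree, so Lemma~\ref{lemma: analytic extension of phi(t)/t} applies and $L_i\coloneqq\lim_{\abs{t}\to+\infty}\phi_i(t)/t$ is a well-defined nonzero real number, positive if $\phi_i$ is increasing and negative if $\phi_i$ is decreasing. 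Hence $\phi_1,\phi_2$ are coherent precisely when $L_1$ and $L_2$ have the same sign, and $(\lambda,\phi)$ is $\beta$-regular if and only if $\abs{\lambda_1}^\beta L_1=\abs{\lambda_2}^\beta L_2$; since $\abs{\lambda_i}^\beta>0$, the latter is equivalent to the conjunction of coherence of $\phi_1,\phi_2$ with the magnitude identity $\abs{\lambda_1}^\beta\abs{L_1}=\abs{\lambda_2}^\beta\abs{L_2}$.

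Next I would feed in Lemma~\ref{lemma: limit condition, for e < d}, which supplies $\abs{\lambda_1}^{d\beta/(d-e)}\abs{L_1}=\abs{\lambda_2}^{d\beta/(d-e)}\abs{L_2}$. If $e=0$, the exponent $d\beta/(d-e)$ equals $\beta$, so this is exactly the magnitude identity above, and therefore $(\lambda,\phi)$ is $\beta$-regular if and only if $\phi_1,\phi_2$ are coherent, which is precisely ``(i) and (ii)'' in the subcase where $X$ divides neither $F$ nor $G$. If $e>0$, then $d\beta/(d-e)\neq\beta$; setting $\rho\coloneqq\abs{\lambda_1}/\abs{\lambda_2}>0$, the lemma reads $\rho^{d\beta/(d-e)}=\abs{L_2}/\abs{L_1}$ while the magnitude identity reads $\rho^\beta=\abs{L_2}/\abs{L_1}$, and since the two exponents differ and $\rho>0$, these two identities hold simultaneously if and only if $\rho=1$, i.e. $\abs{\lambda_1}=\abs{\lambda_2}$, which, because $(\lambda_1,\lambda_2)\in H$ (so $\lambda_1\lambda_2>0$), is equivalent to $\lambda_1=\lambda_2$. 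Thus in this subcase $(\lambda,\phi)$ is $\beta$-regular if and only if $\phi_1,\phi_2$ are coherent and $\lambda_1=\lambda_2$, which is again ``(i) and (ii)''. Combining the two subcases proves the proposition.

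The only delicate point is the last step: extracting from the single scalar identity of Lemma~\ref{lemma: limit condition, for e < d} the sharp dichotomy $e=0$ versus $\lambda_1=\lambda_2$, and recalling that on $H$ the equality $\abs{\lambda_1}=\abs{\lambda_2}$ automatically upgrades to $\lambda_1=\lambda_2$. Everything else is a straightforward bookkeeping of the lemmas already established.
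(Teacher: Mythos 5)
Your proof is correct and follows essentially the same route as the paper: both establish that the existence and nonvanishing of the limits $L_i = \lim_{\abs{t}\to\infty}\phi_i(t)/t$ comes for free from Lemma~\ref{lemma: analytic extension of phi(t)/t}, reduce $\beta$-regularity to coherence plus the magnitude identity $\abs{\lambda_1}^\beta\abs{L_1} = \abs{\lambda_2}^\beta\abs{L_2}$, and then play this off against the unconditional identity from Lemma~\ref{lemma: limit condition, for e < d} to extract the dichotomy $e=0$ versus $\abs{\lambda_1}=\abs{\lambda_2}$ (hence $\lambda_1=\lambda_2$ since $\lambda_1\lambda_2>0$). Your presentation as a single equivalence chain is somewhat tighter than the paper's two-directional argument, but the underlying mathematics is identical.
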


\begin{proof}
First, suppose that \((\lambda,\phi)\) is \(\beta\)-regular. Then the limits\/ 
\(\lim_{\abs{t}\to+\infty} \phi_1(t)/t\) 
\,and 
\,\(\lim_{\abs{t}\to+\infty} \phi_2(t)/t\)
exist and are nonzero, and we have 
\begin{equation}
\label{eq: limit condition for beta-transitions}
\abs{\lambda_1}^\beta\cdot\lim_{\abs{t}\to+\infty}\frac{\phi_1(t)}{t}
=
\abs{\lambda_2}^\beta\cdot\lim_{\abs{t}\to+\infty}\frac{\phi_2(t)}{t}.
\end{equation}
Since \(\abs{\lambda_1} > 0\) and \(\abs{\lambda_2} > 0\), it follows that 
\(\lim_{\abs{t}\to+\infty} \phi_1(t)/t\) and \(\lim_{\abs{t}\to+\infty} \phi_2(t)/t\) have the same sign. And since \(\phi_1\) and \(\phi_2\) are monotone, this implies that they are coherent. Hence, condition (i) is satisfied.

Now, since \((g_+,g_-)\circ(\lambda,\phi) = (f_+,f_-)\), where \((\lambda,\phi)\) is a zygothety, the multiplicity of \(X\) as a factor of \(F\) is equal to the multiplicity of \(X\) as a factor of \(G\) (see the first part of Proposition \ref{prop: invariance of the multiplicity of the factor X}). Let us denote by \(e\) the multiplicity of \(X\) both as a factor of \(F\) and as a factor of \(G\). 
Then, by Lemma \ref{lemma: limit condition, for e < d}, we have:
\begin{equation}
\label{eq: limit condition, for e < d}
\abs{\lambda_1}^{\frac{d\beta}{d - e}}
   \lim_{\abs{t}\to+\infty} \abs{\frac{\phi_1(t)}{t}} = 
\abs{\lambda_2}^{\frac{d\beta}{d - e}}
   \lim_{\abs{t}\to+\infty} \abs{\frac{\phi_2(t)}{t}}.
\end{equation}
Since the limits \(\lim_{\abs{t}\to+\infty} \phi_1(t)/t\) and 
\(\lim_{\abs{t}\to+\infty} \phi_2(t)/t\) have the same sign, it follows that
\begin{equation}
\label{eq: limit condition, for e < d - without absolute value}
\abs{\lambda_1}^{\frac{d\beta}{d - e}}
   \lim_{\abs{t}\to+\infty} \frac{\phi_1(t)}{t} = 
\abs{\lambda_2}^{\frac{d\beta}{d - e}}
   \lim_{\abs{t}\to+\infty} \frac{\phi_2(t)}{t}.
\end{equation}
Since the limits \(\lim_{\abs{t}\to+\infty} \phi_1(t)/t\) and 
\(\lim_{\abs{t}\to+\infty} \phi_2(t)/t\) are both nonzero, it follows from equations 
(\ref{eq: limit condition for beta-transitions}) and (\ref{eq: limit condition, for e < d - without absolute value}) that
\begin{equation}
\label{eq: ratio of powers of abs(lambda)}
\frac{\abs{\lambda_1}^{\frac{d\beta}{d-e}}}{\abs{\lambda_1}^\beta} = 
\frac{\abs{\lambda_2}^{\frac{d\beta}{d-e}}}{\abs{\lambda_2}^\beta}.
\end{equation}
Equivalently,
\begin{equation}
\label{eq: simplified equation powers of lambda}
\abs{\lambda_1}^\frac{e\beta}{d-e} = \abs{\lambda_2}^\frac{e\beta}{d-e}.
\end{equation}
Furthermore, this equality holds if and only if \(e = 0\) or 
\(\abs{\lambda_1} = \abs{\lambda_2}\); and this is equivalent to condition (ii) because \(\lambda_1\) and \(\lambda_2\) have the same sign. Since (\ref{eq: simplified equation powers of lambda}) actually holds, condition (ii) is satisfied.

Now, in order to prove the converse, suppose that conditions (i) and (ii) hold. By hypothesis, neither \(F\) nor \(G\) is of the form \(cX^d\) and  \((g_+,g_-)\circ(\lambda,\phi) = (f_+,f_-)\), where \((\lambda,\phi)\) is a zygothety. Hence, by Lemma \ref{lemma: analytic extension of phi(t)/t}, the limits\/ \(\lim_{\abs{t}\to+\infty} \phi_1(t)/t\) \,and\,\(\lim_{\abs{t}\to+\infty} \phi_2(t)/t\) exist and are nonzero. Also, by Lemma \ref{lemma: limit condition, for e < d}, it follows from our hypotheses that (\ref{eq: limit condition, for e < d}) still holds for this part of the argument. This, along with condition (i), implies (\ref{eq: limit condition, for e < d - without absolute value}). On the other hand, as we have already proved, condition (ii) is equivalent to (\ref{eq: ratio of powers of abs(lambda)}). Since we are assuming that condition (ii) is satisfied, (\ref{eq: ratio of powers of abs(lambda)}) holds. From (\ref{eq: limit condition, for e < d - without absolute value}) and (\ref{eq: ratio of powers of abs(lambda)}), we obtain (\ref{eq: limit condition for beta-transitions}). Therefore, \((\lambda,\phi)\) is \(\beta\)-regular.
\end{proof}

\begin{corollary}
\label{cor: Consequences of the characterization of beta-transitions}
Let \(F,G\in\R[X,Y]\) be \(\beta\)-quasihomogeneous polynomials of degree \(d\) that are not of the form \(c X^d\), and let \(f_+,f_-\) be the height functions of \(F\) and \(g_+,g_-\) be the height functions of \(G\). 
Also, let \(\beta = r/s\), where \(r > s > 0\) and \(\gcd(r,s) = 1\).
Suppose that \((g_+,g_-)\circ(\lambda,\phi) = (f_+,f_-)\) for some zygothety \((\lambda,\phi)\). Then the following holds:
\begin{enumerate}[label = \textnormal{(\alph*)}]
\item If\/ \(r\) is even or \(s\) is odd, then there exists a \(\beta\)-regular zygothety \((\tilde{\lambda},\tilde{\phi})\) such that
\((g_+,g_-)\circ(\tilde{\lambda},\tilde{\phi}) = (f_+,f_-)\).
\item If\/ \(s\) is even, then there exists 
\(\tilde{\phi} = (\tilde{\phi}_1,\tilde{\phi}_2)\),
with \(\tilde{\phi}_1\) and \(\tilde{\phi}_2\) coherent, such that
\((g_+,g_-)\circ(\lambda,\tilde{\phi}) = (f_+,f_-)\).
\end{enumerate}
\end{corollary}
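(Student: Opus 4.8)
The plan is to reduce the statement to two elementary identities linking the two height functions of a \(\beta\)-quasihomogeneous polynomial, identities governed by the parities of \(r\) and \(s\). First I would note that, as \(\gcd(r,s)=1\), the hypothesis ``\(r\) even or \(s\) odd'' of (a) is equivalent to ``\(s\) odd'', so that (a) and (b) together cover all cases. Writing \(F(X,Y)=\sum_{k=0}^m a_kX^{d-rk}Y^{sk}\) one has \(f_+(t)=\sum_k a_kt^{sk}\) and \(f_-(t)=\sum_k(-1)^{d-rk}a_kt^{sk}\), and a termwise comparison gives: if \(s\) is even, then \(f_+,f_-,g_+,g_-\) are all even functions; if \(s\) is odd, then \(f_-(t)=(-1)^d f_+\big((-1)^rt\big)\) and \(g_-(t)=(-1)^d g_+\big((-1)^rt\big)\) for all \(t\in\R\) (the only thing to verify is that \(rk(s-1)\) is even, which holds since \(s-1\) is). These identities are the only non-routine ingredient.

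For part (a), assume \(s\) is odd, put \(\varepsilon:=(-1)^r\in\{1,-1\}\), and let \(m\from\R\to\R\) be the involution \(m(t)=\varepsilon t\). From \((g_+,g_-)\circ(\lambda,\phi)=(f_+,f_-)\) and the definition of the action, its first coordinate reads \(\abs{\lambda_1}^d\,h\circ\phi_1=f_+\), where \(h=g_+\) if \(\lambda>0\) and \(h=g_-\) if \(\lambda<0\). I would then set \((\tilde\lambda,\tilde\phi):=\big((\lambda_1,\lambda_1),(\phi_1,\ m\circ\phi_1\circ m)\big)\). Using this relation, the identity \(g_-(t)=(-1)^dg_+(\varepsilon t)\) (and, in the subcase \(\lambda<0\), the analogous identity for \(f\)), and \(\varepsilon^2=1\), a short computation done separately for \(\lambda>0\) and for \(\lambda<0\) shows \((g_+,g_-)\circ(\tilde\lambda,\tilde\phi)=(f_+,f_-)\). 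Since \(m=\pm\id\), the map \(\tilde\phi_2=m\circ\phi_1\circ m\) is a bi-Lipschitz homeomorphism with the same monotonicity as \(\tilde\phi_1=\phi_1\), so \(\tilde\phi_1,\tilde\phi_2\) are coherent; moreover \(\tilde\lambda_1=\tilde\lambda_2\). Hence Proposition \ref{prop: characterization of beta-transitions} applies and yields that \((\tilde\lambda,\tilde\phi)\) is \(\beta\)-regular, as desired.

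For part (b), \(s\) is even, so \(g_+\) and \(g_-\) are even functions. If \(\phi_1,\phi_2\) are already coherent, take \(\tilde\phi:=\phi\). Otherwise exactly one of \(\phi_1,\phi_2\) is decreasing; replacing that one, say \(\phi_j\), by \(\rho\circ\phi_j\) where \(\rho(t)=-t\), makes both \(\tilde\phi_1\) and \(\tilde\phi_2\) increasing (hence coherent) and leaves both defining relations \(\abs{\lambda_1}^dh_1\circ\phi_1=f_+\) and \(\abs{\lambda_2}^dh_2\circ\phi_2=f_-\) intact — the unaffected one trivially, and the affected one because the relevant height function \(h_j\) of \(G\) is even, so \(h_j\circ(\rho\circ\phi_j)=h_j\circ\phi_j\). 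Thus \((g_+,g_-)\circ(\lambda,\tilde\phi)=(f_+,f_-)\) with the \emph{same} \(\lambda\), which is exactly assertion (b). (Here one cannot in general arrange \(\tilde\lambda_1=\tilde\lambda_2\), so \(\beta\)-regularity may genuinely fail; this is why (b) is stated in the weaker form.)

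The step I expect to be the main obstacle is the derivation of the parity identities and, above all, the realization that when \(s\) is odd they let one transport the single-variable equivalence \(\phi_1\) from the right half-line to the left as \(m\circ\phi_1\circ m\) \emph{without changing the scaling constant \(\abs{\lambda_1}^d\)}; after that, producing \((\tilde\lambda,\tilde\phi)\) and checking its properties is just a matter of unwinding the definitions of the action and of \(\beta\)-regularity and treating the two sign cases.
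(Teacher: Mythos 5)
Your proof is correct and follows essentially the same approach as the paper: derive the parity identities relating $f_-,g_-$ to $f_+,g_+$, use them to replace $\phi_2$ by a suitably reflected copy of $\phi_1$ with $\tilde\lambda_1=\tilde\lambda_2$, and invoke Proposition~\ref{prop: characterization of beta-transitions} for $\beta$-regularity. The only stylistic difference is that you observe ``$r$ even or $s$ odd'' is equivalent (given $\gcd(r,s)=1$) to ``$s$ odd'' and so collapse the paper's two subcases of part (a) into one via the involution $m(t)=(-1)^r t$ and the unified identity $f_-(t)=(-1)^d f_+((-1)^r t)$, whereas the paper treats $r$ even and $r,s$ both odd separately with the same underlying construction.
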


\begin{proof}
Let \(F(X,Y) = \sum_{k=0}^n a_kX^{d-rk}Y^{sk}\) and 
\(G(X,Y) = \sum_{k=0}^n b_k X^{d-rk}Y^{sk}\), with \(a_n,b_n\neq 0\), 
\(n\geq 1\). (In the proof of Proposition \ref{prop: invariance of the multiplicity of the factor X}, we showed that the upper limit of summation \(n\) is the same for \(F\) and \(G\), provided that the height functions of \(F\) and \(G\) can be arranged into pairs of Lipschitz equivalent functions --- which is the case, since
\((g_+,g_-)\circ(\lambda,\phi) = (f_+,f_-)\) for some zygothety
\((\lambda,\phi)\). Also, we have \(n\geq 1\) because neither \(F\) nor \(G\) is of the form \(cX^d\).) Let us proceed to the proof of parts (a) and (b).
\begin{enumerate}[label=(\alph*)]
\item {\bf Case 1.} \(r\) is even

In this case, we have
\(f_-(t) = (-1)^d\cdot f_+(t)\) and \(g_-(t) = (-1)^d\cdot g_+(t)\).
By hypothesis, there exists a zygothety \((\lambda,\phi)\) such that 
\((g_+,g_-)\circ(\lambda,\phi) = (f_+,f_-)\). We claim that 
\((g_+,g_-)\circ(\tilde\lambda,\tilde\phi) = (f_+,f_-)\), where 
\(\tilde\lambda = (\lambda_1, \lambda_1)\) and \(\tilde\phi = (\phi_1,\phi_1)\).
In fact, if \(\lambda > 0\) then \(\abs{\lambda_1}^d\cdot g_+\circ\phi_1 = f_+\) and hence 
\begin{equation*}
\abs{\lambda_1}^d\cdot g_-(\phi_1(t))  
= (-1)^d\cdot \abs{\lambda_1}^d\cdot g_+(\phi_1(t))
= (-1)^d\cdot f_+(t)
= f_-(t),
\end{equation*}
so we also have \(\abs{\lambda_1}^d\cdot g_-\circ\phi_1 = f_-\).
If \(\lambda < 0\) then \(\abs{\lambda_1}^d\cdot g_-\circ\phi_1 = f_+\) and hence
\begin{equation*}
\abs{\lambda_1}^d\cdot g_+(\phi_1(t))  
= (-1)^d\cdot \abs{\lambda_1}^d\cdot g_-(\phi_1(t))
= (-1)^d\cdot f_+(t)
= f_-(t),
\end{equation*}
so we also have \(\abs{\lambda_1}^d\cdot g_+\circ\phi_1 = f_-\).
By Proposition \ref{prop: characterization of beta-transitions}, 
\((\tilde\lambda,\tilde\phi)\) is a \(\beta\)-regular zygothety.\\

\noindent
{\bf Case 2.} \(r\) and \(s\) are both odd

In this case, we have \(f_-(t) = (-1)^d\cdot f_+(-t)\) and \(g_-(t) = (-1)^d\cdot g_+(-t)\). By hypothesis, there exists a zygothety \((\lambda,\phi)\) such that 
\((g_+,g_-)\circ(\lambda,\phi) = (f_+,f_-)\). We claim that 
\((g_+,g_-)\circ(\tilde\lambda,\tilde\phi) = (f_+,f_-)\), where 
\(\tilde\lambda = (\lambda_1, \lambda_1)\) and 
\(\tilde\phi(t) = (\phi_1(t),-\phi_1(-t))\).
In fact, if \(\lambda > 0\) then \(\abs{\lambda_1}^d\cdot g_+\circ\phi_1 = f_+\) and hence 
\begin{equation*}
\abs{\lambda_1}^d\cdot g_-(-\phi_1(-t))  
= (-1)^d\cdot \abs{\lambda_1}^d\cdot g_+(\phi_1(-t))
= (-1)^d\cdot f_+(-t)
= f_-(t),
\end{equation*}
so we also have \(\abs{\lambda_1}^d\cdot g_-(-\phi_1(-t)) = f_-(t)\).
If \(\lambda < 0\) then \(\abs{\lambda_1}^d\cdot g_-\circ\phi_1 = f_+\) and hence
\begin{equation*}
\abs{\lambda_1}^d\cdot g_+(-\phi_1(-t))  
= (-1)^d\cdot \abs{\lambda_1}^d\cdot g_-(\phi_1(-t))
= (-1)^d\cdot f_+(-t)
= f_-(t),
\end{equation*}
so we also have \(\abs{\lambda_1}^d\cdot g_+(-\phi_1(-t)) = f_-(t)\).
By Proposition \ref{prop: characterization of beta-transitions}, 
\((\tilde\lambda,\tilde\phi)\) is a \(\beta\)-regular zygothety.

\item Suppose that \(s\) is even. In this case, we have
\(g_+(-t) = g_+(t)\) and \(g_-(-t) = g_-(t)\).
By hypothesis, there exists a zygothety \((\lambda,\phi)\) such that
\((g_+,g_-)\circ (\lambda,\phi) = (f_+,f_-)\). We claim that 
\((g_+,g_-)\circ (\lambda,\overline{\phi}) = (f_+,f_-)\), where
\(\overline{\phi} = (\phi_1,-\phi_2)\). In fact, if \(\lambda > 0\) then 
\(\abs{\lambda_2}^d\cdot g_-\circ \phi_2 = f_-\) and hence also
\(\abs{\lambda_2}^d\cdot g_-\circ (-\phi_2) = f_-\). If \(\lambda < 0\) then 
\(\abs{\lambda_2}^d\cdot g_+\circ \phi_2 = f_-\) and hence also
\(\abs{\lambda_2}^d\cdot g_+\circ (-\phi_2) = f_-\). 
Finally, notice that \(\phi_1\) is coherent with either \(\phi_2\) or \(-\phi_2\). If \(\phi_1\) and \(\phi_2\) are coherent, we take \(\tilde\phi = \phi\). If \(\phi_1\) and \(-\phi_2\) are coherent, we take \(\tilde\phi = \overline{\phi}\).
\vspace{-20pt}
\end{enumerate}
\end{proof}

\begin{theorem}
\label{thm: Sufficient conditions for R-semialg. Lip. equivalence}
Let \(F,G\in\R[X,Y]\) be \(\beta\)-quasihomogeneous polynomials of degree \(d\) that are not of the form \(cX^d\), and let \(f_+,f_-\) be the height functions of\/ \(F\) and \(g_+,g_-\) be the height functions of\/ \(G\). Also, let \(\beta = r/s\), where \(r > s > 0\) and 
\(\gcd(r,s) = 1\). Suppose that
\((g_+,g_-)\circ(\lambda,\phi) = (f_+,f_-)\)
for some zygothety \((\lambda,\phi) = ((\lambda_1,\lambda_2),(\phi_1,\phi_2))\). If any of the following conditions is satisfied, then\/ \(F\) and\/ \(G\) are
\({\cal R}\)-semialgebraically Lipschitz equivalent:
\begin{enumerate}[label=\textnormal{(\alph*)}]
\item \(r\) is even or \(s\) is odd.
\item \(\lambda_1 = \lambda_2\)
\item Neither \(F\) nor \(G\) has \(X\) as a factor.
\end{enumerate}
\end{theorem}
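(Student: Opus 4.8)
The plan is to reduce, in each of the three cases, to the situation of a $\beta$-regular zygothety and then invoke Theorem~\ref{thm: beta-regular zygothety, R-semialgebraic Lipschitz equivalence}, which already manufactures the required germ of semialgebraic bi-Lipschitz homeomorphism (namely the inverse $\beta$-transform of that zygothety) carrying $F$ to $G$. The hypothesis that $(g_+,g_-)\circ(\lambda,\phi)=(f_+,f_-)$ for some zygothety means, by Corollary~\ref{cor: condition on height functions in terms of group action}, that the height functions of $F$ and $G$ can be arranged into pairs of Lipschitz equivalent functions, so every auxiliary result of this subsection is available; in particular $F$ and $G$ are both not of the form $cX^d$ by assumption, so their height functions are nonconstant.

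First I would dispose of case (a). If $r$ is even or $s$ is odd, then Corollary~\ref{cor: Consequences of the characterization of beta-transitions}(a) directly produces a $\beta$-regular zygothety $(\tilde\lambda,\tilde\phi)$ with $(g_+,g_-)\circ(\tilde\lambda,\tilde\phi)=(f_+,f_-)$; applying Theorem~\ref{thm: beta-regular zygothety, R-semialgebraic Lipschitz equivalence} to $(\tilde\lambda,\tilde\phi)$ then finishes the argument. Next I would note that if (a) fails, then since $\gcd(r,s)=1$ it cannot happen that both $r$ and $s$ are even, so we are forced into the single remaining situation $r$ odd and $s$ even — and this is precisely the hypothesis of Corollary~\ref{cor: Consequences of the characterization of beta-transitions}(b). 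That corollary lets us replace $\phi$ by $\tilde\phi=(\tilde\phi_1,\tilde\phi_2)$ with $\tilde\phi_1$ and $\tilde\phi_2$ coherent, keeping $\lambda$ and the polynomials $F,G$ unchanged and preserving the identity $(g_+,g_-)\circ(\lambda,\tilde\phi)=(f_+,f_-)$. Now I would feed $(\lambda,\tilde\phi)$ into Proposition~\ref{prop: characterization of beta-transitions}: condition~(i) there is exactly the coherence we just arranged, and condition~(ii) holds because under hypothesis (b) we have $\lambda_1=\lambda_2$, while under hypothesis (c) neither $F$ nor $G$ has $X$ as a factor. Hence $(\lambda,\tilde\phi)$ is $\beta$-regular, and Theorem~\ref{thm: beta-regular zygothety, R-semialgebraic Lipschitz equivalence} again yields that $F$ and $G$ are $\mathcal R$-semialgebraically Lipschitz equivalent.

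Since the substantive analytic work (bi-Lipschitz bounds for the inverse $\beta$-transform, Pompeiu's mean value theorem, the limit identity of Lemma~\ref{lemma: limit condition, for e < d}) has already been absorbed into the earlier results, I do not expect a genuine obstacle here. The one point requiring care is the bookkeeping of the case split: checking that hypotheses (a), (b), (c) together partition cleanly into ``case (a)'' and ``the $r$ odd, $s$ even case'', and verifying that the coherence adjustment of Corollary~\ref{cor: Consequences of the characterization of beta-transitions}(b) cannot spoil hypothesis (b) or (c) — which it cannot, since it alters neither the pair $\lambda$ nor the polynomials $F$ and $G$. That is the main thing to get right.
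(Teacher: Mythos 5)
Your proof is correct and follows essentially the same route as the paper's: handle case (a) via Corollary~\ref{cor: Consequences of the characterization of beta-transitions}(a), then for (b) and (c) reduce to the $r$ odd, $s$ even situation, invoke Corollary~\ref{cor: Consequences of the characterization of beta-transitions}(b) to arrange coherence, and verify $\beta$-regularity through Proposition~\ref{prop: characterization of beta-transitions} before applying Theorem~\ref{thm: beta-regular zygothety, R-semialgebraic Lipschitz equivalence}. The only cosmetic difference is that the paper splits on ``$s$ odd versus $s$ even'' inside cases (b)/(c), whereas you split on ``(a) holds versus (a) fails''; the two bookkeepings are logically equivalent.
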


\begin{proof}
If \(r\) is even or \(s\) is odd then, by Corollary \ref{cor: Consequences of the characterization of beta-transitions}, there exists a \(\beta\)-regular zygothety
\((\tilde\lambda,\tilde\phi)\) such that
\((g_+,g_-)\circ(\tilde\lambda,\tilde\phi) = (f_+,f_-)\). 
Hence, by Theorem \ref{thm: beta-regular zygothety, R-semialgebraic Lipschitz equivalence}, \(F\) and \(G\) are \({\cal R}\)-semialgebraically Lipschitz equivalent.

Now, assume that either (b) or (c) holds. If \(s\) is odd, then condition (a) is satisfied, and therefore \(F\) and \(G\) are \({\cal R}\)-semialgebraically Lipschitz equivalent, as we have just proved. If \(s\) is even then, by Corollary \ref{cor: Consequences of the characterization of beta-transitions}, there exists \(\tilde\phi = (\tilde\phi_1,\tilde\phi_2)\), with \(\tilde\phi_1\) and 
\(\tilde\phi_2\) coherent, such that 
\((g_+,g_-)\circ(\lambda,\tilde\phi) = (f_+,f_-)\). 
Since we are assuming that either (b) or (c) holds, Proposition \ref{prop: characterization of beta-transitions} guarantees that 
\((\lambda,\tilde\phi)\) is \(\beta\)-regular. 
Then, by Theorem \ref{thm: beta-regular zygothety, R-semialgebraic Lipschitz equivalence}, it follows that \(F\) and \(G\) are \({\cal R}\)-semialgebraically Lipschitz equivalent.
\end{proof}

\begin{corollary}
\label{cor: Sufficient conditions for R-semialg. Lip. equivalence}
Let\/ \(F,G\in\R[X,Y]\) be \(\beta\)-quasihomogeneous polynomials of degree \(d\) that are not of the form \(cX^d\). Suppose that the height functions \(f_+,f_-\) of\/ \(F\) and the height functions 
\(g_+,g_-\) of\/ \(G\) can be arranged into pairs of Lipschitz equivalent functions.
If one of the height functions\/ \(f_+,f_-,g_+,g_-\) has no critical points, then\/ \(F\) and\/ \(G\) are \({\cal R}\)-semialgebraically Lipschitz equivalent.
\end{corollary}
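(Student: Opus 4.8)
The strategy is to invoke Theorem~\ref{thm: Sufficient conditions for R-semialg. Lip. equivalence}. Since by hypothesis the height functions \(f_+,f_-\) of \(F\) and \(g_+,g_-\) of \(G\) can be arranged into pairs of Lipschitz equivalent functions, Corollary~\ref{cor: condition on height functions in terms of group action} furnishes a zygothety \((\lambda,\phi)\) with \((g_+,g_-)\circ(\lambda,\phi)=(f_+,f_-)\). Since neither \(F\) nor \(G\) is of the form \(cX^d\), Theorem~\ref{thm: Sufficient conditions for R-semialg. Lip. equivalence} will apply as soon as one of its conditions (a)--(c) is verified; the plan is to establish condition (a), i.e., writing \(\beta=r/s\) with \(r>s>0\) and \(\gcd(r,s)=1\), that \(r\) is even or \(s\) is odd.

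The crux is to show that the hypothesis that one height function has no critical points forces \(s\) to be odd. Write \(F(X,Y)=\sum_{k=0}^{m}c_kX^{d-rk}Y^{sk}\) with \(c_m\neq 0\); since \(F\) is not of the form \(cX^d\) we have \(m\geq 1\), so \(f_+(t)=F(1,t)=\sum_{k=0}^{m}c_kt^{sk}\) is a nonconstant polynomial in \(t^s\), and the same holds for \(f_-\) and, by the identical reasoning applied to \(G\), for \(g_+\) and \(g_-\). If \(s\) were even, then \(s\geq 2\), so every monomial of \(f_+'\) would carry a positive power of \(t\); hence \(f_+'(0)=0\), making \(0\) a critical point of \(f_+\), and the same computation would make \(0\) a critical point of \(f_-\), \(g_+\), and \(g_-\) as well. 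This contradicts the assumption that one of the four height functions has no critical points. Hence \(s\) is odd.

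With \(s\) odd, condition (a) of Theorem~\ref{thm: Sufficient conditions for R-semialg. Lip. equivalence} is satisfied, and that theorem yields that \(F\) and \(G\) are \({\cal R}\)-semialgebraically Lipschitz equivalent, which completes the proof. I expect no genuine obstacle here: once the zygothety is in hand via Corollary~\ref{cor: condition on height functions in terms of group action}, the whole argument rests on the elementary parity observation that a height function of a \(\beta\)-quasihomogeneous polynomial not of the form \(cX^d\) is a nonconstant polynomial in \(t^s\), and therefore unavoidably has \(0\) as a critical point when \(s\) is even — so the ``no critical points'' hypothesis is simply incompatible with \(s\) being even.
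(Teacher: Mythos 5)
Your proof is correct, but it takes a genuinely different route from the paper. The paper's proof applies condition (b) of Theorem \ref{thm: Sufficient conditions for R-semialg. Lip. equivalence}: after obtaining a zygothety \((\lambda,\phi)\), it revisits the construction in Theorem \ref{thm: no critical points} to show that the component of \(\phi\) relating the critical-point-free pair can be \emph{replaced}, at the cost of a different scaling constant, so as to force \(\tilde\lambda_1 = \tilde\lambda_2\). Your argument instead verifies condition (a) by a clean parity observation: each height function of a \(\beta\)-quasihomogeneous polynomial not of the form \(cX^d\) is a nonconstant polynomial in \(t^s\), and when \(s\) is even (hence \(s \geq 2\)) its derivative has no constant term, so \(0\) is automatically a critical point of all four height functions — contradicting the hypothesis. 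Thus the hypothesis forces \(s\) odd, and condition (a) applies directly. Your route is more elementary (it needs no return to the internals of Theorem \ref{thm: no critical points}'s proof, nor any case split on the sign of \(\lambda\)) and it surfaces a nice structural fact: under the stated hypotheses the denominator \(s\) of \(\beta\) must be odd, so this case of the corollary is, in effect, already covered by the parity condition alone.
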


\begin{proof}
By hypothesis, the height functions of \(F\) and \(G\) can be arranged into pairs of Lipschitz equivalent functions. Hence, by Corollary \ref{cor: condition on height functions in terms of group action}, there exists a zygothety
\((\lambda,\phi) = ((\lambda_1,\lambda_2),(\phi_1,\phi_2))\) such that 
\((g_+,g_-)\circ(\lambda,\phi) = (f_+,f_-)\). From this, and assuming that one of the height functions\/ \(f_+,f_-,g_+,g_-\) has no critical points, we prove that there exists a zygothety \((\tilde\lambda,\tilde\phi) = ((\tilde\lambda_1,\tilde\lambda_2),(\tilde\phi_1,\tilde\phi_2))\),
with \(\tilde\lambda_1 = \tilde\lambda_2\), such that
\((g_+,g_-)\circ(\tilde\lambda,\tilde\phi) = (f_+,f_-)\).
Then it follows, by Theorem \ref{thm: Sufficient conditions for R-semialg. Lip. equivalence}, that \(F\) and \(G\) are \({\cal R}\)-semialgebraically Lipschitz equivalent.

We consider only the case where \(\lambda > 0\); the case where \(\lambda < 0\) is analogous. In this case, we have
\(\abs{\lambda_1}^d g_+\circ\phi_1 = f_+\)
and
\(\abs{\lambda_2}^d g_-\circ\phi_2 = f_-\).
Since \(f_+\cong g_+\) and \(f_-\cong g_-\), we see that \(f_+\) and \(g_+\) have the same number of critical points, as do \(f_-\) and \(g_-\). Thus, we only need to consider the following possibilities: (A) Both \(f_+\) and \(g_+\) have no critical points. (B) Both \(f_-\) and \(g_-\) have no critical points.

Suppose that both \(f_+\) and \(g_+\) have no critical points.
The proof of Theorem \ref{thm: no critical points} shows that
if \(f,g\from\R\to\R\) are polynomial functions of degree
\(d\geq 1\) with no critical points, then there exists a bi-Lipschitz homeomorphism \(\phi\from\R\to\R\) such that \(g\circ\phi = f\). By applying this to \(f_+\) and \(\abs{\lambda_2}^d g_+\), we obtain a bi-Lipschitz homeomorphism \(\tilde{\phi}_1\from\R\to\R\) such that
\(\abs{\lambda_2}^d g_+ \circ \tilde{\phi}_1 = f_+\). Thus, we have
\((g_+,g_-)\circ(\tilde\lambda,\tilde\phi) = (f_+,f_-)\), where 
\(\tilde\lambda = (\lambda_2,\lambda_2)\) and 
\(\tilde\phi = (\tilde{\phi}_1,\phi_2)\).
Similarly, if both \(f_-\) and \(g_-\) have no critical points, we can obtain a bi-Lipschitz homeomorphism \(\tilde{\phi}_2\from\R\to\R\) such that
\(\abs{\lambda_1}^d g_- \circ \tilde{\phi}_2 = f_-\), and then we have 
\((g_+,g_-)\circ(\tilde\lambda,\tilde\phi) = (f_+,f_-)\), where 
\(\tilde\lambda = (\lambda_1,\lambda_1)\) and 
\(\tilde\phi = (\phi_1,\tilde{\phi}_2)\).
\end{proof}

\begin{lemma}
\label{lemma: r odd, s even}
Let \(F,G\in\R[X,Y]\) be \(\beta\)-quasihomogeneous polynomials of degree \(d\) that are not of the form \(cX^d\), and let \(f_+,f_-\) be the height functions of \(F\) and \(g_+,g_-\) be the height functions of \(G\). Also, let \(\beta = r/s\), where \(r > s > 0\) and 
\(\gcd(r,s) = 1\). Suppose that
\((g_+,g_-)\circ(\lambda,\phi) = (f_+,f_-)\)
for some zygothety \((\lambda,\phi) = ((\lambda_1,\lambda_2),(\phi_1,\phi_2))\). If \(r\) is odd and \(s\) is even, then:
\begin{enumerate}[label=\roman*.]
\item Either both of the polynomials \(F,G\) have \(Y\) as a factor, or neither has \(Y\) as a factor.
\item If neither \(F\) nor \(G\) has \(Y\) as a factor then \(\lambda_1 = \lambda_2\).
\item If\/ \(Y\) is a factor of both \(F\) and \(G\), and one of the height functions
\(f_+,f_-,g_+,g_-\) has only one critical point, then there exists a zygothety \((\tilde\lambda,\tilde\phi)=
((\tilde\lambda_1,\tilde\lambda_2),(\tilde\phi_1,\tilde\phi_2))\),
with \(\tilde\lambda_1=\tilde\lambda_2\), such that
\((g_+,g_-)\circ(\tilde\lambda,\tilde\phi)=(f_+,f_-)\).
\end{enumerate}
\end{lemma}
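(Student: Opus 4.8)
The plan is to exploit a single structural fact. Write $\beta = r/s$ with $r$ odd and $s$ even, and expand $F(X,Y)=\sum_{k=0}^n a_kX^{d-rk}Y^{sk}$, $G(X,Y)=\sum_{k=0}^n b_kX^{d-rk}Y^{sk}$ with $a_n,b_n\neq 0$ and $n\geq 1$ (the common top index being forced by Proposition~\ref{prop: invariance of the multiplicity of the factor X}). Since $s$ is even, all four height functions $f_+,f_-,g_+,g_-$ are \emph{even} polynomials of the same even degree $sn$; hence each has an \emph{odd} number of critical points, arranged symmetrically about the origin, and — because $s\geq 2$ makes the derivative vanish at $0$ — the origin is always among them, so it is the unique ``central'' critical point. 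I would also record at the outset that $Y\mid F\iff a_0=0\iff f_+(0)=0\iff f_-(0)=0$ (using $f_-(0)=(-1)^da_0$), and likewise for $G$, together with $g_\pm(0)=b_0,(-1)^db_0$. The key auxiliary claim is then: \emph{if $h_1\cong h_2$ are even nonconstant polynomials of even degree and $h_2\circ\psi=ch_1$ with $\psi$ bi-Lipschitz and $c>0$, then $\psi(0)=0$, and hence $h_2(0)=ch_1(0)$.} This holds because $\psi$ is bi-analytic and preserves multiplicities (Lemma~\ref{lemma: bi-Lipschitz iff bi-analytic}), so it restricts to a monotone bijection between the critical sets of $h_1$ and $h_2$, which have the same odd cardinality and are symmetric about $0$; whether $\psi$ is increasing or decreasing, its restriction carries the central critical point $0$ of $h_1$ to the central critical point $0$ of $h_2$. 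Part (i) follows immediately: $f_+$ is Lipschitz equivalent to $g_+$ or to $g_-$, and the claim applied to that pair gives $f_+(0)=0\iff b_0=0\iff Y\mid G$, while $f_+(0)=0\iff Y\mid F$.

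For part (ii) I would assume $a_0\neq 0\neq b_0$ and show $\abs{\lambda_1}=\abs{\lambda_2}$, which suffices since $\lambda_1\lambda_2>0$. Handling the case $\lambda>0$ (the case $\lambda<0$ is identical with $g_+$ and $g_-$ interchanged), the hypothesis $(g_+,g_-)\circ(\lambda,\phi)=(f_+,f_-)$ reads $\abs{\lambda_1}^dg_+\circ\phi_1=f_+$ and $\abs{\lambda_2}^dg_-\circ\phi_2=f_-$, so $f_+\cong g_+$ and $f_-\cong g_-$. The claim forces $\phi_1(0)=\phi_2(0)=0$, and evaluating the two identities at $t=0$ yields $\abs{\lambda_1}^db_0=a_0$ and $\abs{\lambda_2}^d(-1)^db_0=(-1)^da_0$; since $b_0\neq 0$, these give $\abs{\lambda_1}=\abs{\lambda_2}$, hence $\lambda_1=\lambda_2$.

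Part (iii) I would model on the proof of Corollary~\ref{cor: Sufficient conditions for R-semialg. Lip. equivalence}, with ``exactly one critical point, of value $0$'' playing the role that ``no critical points'' plays there. Now $a_0=b_0=0$, so all four height functions vanish at $0$; take again $\lambda>0$ (the other case analogous), so $f_+\cong g_+$ and $f_-\cong g_-$. The hypothesis that one of $f_+,f_-,g_+,g_-$ has only one critical point (necessarily $0$, the functions being even with $0$ critical) forces, by Lipschitz invariance of the number of critical points, both members of one of the paired families $\{f_+,g_+\}$ or $\{f_-,g_-\}$ to have $0$ as their sole critical point. Say it is $\{f_+,g_+\}$ (the case $\{f_-,g_-\}$ is symmetric, rebuilding the second component and taking $\tilde\lambda:=(\lambda_1,\lambda_1)$). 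Then $f_+$ and $g_+$ have the same even degree $sn$, a single critical point $0$ of the same multiplicity (the claim gives $\phi_1(0)=0$, so multiplicities agree there), and common critical value $0$; and since $f_+\cong g_+$, part (ii) of Theorem~\ref{thm: only one critical point} tells us $0$ is an absolute minimum of both, or an absolute maximum of both. Because the critical value is $0$, the positive constant in the piecewise construction in the proof of Theorem~\ref{thm: only one critical point} may be chosen freely; taking it to be $\abs{\lambda_2}^{-d}$ produces a bi-Lipschitz homeomorphism $\tilde\phi_1$ with $\abs{\lambda_2}^dg_+\circ\tilde\phi_1=f_+$. Then $\tilde\lambda:=(\lambda_2,\lambda_2)$ and $\tilde\phi:=(\tilde\phi_1,\phi_2)$ satisfy $(g_+,g_-)\circ(\tilde\lambda,\tilde\phi)=(f_+,f_-)$ with $\tilde\lambda_1=\tilde\lambda_2$.

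The one genuinely substantive step is the auxiliary claim — that a bi-Lipschitz map relating two even height functions must fix the origin — since it is exactly what licenses evaluating at $t=0$ in parts (ii) and (iii). The remaining difficulties are routine: tracking the signs $(-1)^d$, observing that the sign of $\lambda$ merely permutes $g_+$ and $g_-$, and noting that the single-variable construction of Section~\ref{subsection: Lipschitz equivalence of polynomial functions of a single variable} admits an arbitrary positive scaling constant precisely when the critical value vanishes.
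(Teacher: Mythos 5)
Your proposal is correct and follows essentially the same route as the paper: both arguments hinge on the observation that with $s$ even the height functions are even polynomials, so each has $0$ as a critical point with the remaining critical points symmetric about it, and monotonicity of the bi-Lipschitz factor forces it to fix $0$; evaluating the defining identities at $t=0$ then yields $\abs{\lambda_1}^d b_0 = a_0 = \abs{\lambda_2}^d b_0$, from which (i) and (ii) follow, and (iii) is obtained by the same replacement trick (exploiting the free scaling constant in the proof of Theorem~\ref{thm: only one critical point} when the critical value vanishes). The only cosmetic difference is that you package the $\phi_i(0)=0$ step as a standalone claim about even polynomials, whereas the paper proves the displayed identity $\abs{\lambda_i}^d b_0 = a_0$ directly inline.
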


\begin{proof}
Let \(F(X,Y)=\sum_{k=0}^n a_k X^{d-rk}Y^{sk}\) and 
\(G(X,Y) = \sum_{k=0}^n b_k X^{d-rk}Y^{sk}\), 
with \(a_n,b_n\neq 0\), \(n\geq 1\). (In the proof of 
Proposition \ref{prop: invariance of the multiplicity of the factor X},
we showed that the upper limit of the summation \(n\) is the same for \(F\) and \(G\), provided that the height functions of \(F\) and \(G\) can be arranged into pairs of Lipschitz equivalent functions. Also, we have \(n\geq 1\) because neither \(F\) nor \(G\) is of the form \(cX^d\).)

Suppose that \(r\) is odd and \(s\) is even. Then we have:
\(f_+(t) = \sum_{k=0}^n a_kt^{sk},\,
f_-(t) = (-1)^d\sum_{k=0}^n(-1)^ka_kt^{sk}\),\,
\(g_+(t) = \sum_{k=0}^n b_kt^{sk}\), and 
\(g_-(t) = (-1)^d\sum_{k=0}^n(-1)^kb_kt^{sk}\).

We claim that
\begin{equation}
\label{eq: lambda_i b_0 = a_0}
\abs{\lambda_1}^d b_0 = a_0 
\quad\text{and}\quad 
\abs{\lambda_2}^d b_0 = a_0.
\end{equation}
In order to prove this, we start by noting that \(f_+,f_-,g_+,g_-\) are all even functions, because \(s\) is even. Thus, for each of these height functions, \(0\) is a critical point and the number of negative critical points is equal to the number of positive critical points (more precisely, the map \(t\mapsto -t\) establishes a 1-1 correspondence between the positive critical points and the negative critical points). Also, we note that each of the functions \(f_+,f_-,g_+,g_-\) has only a finite number of critical points, since they are all nonconstant polynomial functions (each of them has degree \(sn\) because \(a_n,b_n\neq 0\)).
Now, suppose that \(\lambda > 0\). Then
\(\abs{\lambda_1}^d g_+\circ\phi_1 = f_+\)
and
\(\abs{\lambda_2}^d g_-\circ\phi_2 = f_-\).
Let \(-t_p < \dots < -t_1 < 0 < t_1 < \dots < t_p\) be the critical points of \(f_+\), and let \(-s_p < \dots < -s_1 < 0 < s_1 < \dots < s_p\) be the critical points of 
\(g_+\) (\(f_+\) and \(g_+\) have the same number of critical points because they are Lipschitz equivalent). Since \(\phi_1\) is monotone, injective, and takes critical points of \(f_+\) to critical points of \(g_+\), it follows that \(\phi_1(0) = 0\). Consequently, since 
\(\abs{\lambda_1}^d g_+\circ\phi_1 = f_+\), we have 
\(\abs{\lambda_1}^d g_+(0) = f_+(0)\). Equivalently, since 
\(f_+(0) = a_0\) and \(g_+(0) = b_0\), we have 
\(\abs{\lambda_1}^d b_0 = a_0\). Similarly, we can show that \(\phi_2(0) = 0\), and then we can use this along with the equation \(\abs{\lambda_2}^d g_-\circ\phi_2 = f_-\) to conclude that \(\abs{\lambda_2}^d b_0 = a_0\).
This shows that (\ref{eq: lambda_i b_0 = a_0}) holds for \(\lambda > 0\). The proof for \(\lambda < 0\) is analogous.

Now, we proceed to the proof of the proposition itself. 
From (\ref{eq: lambda_i b_0 = a_0}), we see that either \(a_0 = b_0 = 0\),
or \(a_0\neq 0\) and \(b_0\neq 0\). Clearly, \(Y\) is a factor of \(F\) if and only if \(a_0 = 0\), and \(Y\) is a factor of \(G\) if and only if \(b_0 = 0\). Hence, either both \(F\) and \(G\) have \(Y\) as a factor, or neither has \(Y\) as a factor. This proves the first part of the proposition. 

For the second part, suppose that neither \(F\) nor \(G\) has \(Y\) as a factor, so that \(a_0\neq 0\) and \(b_0\neq 0 \). Then, from (\ref{eq: lambda_i b_0 = a_0}), it follows that \(\abs{\lambda_1} = \abs{\lambda_2}\). Since \(\lambda_1\) and \(\lambda_2\) have the same sign, we actually have \(\lambda_1 = \lambda_2\). 

Now, we prove the third part. Suppose that \(Y\) is a factor of both \(F\) and 
\(G\), and that one of the height functions \(f_+,f_-,g_+,g_-\) has only one critical point. Let us consider the case where \(\lambda > 0\). In this case, we have:
\begin{equation*}
\abs{\lambda_1}^d g_+\circ\phi_1 = f_+
\quad\text{and}\quad
\abs{\lambda_2}^d g_-\circ\phi_2 = f_-\,.
\end{equation*}
Since \(f_+\cong g_+\) and \(f_-\cong g_-\), we see that \(f_+\) and \(g_+\) have the same number of critical points, and also that \(f_-\) and \(g_-\) have the same number of critical points. Thus, we only need to consider the following possibilities: (A) Both \(f_+\) and \(g_+\) have only one critical point. (B) Both \(f_-\) and \(g_-\) have only one critical point. 

Suppose that both \(f_+\) and \(g_+\) have only one critical point. We have already seen that \(0\) is a critical point of both \(f_+\) and \(g_+\). Hence, \(0\) is the only critical point of \(f_+\), and also the only critical point of \(g_+\). Recall that \(f_+(0) = a_0\) and \(g_+(0) = b_0\). Since, by hypothesis, \(Y\) is a factor of both \(F\) and \(G\), we have \(a_0=b_0=0\). Therefore, 
\(f_+(0)=g_+(0)=0\). From the proof of Theorem \ref{thm: only one critical point}, it is clear that if \(f,g\from\R\to\R\) are Lipschitz equivalent polynomial functions of degree \(\geq 1\) such that \(f\) has only one critical point \(t_0\) and \(g\) has only one critical point \(s_0\), then given a constant \(c>0\) such that \(g_+(s_0) = c f_+(t_0)\), there exists a bi-Lipschitz homeomorphism 
\(\phi\from\R\to\R\) such that \(g\circ\phi = c f\). In particular, if \(g(s_0) = f(t_0) = 0\), then for any constant \(c > 0\), there exists a bi-Lipschitz homeomorphism \(\phi\from\R\to\R\) such that \(g\circ\phi = c f\). By applying this result to \(f_+\) and \(g_+\), we obtain a bi-Lipschitz homeomorphism \(\tilde\phi_1\from\R\to\R\) such that
\(\abs{\lambda_2}^d g_+\circ\tilde\phi_1 = f_+\). Thus, we have
\((g_+,g_-)\circ(\tilde\lambda,\tilde\phi) = (f_+,f_-)\), where
\(\tilde\lambda = (\lambda_2,\lambda_2)\) and 
\(\tilde\phi = (\tilde\phi_1,\phi_2)\).

Similarly, if both \(f_-\) and \(g_-\) have only one critical point then we can obtain a bi-Lipschitz homeomorphism  \(\tilde\phi_2\from\R\to\R\) such that \(\abs{\lambda_1}^d g_-\circ\tilde\phi_2 = f_-\), whence
\((g_+,g_-)\circ(\tilde\lambda,\tilde\phi) = (f_+,f_-)\), where
\(\tilde\lambda = (\lambda_1,\lambda_1)\) and 
\(\tilde\phi = (\phi_1,\tilde\phi_2)\). This proves the third part of the proposition for \(\lambda > 0\). The proof for \(\lambda < 0\) is analogous.
\end{proof}

\begin{corollary}
\label{cor: r odd, s even}
Let\/ \(F,G\in\R[X,Y]\) be \(\beta\)-quasihomogeneous polynomials of degree 
\(d\) that are not of the form \(cX^d\). Suppose that the height functions \(f_+,f_-\) of\/ \(F\) and the height functions \(g_+,g_-\) of\/ \(G\) can be arranged into pairs of Lipschitz equivalent functions.
If any of the following conditions is satisfied, then\/ \(F\) and\/ \(G\) are 
\({\cal R}\)-semialgebraically Lipschitz equivalent:
\begin{enumerate}[label=\textnormal{(\alph*)}]
\item Neither \(F\) nor \(G\) has\/ \(Y\) as a factor.
\item One of the height functions\/ \(f_+,f_-,g_+,g_-\) has only one critical point.
\end{enumerate}
\end{corollary}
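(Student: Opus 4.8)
The plan is to reduce the statement, in every case, to Theorem~\ref{thm: Sufficient conditions for R-semialg. Lip. equivalence} by exhibiting a zygothety relating the height functions of $F$ and $G$ whose scaling part satisfies one of the hypotheses (a)--(c) of that theorem.

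First I would translate the hypothesis via Corollary~\ref{cor: condition on height functions in terms of group action}: since the height functions $f_+,f_-$ of $F$ and $g_+,g_-$ of $G$ can be arranged into pairs of Lipschitz equivalent functions, there is a zygothety $(\lambda,\phi) = ((\lambda_1,\lambda_2),(\phi_1,\phi_2))$ with $(g_+,g_-)\circ(\lambda,\phi) = (f_+,f_-)$. Writing $\beta = r/s$ with $r > s > 0$ and $\gcd(r,s) = 1$, I would then split according to the parities of $r$ and $s$. If $r$ is even or $s$ is odd, condition (a) of Theorem~\ref{thm: Sufficient conditions for R-semialg. Lip. equivalence} holds for this zygothety and we conclude at once; note that, since $\gcd(r,s) = 1$, the only remaining possibility is that $r$ is odd and $s$ is even, so this is the case on which the rest of the argument focuses.

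Assume $r$ odd and $s$ even. Under condition (a) of the corollary --- neither $F$ nor $G$ has $Y$ as a factor --- Lemma~\ref{lemma: r odd, s even}(ii) gives $\lambda_1 = \lambda_2$, so condition (b) of Theorem~\ref{thm: Sufficient conditions for R-semialg. Lip. equivalence} is satisfied and $F$ and $G$ are ${\cal R}$-semialgebraically Lipschitz equivalent. Under condition (b) of the corollary --- one of $f_+,f_-,g_+,g_-$ has only one critical point --- I would invoke Lemma~\ref{lemma: r odd, s even}(i): either neither of $F,G$ has $Y$ as a factor, in which case the preceding paragraph applies, or $Y$ divides both $F$ and $G$, in which case Lemma~\ref{lemma: r odd, s even}(iii) produces a new zygothety $(\tilde\lambda,\tilde\phi) = ((\tilde\lambda_1,\tilde\lambda_2),(\tilde\phi_1,\tilde\phi_2))$ with $\tilde\lambda_1 = \tilde\lambda_2$ and still $(g_+,g_-)\circ(\tilde\lambda,\tilde\phi) = (f_+,f_-)$; then Theorem~\ref{thm: Sufficient conditions for R-semialg. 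Lip. equivalence}(b) again yields ${\cal R}$-semialgebraic Lipschitz equivalence.

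I do not expect a genuine obstacle here: all the analytic content --- constructing a $\beta$-regular zygothety, forming its inverse $\beta$-transform, and verifying it is a germ of semialgebraic bi-Lipschitz map --- has already been absorbed into Theorem~\ref{thm: beta-regular zygothety, R-semialgebraic Lipschitz equivalence}, Corollary~\ref{cor: Consequences of the characterization of beta-transitions}, Lemma~\ref{lemma: r odd, s even}, and Theorem~\ref{thm: Sufficient conditions for R-semialg. Lip. equivalence}, so the corollary is essentially a bookkeeping argument organized by the parities of $r$ and $s$ and by whether $Y$ divides $F$ and $G$. The one point deserving a moment of care is checking that the case division is exhaustive, in particular that in the $r$ odd, $s$ even situation under condition (b) the dichotomy in Lemma~\ref{lemma: r odd, s even}(i) does route every possibility into either condition (a) or the ``$Y\mid F$ and $Y\mid G$'' case covered by part (iii) of that lemma.
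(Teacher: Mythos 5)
Your proposal is correct and follows essentially the same route as the paper's own proof: pass from the hypothesis to a zygothety via Corollary~\ref{cor: condition on height functions in terms of group action}, dispatch the case \(r\) even or \(s\) odd by Theorem~\ref{thm: Sufficient conditions for R-semialg. Lip. equivalence}(a), and in the remaining case \(r\) odd, \(s\) even use Lemma~\ref{lemma: r odd, s even}(ii) to get \(\lambda_1=\lambda_2\) under condition (a), and Lemma~\ref{lemma: r odd, s even}(i)--(iii) under condition (b) to route every possibility into Theorem~\ref{thm: Sufficient conditions for R-semialg. Lip. equivalence}(b) or the already-settled case (a). The only cosmetic difference is that you dispose of the \(r\) even/\(s\) odd parity up front, while the paper re-invokes it inside each of the two parts of the corollary; you also spell out more explicitly than the paper does that \(\lambda_1=\lambda_2\) is what triggers Theorem~\ref{thm: Sufficient conditions for R-semialg. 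Lip. equivalence}(b) in part (a).
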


\begin{proof}
Let \(\beta = r/s\), where \(r > s > 0\) and \(\gcd(r,s) = 1\).
By hypothesis, the height functions of \(F\) and \(G\) can be arranged into pairs of Lipschitz equivalent functions. Hence, by Corollary \ref{cor: condition on height functions in terms of group action}, there exists a 
zygothety \((\lambda,\phi) = ((\lambda_1,\lambda_2),(\phi_1,\phi_2))\)
such that \((g_+,g_-)\circ(\lambda,\phi) = (f_+,f_-)\).
Suppose that neither \(F\) nor \(G\) has \(Y\) as a factor. If \(r\) is odd and \(s\) is even, then \(\lambda_1 = \lambda_2\) (by Lemma \ref{lemma: r odd, s even}). Otherwise, \(r\) is even or \(s\) is odd, and by Theorem \ref{thm: Sufficient conditions for R-semialg. Lip. equivalence}, \(F\) and \(G\) are \({\cal R}\)-semialgebraically Lipschitz equivalent. This proves part (a) of the corollary. 

Now, we prove part (b). Suppose that one of the height functions \(f_+,f_-,g_+,g_-\) has only one critical point. By Theorem \ref{thm: Sufficient conditions for R-semialg. Lip. equivalence}(a), if \(r\) is even or \(s\) is odd, then \(F\) and \(G\) are \({\cal R}\)-semialgebraically Lipschitz equivalent. Thus, assume that \(r\) is odd and \(s\) is even. By the (already proved) part (a) of this corollary, if neither \(F\) nor \(G\) has \(Y\) as a factor, then \(F\) and \(G\) are \({\cal R}\)-semialgebraically Lipschitz equivalent. So, we add to our assumptions that \(Y\) is a factor of both \(F\) and \(G\). (Since we are assuming that \(r\) is odd and \(s\) is even, Lemma \ref{lemma: r odd, s even}(i) guarantees that either both \(F\) and \(G\) have \(Y\) as a factor, or neither does.) Then, by Lemma \ref{lemma: r odd, s even}(iii), there exists a zygothety \((\tilde\lambda,\tilde\phi)=
((\tilde\lambda_1,\tilde\lambda_2),(\tilde\phi_1,\tilde\phi_2))\),
with \(\tilde\lambda_1=\tilde\lambda_2\), such that
\((g_+,g_-)\circ(\tilde\lambda,\tilde\phi)=(f_+,f_-)\) and, by Theorem \ref{thm: Sufficient conditions for R-semialg. Lip. equivalence}(b), \(F\) and \(G\) are \({\cal R}\)-semialgebraically Lipschitz equivalent.
\end{proof}

\subsection{Henry and Parusi\'{n}ski's example revisited}
\label{subsection: Henry and Parusinski's example}

In \cite{HP2}, Henry and Parusi\'{n}ski constructed an invariant of the bi-Lipschitz equivalence of analytic function germs 
\((\R^n,0)\to(\R,0)\) that varies continuously in many analytic families, thus showing that the bi-Lipschitz equivalence of analytic function germs admits continuous moduli. As an example, they show that the one-parameter family of germs \(f_t(x,y)\from (\R^2,0)\to(\R,0)\), given by
\begin{equation}
\label{eq: Henry-Parusinski's example}
f_t(x,y) = x^3 - 3txy^4 + y^6,\quad t\in\R
\end{equation}
satisfies the following properties:
\begin{enumerate}[label=\roman*.]
\item Given \(t,t^\prime > 0\), if \(t\neq t^\prime\) then there exists no germ of bi-Lipschitz homeomorphism \(h\from (\R^2,0)\to(\R^2,0)\) such that
\(f_t\circ h = f_{t^\prime}\).

\item Given \(t,t^\prime < 0\), there exists a germ of bi-Lipschitz homeomorphism \(h\from (\R^2,0)\to(\R^2,0)\) such that
\(f_t\circ h = f_{t^\prime}\).
\end{enumerate}
Note that property (i) shows, in particular, that the bi-Lipschitz classification of real analytic function germs admits continuous moduli.

As an application of the results obtained in this paper, we analyze this example in the context of \({\cal R}\)-semialgebraic Lipschitz equivalence. But before we do that, we need to make a small adjustment. For each \(t\in\R\), we have \(f_t(\lambda^2 x,\lambda y) = \lambda^6f_t(x,y)\). Hence, \(f_t(x,y)\) is quasihomogeneous with weights \(2\) and \(1\). However, we established in Section \ref{section: intro} that we would work with quasihomogeneous polynomials \(F(X,Y)\) in which \(Y\) has a greater weight than \(X\). So, in order to apply our results on \({\cal R}\)-semialgebraic Lipschitz equivalence we work instead with the family
\begin{equation}
\label{eq: Henry-Parusinski's example modified}
F_\lambda (X,Y) = X^6 - 3\lambda X^4Y + Y^3,\quad \lambda\in\R,
\end{equation}
obtained by swapping \(x\) and \(y\) in the quasihomogeneous polynomials from the original family to make the weight of \(y\) greater than the weight of \(x\). Thus, we are working now with a family of \(\beta\)-quasihomogeneous polynomials of degree \(6\), with \(\beta = 2\). We show that this family satisfies the following properties:\footnote{Since the map \(\R^2 \to \R^2\) given by \((X,Y)\mapsto (Y,X)\) is a semialgebraic bi-Lipschitz homeomorphism,  these properties are also satisfied by the family (\ref{eq: Henry-Parusinski's example}).}
\begin{enumerate}[label=\roman*'.]
\item Given \(\lambda,\mu > 0\), if \(\lambda\neq\mu\) then there exists no germ of semialgebraic bi-Lipschitz homeomorphism \(\Phi\from (\R^2,0)\to(\R^2,0)\) such that \(F_\mu\circ \Phi = F_\lambda\).

\item Given \(\lambda,\mu < 0\), there exists a germ of semialgebraic bi-Lipschitz homeomorphism \(\Phi\from (\R^2,0)\to(\R^2,0)\) such that
\(F_\mu\circ \Phi = F_\lambda\).
\end{enumerate}

Note that the height functions of \(F_\lambda\) are given by
\((f_\lambda)_+(t) = 1 - 3\lambda t + t^3\) and \((f_\lambda)_-(t) = 1 - 3\lambda t + t^3\). So, we drop the subscript sign and write simply 
\(f_\lambda(t) = t^3 - 3\lambda t + 1\). 
Now, to prove (i'), fix any two real numbers \(\lambda,\mu > 0\); we show that if \(F_\lambda\) and \(F_\mu\) are \({\cal R}\)-semialgebraically Lipschitz equivalent then \(\lambda = \mu\). We proceed in two steps. First, we show that if \(F_\lambda\) and \(F_\mu\) are 
\({\cal R}\)-semialgebraically Lipschitz equivalent then 
\(f_\lambda\cong f_\mu\). Second, we show that if \(f_\lambda\cong f_\mu\) then \(\lambda = \mu\).

For the first step, note that  \(f_\lambda\) has at least one real zero \(t_0\) and that \(X\) is not a factor of \(F_\mu\).
Hence, by Theorem \ref{thm: equivalent polynomials, Lipschitz equivalent height functions}, the \({\cal R}\)-semialgebraic equivalence of \(F_\lambda\) and \(F_\mu\) implies that \(f_\lambda\cong f_\mu\). 

For the second step, note that \(f_\lambda\) has exactly two distinct critical points, \(t_1 = -\lambda^{1/2}\) and \(t_2 = \lambda^{1/2}\), and  that its multiplicity symbol is \(((1+2\lambda^{3/2},1-2\lambda^{3/2}),(2,2))\). Also, \(f_\mu\) has exactly two distinct critical points \(s_1 = -\mu^{1/2}\) and \(s_2 = \mu^{1/2}\) and its multiplicity symbol is \(((1+2\mu^{3/2},1-2\mu^{3/2}),(2,2))\).
Now, suppose that \(f_\lambda\cong f_\mu\). By Theorem \ref{thm: at least 2 critical points}, the multiplicity symbols of \(f_\lambda\) and \(f_\mu\) are similar. More precisely, since \(1+2\lambda^{3/2} > 1-2\lambda^{3/2}\) and
\(1+2\mu^{3/2} > 1-2\mu^{3/2}\), the multiplicity symbols of \(f_\lambda\) and \(f_\mu\) are directly similar, and hence
\begin{equation*}
\left|
\begin{array}{ll}
1+2\lambda^{3/2} & 1-2\lambda^{3/2}\\
1+2\mu^{3/2} & 1-2\mu^{3/2}
\end{array}
\right|
= 0\,.
\end{equation*}
A straightforward verification shows that this equality holds if and only if \(\lambda = \mu\), so we conclude that 
if \(f_\lambda\cong f_\mu\) then \(\lambda = \mu\).

To prove (ii'), fix any two real numbers \(\lambda,\mu < 0\).
In this case, both \(f_\lambda\) and \(f_\mu\) have no critical points.
Hence, by Corollary \ref{cor: Sufficient conditions for R-semialg. Lip. equivalence}, \(F_\lambda\) and \(F_\mu\) are 
\({\cal R}\)-semialgebraically Lipschitz equivalent.

Finally, we note that property (i') shows, in particular, that the 
\({\cal R}\)-semialgebraic Lipschitz equivalence of 
real \(\beta\)-quasihomogeneous polynomials in two variables admits continuous moduli.\\

%------
% Insert acknowledgments and information
% regarding funding at the end of the last
% section, i.e., right before the bibliography.
%------

\noindent
{\bf Acknowledgements.} I would like to thank Alexandre Fernandes, Vincent Grandjean and Maria Michalska for their guidance and support in completing this research.\\ 

\noindent
{\bf Funding.} This work was financially supported by Funcap (Fundação Cearense de Apoio ao Desenvolvimento Científico e Tecnológico) and UESC (Universidade Estadual de Santa Cruz).

\bibliographystyle{amsplain}
\bibliography{semialg_lip_equiv_of_polyn_funct}

%\begin{thebibliography}{99}

%------ Example for a paper in journal:
% \bibitem{article1}
% Wiles, A.: Modular elliptic curves and Fermat's last theorem.
% \emph{Ann. of Math. (2)} \textbf{141} (1995), no.~3, 443--551.

%------ Example for a book:
% \bibitem{book1}
% Ziemer, W.~P.: \emph{Weakly differentiable functions}. Grad. Texts in Math.
% 120, Springer, New York, 1989.

%------ Example for a paper in a book:
% \bibitem{incollection1}
% Milne, J.~S.: Introduction to Shimura varieties. In \emph{Harmonic analysis, the
% trace formula, and Shimura varieties}, edited by M.~W. Marcellin and E.~Giorgi,
% pp. 265--378, Clay Math. Proc. 4, Amer. Math. Soc., Providence, RI, 2005.

%------ Example for a preprint on arXiv:
% \bibitem{preprint1}
% Nguyen, D.~V., Chilappagari, S.~K., Marcellin, M.~W. and Vasic, B.:
% LDPC codes from latin squares free of small trapping sets, preprint 2010,
% \href{http://arxiv.org/abs/1008.4177}{arXiv:1008.4177}.

%------ Example for a report:
% \bibitem{report}
% Schöberl, J.: Commuting quasi-interpolation operators.
% Technical report isc-01-10-math, Texas A\&M University, 2001,
% \url{www.isc.tamu.edu/publications-reports/tr/0110.pdf}.

%------ Example for a thesis:
% \bibitem{thesis1}
% Giorgi, E.: \emph{The geometric universe}. Ph.D. thesis,
% University of Maryland, College Park, 2002.

%\end{thebibliography}

\vspace{10pt}
\noindent
{\bf Sergio Alvarez A. Correia}\\
Departamento de Ciências Exatas,\\
Universidade Estadual de Santa Cruz,\\
Ilhéus-Bahia, Brazil;\\
salvarez@uesc.br 

\end{document}